\documentclass[11pt]{article}
\usepackage{amsmath, latexsym, amsfonts, amssymb, amsthm, amscd}
\usepackage[left=2cm, right=2cm, top=2.5cm, bottom=2.5cm]{geometry}
\usepackage{upquote}
\usepackage{color}
\usepackage{setspace}
\onehalfspacing
\usepackage[labelfont=bf]{caption}
\usepackage{stmaryrd}

\usepackage[utf8]{inputenc}
\usepackage[T1]{fontenc}
\usepackage[english]{babel}
\usepackage{dsfont}
\usepackage{mathtools} 
\usepackage{bm}

\usepackage{xcolor}
\definecolor{Maroon}{HTML}{ad2231}
\definecolor{webgreen}{HTML}{008000}

\usepackage{hyperref}
\hypersetup{colorlinks, breaklinks, urlcolor=Maroon, linkcolor=Maroon, citecolor=webgreen} 

\allowdisplaybreaks

\usepackage{bookmark}

\makeatletter

\makeatother

\newtheorem{corollary}{Corollary}
\newtheorem{proposition}{Proposition}
\newtheorem{lemma}{Lemma}

\newtheorem{theorem}{Theorem}

\theoremstyle{definition}
\newtheorem{Condition}{Condition}
\newtheorem*{general*}{A general remark}

\begin{document}
\title{The asymptotic distribution of cluster sizes for supercritical percolation
on random split trees}
\author{Gabriel Berzunza\footnote{ E-mail:
        \href{mailto:gabriel.berzunza-ojeda@math.uu.se}{gabriel.berzunza-ojeda@math.uu.se}},\,\,
    and
    Cecilia Holmgren\footnote{ E-mail: \href{mailto:cecilia.holmgren@math.uu.se}{cecilia.holmgren@math.uu.se}} \\ \vspace*{10mm}
{\small Department of Mathematics, Uppsala University, Sweden} }
\maketitle

\vspace{0.1in}

\begin{abstract} 
We consider the model of random trees introduced by Devroye (1999), the so-called random split trees. The model encompasses many important
randomized algorithms and data structures. We then perform supercritical Bernoulli bond-percolation on those trees and obtain a precise weak limit theorem for the sizes of the largest clusters. We also show that the approach developed in this work may be useful for studying percolation on other classes of trees with logarithmic height, for instance, we also study the case of $d$-regular trees.
\end{abstract}

\noindent {\sc Key words and phrases}: cluster size; Poisson processes, random trees; split trees; supercritical bond-percolation.

\noindent {\sc MSC 2020 Subject Classifications}: 60C05; 60F05; 60K35; 68P05; 05C05; 05C80.

\section{Introduction}

We investigate the asymptotic behaviour of the sizes of the largest clusters created by performing Bernoulli bond-percolation on random split trees. Split trees were first introduced by Devroye \cite{Luc1999} to encompass many families of trees that are frequently used to model efficient data structures or sorting algorithms (we will be more precise shortly). Important examples of split trees are binary search trees \cite{Hoa1962}, $m$-ary search trees \cite{Pyke1965}, quad trees \cite{finkel1974}, median-of-$(2k+1)$ trees \cite{Walker1976}, fringe-balanced trees \cite{Luc1993}, digital search trees \cite{Coffman1970} and random simplex trees \cite[Example 5]{Luc1999}. 

To be more precise, we consider trees $T_{n}$ of large but finite size $n \in \mathbb{N} \coloneqq \{1, 2, \dots\}$ and perform Bernoulli bond-percolation with parameter $p_{n} \in [0,1]$ that depends on the size of the tree (i.e., one removes each edge in $T_{n}$ with probability $1 - p_{n}$, independently of the other edges, inducing a partition of the set of vertices into connected clusters). In particular, we are going to be interested in the supercritical regime, in the sense that with high probability, there exists a giant cluster, that is of a size comparable to that of the entire tree. 

Bertoin \cite{Be1} established a simple characterization of tree families with $n$ vertices and percolation regimes which results in giant clusters. Roughly speaking, Bertoin \cite{Be1} showed that the supercritical regime corresponds to percolation parameters of the form $1-p_{n} = c/\ell(n) + o(1/\ell(n))$ as $n \rightarrow \infty$, where $c > 0$ is fixed and $\ell(n)$ is an approximation of the height of a typical vertex in the tree structure\footnote{For two sequences of real numbers $(A_{n})_{n \geq 1}$ and $(B_{n})_{n \geq 1}$ such that $B_{n} > 0$, we write $A_{n} = o(B_{n})$ if $\lim_{n \rightarrow \infty} A_{n}/B_{n} = 0$.}. Then the size $\Gamma_{n}$ of the cluster containing the
root satisfies $\lim_{n \rightarrow \infty} n^{-1} \Gamma_{n} = \Gamma(c)$ in distribution for some random variable satisfying $\mathbb{P}( \Gamma(c) = 0) < 1$.
In several examples the supercritical percolation parameter satisfies
\begin{eqnarray} \label{eq1}
p_{n} = 1 - c/\ln n + o\left( 1/\ln n \right),
\end{eqnarray}

\noindent for some fixed parameter $c >0$. For example, this happens for some important families of random trees with logarithmic height, such as random recursive trees, preferential attachment trees, binary search trees; see \cite{Drmota20092}, \cite[Section 4.4]{Durrett2010}. In those cases the random variable $\Gamma(c)$ is an (explicit) constant and the giant cluster is unique. 

A natural problem in this setting is then to estimate the sizes of the next largest clusters.
Concerning trees with logarithmic height, Bertoin \cite{Be3} proved that in the supercritical regime, the sizes of the next largest clusters of a uniform random recursive tree, normalized by a factor $\ln n /n$, converge to the atoms of some Poisson process; see also \cite{Baur2016}. This result was extended by Bertoin and Bravo \cite{Uribe2015} to preferential attachment trees. A different example is the uniform Cayley trees where $\ell(n) = \sqrt{n}$ and $\Gamma(c)$ is not constant. But unlike the previous examples, the number of giant components is unbounded as $n \rightarrow \infty$; see \cite{Pitman1999} and \cite{Pitman2006}. 

As a motivation, it is important to point out that supercritical Bernoulli bond-percolation on large but finite connected graphs is an ongoing subject of research in statistical physics and mathematics. Furthermore, the estimation of the sizes of the next largest clusters is a relevant question in this setting. An important example where the graph is not a tree is the case of a complete graph with $n$ vertices. A famous result due to Erd\"os and R\'enyi (see \cite{Bollo2001}) shows that Bernoulli bond-percolation with parameter $p_{n} = c/n + o(1/n)$ for $c >1$ fixed, produces with high probability as $n \rightarrow \infty$, a unique giant cluster of size close to $\theta(c)n$, where $\theta(c)$ is the unique solution to the equation $x + e^{-cx} =1$, while the second, third, etc.\ largest clusters have only sizes of order $\ln n$ (note that bond-percolation with parameter $p_{n}$ in the complete graph corresponds to the well-known binomial random graph $G(n, p_{n})$.) 

The main purpose of this work is to investigate the case of random split trees which belong to the family of random trees with logarithmic heights; see Devroye \cite{Luc1999}. Informally speaking, a random split tree $T_{n}^{{\rm sp}}$ of ``size'' (or cardinality) $n$ is constructed
by first distributing $n$ balls (or keys) among the vertices of an infinite $b$-ary tree ($b \in \mathbb{N}$) and then removing all sub-trees without balls. Each vertex in the infinite $b$-ary tree is given a random non-negative split vector $\mathcal{V} = (V_{1}, \dots, V_{b})$ such that $\sum_{i=1}^{b}V_{i} =1$ and $V_{i} \geq 0$, are drawn independently from the same distribution. These vectors affect how balls are distributed. Its exact definition is somewhat lengthy and we postpone it to  Section \ref{sec1}. An important peculiarity is that the number $N^{(n)}$ of vertices of $T_{n}^{{\rm sp}}$ is often random which makes the study of split trees usually challenging. 

Recently, we have shown in \cite[Lemma 2 and Lemma 3]{Ber2019} that the supercritical percolation regime in split trees of cardinality $n$ corresponds precisely to parameters fulfilling (\ref{eq1}). Note that here $n$ corresponds to the number of balls (or keys) and not to the number of vertices.  More precisely, let $C^{0}_{n}$ (resp.\ $\hat{C}^{0}_{n}$) be the number of balls (resp.\ number of vertices) in the percolation cluster that contains the root. Then, in the regime (\ref{eq1}) and under some mild conditions on the split tree (see Section \ref{sec1}), it holds that
\begin{eqnarray}
n^{-1} C^{0}_{n} \overset{\mathbb{P}}{\longrightarrow} e^{-c/\mu} \hspace*{6mm} \text{\Big(resp.} \hspace*{2mm} n^{-1} \hat{C}^{0}_{n} \overset{\mathbb{P}}{\longrightarrow} \alpha e^{-c/\mu} \Big), \hspace*{5mm} \text{as} \hspace*{2mm} n \rightarrow \infty,
\end{eqnarray}

\noindent where $\mu = b\mathbb{E}[-V_{1}\ln V_{1}]$ ($\alpha >0$ is some constant depending on the split tree) and $\overset{\mathbb{P}}{\longrightarrow}$ denotes convergence in probability. Furthermore, the giant cluster is unique. Indeed, the above results agree with \cite[Theorem 1]{Be1} even when the number of vertices in split trees is random and the cluster sizes can be defined as either the number of balls or the number of vertices. 

In this work, we extend these results and show that in the supercritical regime (\ref{eq1}) the next largest clusters of a split tree $T_{n}^{{\rm sp}}$ have a size of order $n/\ln n$. Moreover, we obtain a limit theorem in terms of a certain Poisson process. More precise statements will be given in Theorems \ref{Teo1}, \ref{Teo2} and \ref{Teo4} below. These results exhibit that cluster sizes, in the supercritical regime, of split-trees, uniform recursive trees and preferential attachment trees have similar asymptotic behaviour. Finally, we show that our present approach also applies to study the size of the largest clusters for percolation on complete regular trees (see Theorem \ref{Teo3}).

The approach developed in this work differs from that used to study the cases of uniform random recursive trees (RRT) in \cite{Be3} and preferential attachment trees in \cite{Uribe2015}. The method of \cite{Be3} is based on a coupling of Iksanov and M\"ohle \cite{Iksanov2007}  connecting the Meir and Moon \cite{Meir1970} algorithm for the isolation of
the root in an RRT and a certain random walk. This makes use of special properties of recursive trees (the so-called randomness preserving property, i.e., if one removes an edge from an RRT, then the two resulting subtrees, conditionally on their sizes, are independent RRT’s) which fail for split-trees. The basic idea of \cite{Uribe2015} is based on the close relation of preferential attachment trees with Markovian branching processes and the dynamical incorporation of percolation as neutral
mutations. The recent work of Berzunza \cite{Berzunza2018}
shows that one can also relate percolation on some types of split trees (but not all) with general age-dependent branching processes (or Crump-Mode-Jagers processes) with neutral mutations. However, the lack of the Markov property in those general branching processes makes the idea of \cite{Uribe2015} difficult to implement.

A common feature in these previous works, namely \cite{Be3} and \cite{Uribe2015}, is that, even though one addressed a static problem, one can consider a dynamical version in which edges are removed, respectively vertices are inserted, one after the other in a certain order as time passes. Here we use a fairly different route and view percolation on split trees as a static problem.

We next introduce formally the family of random split trees and relevant background, which will enable us to state our main results in Section \ref{sec2}.

\subsection{Random split trees} \label{sec1}

In this section, we introduce the split tree generating algorithm with parameters $b, s, s_{0}, s_{1}, \mathcal{V}$ and $n$ introduced by Devroye \cite{Luc1999}. Some of the parameters are the branch factor $b \in \mathbb{N}$, the vertex capacity $s \in \mathbb{N}$, and the number of balls (or cardinality) $n \in \mathbb{N}$. The additional integers $s_{0}$ and $s_{1}$ are needed to describe the ball distribution process. They satisfy the inequalities
\begin{eqnarray*} 
0 < s, \hspace*{3mm} 0 \leq s_{0} \leq s, \hspace*{3mm} 0 \leq b s_{1} \leq s + 1 - s_{0}.
\end{eqnarray*}

\noindent The so-called random split
vector $\mathcal{V} = (V_{1}, \dots, V_{b})$ is a random non-negative vector with $\sum_{i=1}^{b} V_{i} = 1$ and $V_{i} \geq 0$, for $i=1, \dots, b$. 

Consider an infinite rooted $b$-ary tree $\mathbb{T}$, i.e., every vertex has $b$ children. We view each vertex of $\mathbb{T}$ as a bucket with capacity $s$ and we assign to each vertex $u \in \mathbb{T}$ an independent copy $\mathcal{V}_{u} = (V_{u,1}, \dots, V_{u,b})$ of the random split vector $\mathcal{V}$.  Let $C(u)$ denote the number of balls in vertex $u$, initially setting $C(u) = 0$ for all $u$. We call $u$ a leaf if $C(u) > 0$ and $C(v) = 0$ for all children $v$ of $u$, and internal if $C(v) > 0$ for some strict descendant $v$ of $u$. The split tree $T_{n}^{{\rm sp}}$ is constructed recursively by distributing $n$ balls one at time to generate a subset of vertices of $\mathbb{T}$. The balls are labeled using the set $\{1, 2, \dots, n\}$ in the order of insertion. The $j$-th ball is added by the following procedure.
 \begin{enumerate}
 \item Insert $j$ to the root.
 
 \item While $j$ is at an internal vertex $u \in \mathbb{T}$, choose child $i$ with probability $V_{u,i}$ and move $j$ to child $i$.
 
 \item If $j$ is at a leaf $u$ with $C(u) < s$, then $j$ stays at $u$ and $C(u)$ increases by $1$.
 
If $j$ is at a leaf with $C(u) = s$, then the balls at $u$ are distributed among $u$ and its children as
follows. We select $s_{0} \leq s$ of the balls uniformly at random to stay at $u$. Among the remaining
$s + 1 - s_{0}$ balls, we uniformly at random distribute $s_{1}$ balls to each of the $b$ children of $u$.
Each of the remaining $s + 1 - s_{0} - bs_{1}$ balls is placed at a child vertex chosen independently at
random according to the split vector assigned to $u$. This splitting process is repeated for any
child which receives more than $s$ balls.
\end{enumerate}

We stop once all $n$ balls have been placed in $\mathbb{T}$ and we obtain $T_{n}^{{\rm sp}}$ by deleting all vertices $u \in \mathbb{T}$ such that the sub-tree rooted at $u$ contains no balls. Note that an internal vertex of $T_{n}^{{\rm sp}}$ contains exactly $s_{0}$ balls, while a leaf contains a random amount in $\{1, . . . , s\}$. Note also that in general the number $N^{(n)}$ of vertices of $T_{n}^{{\rm sp}}$ is a random variable while the number of balls $n$ is deterministic. 

Depending on the choice of the parameters $b, s, s_{0}, s_{1}$ and the distribution of $\mathcal{V}$, several important data structures may be modeled. For instance, binary search trees correspond to $b=2$, $s = s_{0} = 1$, $s_{1} = 0$ and $\mathcal{V}$ distributed as $(U, 1-U)$, where $U$ is a uniform random variable on $[0,1]$ (in this case $N^{(n)}=n$). Some other relevant (and more complicated) examples of split trees are $m$-ary search trees, median-of-$(2k+1)$ trees, quad trees, simplex trees; see \cite{Luc1999}, \cite{Holm2012} and \cite{Bro2012} for details and more examples.

In the present work, we assume without loss of generality that the components of the split vector $\mathcal{V}$ are identically distributed (even exchangeable); this can be done by using random permutations as explained in  \cite{Luc1999}. In particular, $\mathbb{E}[V_{1}] = 1/b$. We frequently use the following notation. Set
\begin{eqnarray} \label{eq56}
\mu \coloneqq b \mathbb{E}[-V_{1} \ln V_{1}]. 
\end{eqnarray}

\noindent Note that $\mu \in (0, \ln b)$ whenever $b \geq 2$. This quantity was first introduced by Devroye \cite{Luc1999} to study the depth of the last inserted ball of $T_{n}^{{\rm sp}}$ as the number of balls increases.

In the study of split trees, the following condition is often assumed:

\theoremstyle{Condition} 
\begin{Condition} \label{Cond1}
Assume that $\mathbb{P}(V_{1} = 1) = \mathbb{P}(V_{1} = 0) = 0$ and that $-\ln V_{1}$ is non-lattice, that is, there is no $a \in \mathbb{R}$ such that $-\ln V_{1} \in a \mathbb{Z}$ almost surely.
\end{Condition}

We sometimes also consider the following condition: 

\theoremstyle{Condition} 
\begin{Condition} \label{Cond2}
Suppose that, for some $\alpha > 0$ and $\varepsilon > 0$, 
\begin{eqnarray*}
\mathbb{E}[N^{(n)}] = \alpha n + O\left(\frac{n}{\ln^{1+\varepsilon}n} \right).
\end{eqnarray*}  
\end{Condition}

\noindent Recall that for two sequences of real numbers $(A_{n})_{n \geq 1}$ and $(B_{n})_{n \geq 1}$ such that $B_{n} >0$, one writes $A_{n} = O(B_{n})$ if $\sup_{n \geq 1} |A_{n}|/B_{n} < \infty$. Condition \ref{Cond2} first appears in \cite[Equation (52)]{Bro2012} for the study of the total path length of split trees. 

Holmgren \cite[Theorem 1.1]{Holm2012} showed that if $\ln V_{1}$ is non-lattice then there exists a constant $\alpha >0$ such that $\mathbb{E}[N^{(n)}] = \alpha n + o(n)$ and furthermore $Var(N^{(n)}) = o(n^{2})$. However, for technical reasons, the proof of Theorem \ref{Teo2} below requires the extra control on $\mathbb{E}[N^{(n)}]$ given in Condition \ref{Cond2}. We do not know whether Condition \ref{Cond2} is really necessary, and probably Theorem \ref{Teo2} still holds without such condition. We leave this as an open problem.

On the other hand, Condition \ref{Cond2} is satisfied in many interesting cases. For instance, it holds for $m$-ary search trees \cite{Mah1989}. Moreover, Flajolet et al.\ \cite{Flajolet2010} showed that for most tries (as long as $\ln V_{1}$ is non-lattice) Condition \ref{Cond2} holds. However, there are some special cases of random split trees that do not satisfy Condition \ref{Cond2}. For instance, tries (where $s=1$ and $s_{0} = 0$) with a fixed split vector $(1/b, \dots, 1/b)$, in which case $\ln V_{1}$ is lattice.

\subsection{Main results} \label{sec2}

In this section, we present the main results of this work. We consider Bernoulli bond-percolation with supercritical parameter $p_{n}$ satisfying (\ref{eq1}) on $T_{n}^{\rm sp}$. We denote by $C_{0}^{(n)}$ (resp.\ $\hat{C}_{0}^{(n)}$) the number of balls (resp.\ the number of vertices) of the cluster that contains the root and by $C_{1}^{(n)} \geq C_{2}^{(n)} \geq \cdots$ (resp.\ $\hat{C}_{1}^{(n)}\geq \hat{C}_{2}^{(n)} \geq \cdots$) the sequence of the number of balls (resp.\ the number of vertices) of the remaining clusters ranked in decreasing order. 



The first result corresponds to the size being defined as the number of balls in the cluster. We write $\overset{d}{\longrightarrow}$ to denote convergence in distribution.

\begin{theorem} \label{Teo1}
Let $T_{n}^{\rm sp}$ be a split tree that satisfies Condition \ref{Cond1} and suppose that $p_{n}$ fulfills (\ref{eq1}). Then, 
\begin{eqnarray*}
n^{-1} C_{0}^{(n)} \overset{\mathbb{P}}{\longrightarrow} e^{-c/\mu}, \hspace*{5mm} \text{as} \hspace*{2mm} n \rightarrow \infty,
\end{eqnarray*}

\noindent where the constants $c$ and $\mu$ are defined in (\ref{eq1}) and (\ref{eq56}), respectively. Furthermore, for every fixed $i \in \mathbb{N}$, 
\begin{eqnarray*}
\left( \frac{\ln n}{n} C_{1}^{(n)}, \dots, \frac{\ln n}{n} C_{i}^{(n)} \right) \overset{d}{\longrightarrow} ({\rm x}_{1}, \dots, {\rm x}_{i}), \hspace*{5mm} as \hspace*{2mm} n \rightarrow \infty,
\end{eqnarray*}

\noindent where ${\rm x}_{1} > {\rm x}_{2} > \cdots$ are the atoms of a Poisson process on $(0, \infty)$ with intensity $c\mu^{-1}e^{-c/\mu}x^{-2} {\rm d}x$.
\end{theorem}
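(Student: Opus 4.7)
The first assertion $n^{-1} C_0^{(n)} \overset{\mathbb{P}}{\longrightarrow} e^{-c/\mu}$ is already the content of \cite[Lemma 2]{Ber2019}, recalled in the introduction, so the task is the joint convergence of $(\ln n/n)(C_1^{(n)}, \ldots, C_i^{(n)})$. The plan is to recast this as point-process convergence. For each non-root vertex $v$ of $T_n^{\rm sp}$, let $A_v = \mathbf{1}\{\text{the edge joining } v \text{ to its parent is cut}\}$, and let $\tilde{C}_v$ be the number of balls in the cluster containing $v$ obtained by bond-percolation at parameter $p_n$ of the subtree $T_v$ rooted at $v$. The non-root clusters of the percolated $T_n^{\rm sp}$ are exactly $\{\tilde{C}_v : A_v = 1\}$, so the second part of the theorem is equivalent to the vague convergence on $(0,\infty)$ of
\[
\Xi_n := \sum_{v \neq o} A_v\, \delta_{(\ln n/n)\tilde{C}_v}
\]
to a Poisson point process with intensity $c \mu^{-1} e^{-c/\mu} x^{-2}\,\mathrm{d}x$.

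The first technical step is a cluster-size approximation. Writing $|T_v|$ for the number of balls in $T_v$, the recursive structure of split trees (conditionally on $|T_v|$, the subtree $T_v$ is, up to a bounded initial condition at $v$, again a split tree with the same parameters $b, s, s_0, s_1, \mathcal{V}$) together with Devroye's depth asymptotics \cite{Luc1999} shows that a typical ball in $T_v$ sits at depth $\mu^{-1}\ln|T_v|(1+o_{\mathbb{P}}(1))$ from $v$. Combined with $1-p_n = c/\ln n + o(1/\ln n)$ this yields
\[
\tilde{C}_v = |T_v|\, e^{-c/\mu}(1 + o_{\mathbb{P}}(1)),
\]
uniformly over $v$ with $|T_v| \geq n/(\ln n)^K$, for any fixed $K$. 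Vertices with smaller subtrees cannot produce clusters of order $n/\ln n$ since $\tilde{C}_v \leq |T_v|$. Thus $\Xi_n$ coincides in the vague topology, up to a vanishing error, with $\widetilde{\Xi}_n := \sum_{v} A_v\, \delta_{(\ln n/n)\,e^{-c/\mu}\,|T_v|}$.

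The heart of the proof is then a profile estimate on the subtree sizes. Set $\mathcal{N}_n(y) := |\{v \in T_n^{\rm sp} : |T_v| \geq yn/\ln n\}|$. The claim is that, for every fixed $y>0$,
\[
\mathbb{E}[\mathcal{N}_n(y)] = \frac{\ln n}{\mu y}\bigl(1 + o(1)\bigr) \quad\text{and}\quad \mathrm{Var}(\mathcal{N}_n(y)) = o\bigl((\ln n)^2\bigr).
\]
For the mean, I would apply the many-to-one formula to the branching random walk $(-\ln W_v)_{v \in \mathbb{T}}$ with $W_v = \prod_{u \prec v} V_{u,i_u}$: the tilted increment $X^* = -\ln V^*$, with $\mathbb{P}(V^* \in \mathrm{d}v) = bv\,\mathbb{P}(V_1 \in \mathrm{d}v)$, has mean $\mu$, and Condition \ref{Cond1} makes Blackwell's renewal theorem applicable, giving
\[
\sum_{v \in \mathbb{T}} \mathbb{P}(W_v \geq y/\ln n) = \sum_{d \geq 0} \mathbb{E}\bigl[e^{S_d}\mathbf{1}\{S_d \leq \ln\ln n - \ln y\}\bigr] \sim \frac{\ln n}{\mu y},
\]
while binomial concentration identifies $\{v \in \mathbb{T} : W_v \geq y/\ln n\}$ with $\{v \in T_n^{\rm sp} : |T_v| \geq yn/\ln n\}$ up to $o(\ln n)$ many vertices. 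The variance follows from a two-spine computation along two independent size-biased balls whose paths coalesce at a random depth. Since the $(A_v)_v$ are independent of $T_n^{\rm sp}$ and of $(|T_v|)_v$, one obtains
\[
\mathbb{E}[\widetilde{\Xi}_n((x,\infty))] = (1-p_n)\,\mathbb{E}[\mathcal{N}_n(xe^{c/\mu})] \longrightarrow \frac{c e^{-c/\mu}}{\mu x},
\]
which is the target intensity on $(x,\infty)$. Kallenberg's criterion (convergence of mean measure plus convergence of second factorial moments, for which the above variance bound suffices) then upgrades this to Poisson convergence of $\widetilde{\Xi}_n$, hence of $\Xi_n$.

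The principal obstacle is the profile estimate on $\mathcal{N}_n(y)$: lacking the exchangeability-based simplifications exploited for random recursive trees and preferential attachment trees in \cite{Be3, Uribe2015}, one must push the renewal-theoretic analysis on the spine through while controlling the boundary effects coming from the bucket capacity $s$ and from the discrepancy between the deterministic product $W_v$ and the binomial-distributed $|T_v|$. A secondary challenge is to make the cluster-size approximation $\tilde{C}_v \approx e^{-c/\mu}|T_v|$ uniform over the $\Theta(\ln n)$ candidate vertices with $|T_v|$ of order $n/\ln n$; this requires a quantitative refinement of Devroye's depth-concentration estimate for split trees of size $n/\ln n$, obtainable by bootstrapping \cite{Luc1999} and exploiting the self-similarity of split trees under subtree extraction.
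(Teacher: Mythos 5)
Your first step and your overall architecture --- (i) recall $n^{-1}C_0^{(n)} \to e^{-c/\mu}$ from \cite[Lemma 2]{Ber2019}, (ii) recast the remaining claim as Poisson convergence of a point process built from cut vertices and their subtree sizes, (iii) prove a profile estimate of the form $\#\{v : |T_v| \geq y n/\ln n\} \sim \mu^{-1}y^{-1}\ln n$ with mean via renewal theory and a variance bound, (iv) transfer from subtree sizes $|T_v|$ to cluster sizes $\tilde{C}_v$ using $\tilde{C}_v \approx e^{-c/\mu}|T_v|$ --- is the same as the paper's. The mean computation via many-to-one/tilting and Blackwell is exactly what the paper does by invoking \cite[Lemma~2.1]{Holm2012} (Equation~(\ref{Ceciresult})), and the binomial concentration bridging $W_v$ and $|T_v|$ is the content of Lemma~\ref{lemma4}. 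Your variance bound via a two-spine computation is a legitimate alternative to the paper's Lemma~\ref{lemma5}, which instead conditions at a fixed low depth $\theta_n = \lfloor c\log_b\ln n\rfloor$ and uses independence of the subtrees below that level; both should work, though the two-spine route is not spelled out here.

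However, there is a genuine gap in step (iv). You claim $\tilde{C}_v = e^{-c/\mu}|T_v|\bigl(1+o_{\mathbb{P}}(1)\bigr)$ \emph{uniformly} over the $\Theta((\ln n)^K)$ vertices with $|T_v|\geq n/(\ln n)^K$, and you need this uniformity to pass from Poisson convergence of $\widetilde{\Xi}_n$ (built from $|T_v|$) to Poisson convergence of $\Xi_n$ (built from $\tilde{C}_v$) via Kallenberg's criterion. The cited result \cite[Lemma 2]{Ber2019} gives only fixed-sequence convergence in probability, not a quantitative rate that would survive a union bound over a polylogarithmically growing family; ``bootstrapping \cite{Luc1999}'' does not by itself supply such a rate. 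The paper avoids this issue entirely: it orders the removed edges by height, takes the first $\ell$ of them (for $\ell$ \emph{fixed}), applies \cite[Lemma 2]{Ber2019} to each of those $\ell$ subtrees jointly --- a finite collection, so no uniformity issue arises (Proposition~\ref{Pro2} and Corollary~\ref{cor2}) --- and then proves separately (Lemma~\ref{lemma2}) that for any fixed $i$, taking $\ell$ large enough ensures the $i$ largest non-root clusters are among these $\ell$ root-clusters with probability tending to $1$. You should replace the uniform-over-all-large-$v$ approximation with this ``finitely many cut edges closest to the root, then a capture lemma'' argument; as written, the Kallenberg step is applied to the wrong process and the transfer to $\Xi_n$ is not justified.
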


Note that $1/ {\rm x}_{1}, 1/{\rm x}_{2}-1/{\rm x}_{1}, 1/{\rm x}_{3}-1/{\rm x}_{2}, \dots$ are i.i.d.\ exponential random variables with parameter $c\mu^{-1}e^{-c/\mu}$.

The second result corresponds to the size being defined as the number of vertices in the cluster.

\begin{theorem} \label{Teo2}
Let $T_{n}^{\rm sp}$ be a split tree that satisfies Conditions \ref{Cond1}-\ref{Cond2} and suppose that $p_{n}$ fulfills (\ref{eq1}). Then, 
\begin{eqnarray*}
n^{-1} \hat{C}_{0}^{(n)} \overset{\mathbb{P}}{\longrightarrow} \alpha e^{-c/\mu}, \hspace*{5mm} \text{as} \hspace*{2mm} n \rightarrow \infty,
\end{eqnarray*}

\noindent where the constants $c$, $\mu$ and $\alpha$ are defined in (\ref{eq1}), (\ref{eq56}) and Condition \ref{Cond2}, respectively. Furthermore, for every fixed $i \in \mathbb{N}$, 
\begin{eqnarray*}
\left( \frac{\ln n}{n} \hat{C}_{1}^{(n)}, \dots, \frac{\ln n}{n} \hat{C}_{i}^{(n)} \right) \overset{d}{\longrightarrow} ({\rm x}_{1}, \dots, {\rm x}_{i}), \hspace*{5mm} as \hspace*{2mm} n \rightarrow \infty,
\end{eqnarray*}

\noindent where ${\rm x}_{1} > {\rm x}_{2} > \cdots$ are the atoms of a Poisson process on $(0, \infty)$ with intensity $ c\alpha\mu^{-1}e^{-c/\mu}x^{-2} {\rm d}x$.
\end{theorem}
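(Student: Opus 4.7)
\medskip

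\noindent\textbf{Proof plan.}

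The plan is to reduce Theorem \ref{Teo2} to Theorem \ref{Teo1} by showing that each macroscopic cluster of $T_n^{\rm sp}$ carries a vertex count asymptotically equal to $\alpha$ times its ball count. The convergence for the root cluster, $n^{-1}\hat{C}_0^{(n)}\overset{\mathbb{P}}{\longrightarrow}\alpha e^{-c/\mu}$, is already established in \cite[Lemma 3]{Ber2019} as recalled in the introduction, so the remaining work concerns the Poisson limit for the next largest clusters.

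The key intermediate claim I would establish is that, for every fixed $i \in \mathbb{N}$,
\begin{equation*}
\frac{\ln n}{n}\bigl(\hat{C}_i^{(n)} - \alpha\, C_i^{(n)}\bigr) \overset{\mathbb{P}}{\longrightarrow} 0 \quad \text{as } n \to \infty.
\end{equation*}
Granted this, Theorem \ref{Teo1} together with Slutsky's theorem yields the desired Poisson limit: the atoms of a Poisson process on $(0,\infty)$ with intensity $c\mu^{-1}e^{-c/\mu}x^{-2}\,{\rm d}x$, when multiplied by the positive constant $\alpha$, form (via the change of variables $y = \alpha x$) a Poisson process with intensity $c\alpha\mu^{-1}e^{-c/\mu}y^{-2}\,{\rm d}y$, which matches the statement of Theorem \ref{Teo2}.

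To prove the intermediate claim, I would exploit the recursive structure of split trees. Each non-root cluster has a well-defined topmost vertex $u$, and conditionally on the number $m$ of balls routed through $u$, the subtree $T_u$ rooted at $u$ in $T_n^{\rm sp}$ is itself a random split tree of cardinality $m$ with the same parameters, by the i.i.d.\ nature of the split vectors $\mathcal{V}_v$ below $u$. Condition \ref{Cond2} then yields $\mathbb{E}[N^{(m)}]=\alpha m + O(m/\ln^{1+\varepsilon}m)$, and the variance bound $\operatorname{Var}(N^{(m)}) = o(m^2)$ from \cite[Theorem 1.1]{Holm2012} gives concentration. The cluster itself is the root cluster of the induced Bernoulli bond-percolation on $T_u$, so the arguments underlying $n^{-1}\hat{C}_0^{(n)}\overset{\mathbb{P}}{\longrightarrow}\alpha e^{-c/\mu}$ at scale $n$ should apply inside $T_u$ at scale $m$, producing the ratio $\alpha$ uniformly in the relevant clusters.

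The main obstacle is controlling errors at the correct scale: the next-largest clusters have $C_i^{(n)}=\Theta(n/\ln n)$, so the deviation between vertex and ball counts must be controlled up to $o(n/\ln n)$, not merely $o(n)$. This is precisely why Condition \ref{Cond2}, rather than Holmgren's weaker $o(n)$ estimate, is required: applied at cluster scale $m \sim n/\ln n$, it delivers an error $O\!\bigl((n/\ln n)\ln^{-(1+\varepsilon)}(n/\ln n)\bigr) = o(n/\ln n)$, exactly what is needed. A secondary difficulty is that there are $\Theta(\ln n)$ clusters at this scale and many more of smaller sizes; rather than treating each cluster in isolation, it should be more efficient to employ the global identities $\sum_{i \geq 0}\hat{C}_i^{(n)} = N^{(n)}$ and $\sum_{i \geq 0}C_i^{(n)} = n$, combined with Condition \ref{Cond2} applied to $N^{(n)}$ and the already-established root-cluster convergence, to handle the aggregate contribution of the small clusters, while the finitely many top $i$ clusters are treated individually via the recursive argument above.
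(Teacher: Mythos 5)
Your approach is correct and rests on the same technical underpinnings as the paper's, but the organization differs in one notable way. The paper does not deduce the vertex-count limit from Theorem~\ref{Teo1}; instead it runs a parallel argument: it introduces $\bar{C}_j$, the number of vertices in the root-cluster of the subtree $T_{j,n}$ rooted just below the $j$-th lowest removed edge, proves Proposition~\ref{Pro3} (the analogue of Proposition~\ref{Pro2}, using \cite[Lemma~3]{Ber2019} at scale $n_{j,n}$ to get $\bar{C}_j/n_{j,n}\overset{\mathbb{P}}{\to}\alpha e^{-c/\mu}$), and then reruns the Corollary~\ref{cor2} / Lemma~\ref{lemma2} pipeline verbatim with $\bar{C}_j$ in place of $\tilde{C}_j$. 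Your proposed reduction — showing $\frac{\ln n}{n}\bigl(\hat{C}_i^{(n)}-\alpha C_i^{(n)}\bigr)\overset{\mathbb{P}}{\to}0$ and invoking Slutsky and the Poisson scaling identity (which you compute correctly) — is logically valid, but it hides one step you would need to make explicit: a priori the ranking of clusters by ball count and the ranking by vertex count need not coincide, so $\hat{C}_i^{(n)}$ and $\alpha C_i^{(n)}$ could refer to different clusters. This is resolved exactly by the Lemma~\ref{lemma2}-type argument, which shows both the top-$i$ ball-count and top-$i$ vertex-count clusters are, with probability tending to one, among the root-clusters of $T_{1,n},\dots,T_{\ell,n}$ for $\ell$ large, and on that event the ratio $\bar{C}_j/\tilde{C}_j\to\alpha$ (from Propositions~\ref{Pro2} and~\ref{Pro3}) makes the rankings agree since the limiting atoms are a.s.\ distinct. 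The paper's direct construction avoids ever comparing the two rankings, which is slightly cleaner. Finally, your global-identity idea ($\sum_j\hat{C}_j^{(n)}=N^{(n)}$, $\sum_j C_j^{(n)}=n$) is not used in the paper and is not needed: the tail clusters with index $j>\ell$ are controlled simply by the fact that $\frac{\ln n}{n}n_{j,n}$ converges to Poisson atoms $1/{\rm y}_j\downarrow 0$, so for $\ell$ large they cannot compete with the top $i$, without any need to track their aggregate vertex count.
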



Note the similarity with the results for uniform random recursive trees in \cite{Be3} and preferential attachment trees in \cite{Uribe2015}. More precisely, the size of the second largest cluster, and more generally, the size of the $i$-th largest cluster (for $i \geq 2$) in the supercritical regime is of order $n/\ln n$ as in \cite{Be3} and \cite{Uribe2015}. Moreover, their sizes are described by the atoms of a Poisson process on $(0, \infty)$ whose intensity measure only differ by a constant factor. For example, for uniform random recursive trees \cite{Be3} the intensity is $ce^{-c}x^{-2} {\rm d}x$.

Recall that Condition \ref{Cond1} requires $- \ln V_{1}$ to be non-lattice. The next result  shows that Theorem \ref{Teo1} can essentially be extended to the  lattice case. Write $y = \lfloor y \rfloor + \{ y \}$ for the decomposition of a real number $y$ as the sum of its integer and fractional parts. For every $\varrho \in [0,1)$ and $a >0$, define the finite measure $\Xi_{\varrho}$ on $(0,\infty)$ by letting
\begin{eqnarray*}
\Xi_{\varrho}([x,\infty)) \coloneqq \frac{a}{1-e^{-a}} e^{a \lfloor \varrho -a^{-1}\ln x  \rfloor -a\varrho},  \hspace*{5mm} x >0.
\end{eqnarray*}

\begin{theorem} \label{Teo4}
Let $T_{n}^{\rm sp}$ be a split tree such that $\mathbb{P}(V_{1} = 1) = \mathbb{P}(V_{1} = 0) = 0$ and that $-\ln V_{1}$ is lattice with span $a>0$, that is, $- \ln V_{1} \in a \mathbb{Z}$ almost surely. Suppose also that $p_{n}$ fulfills (\ref{eq1}). Then, 
\begin{eqnarray*}
n^{-1} C_{0}^{(n)} \overset{\mathbb{P}}{\longrightarrow} e^{-c/\mu}, \hspace*{5mm} \text{as} \hspace*{2mm} n \rightarrow \infty,
\end{eqnarray*}

\noindent where the constants $c$ and $\mu$ are defined in (\ref{eq1}) and (\ref{eq56}), respectively. 

Furthermore, suppose that $n \rightarrow \infty$ such that $\{a^{-1} \ln \ln n\} \rightarrow \varrho \in [0,1)$. Then, for every fixed $i \in \mathbb{N}$, 
\begin{eqnarray*}
\left( \frac{\ln n}{n} C_{1}^{(n)}, \dots, \frac{\ln n}{n} C_{i}^{(n)} \right) \overset{d}{\longrightarrow} ({\rm x}_{1}, \dots, {\rm x}_{i}), \hspace*{5mm} as \hspace*{2mm} n \rightarrow \infty,
\end{eqnarray*}

\noindent where ${\rm x}_{1} \geq {\rm x}_{2} \geq \cdots$ are the atoms of a Poisson process on $(0, \infty)$ with intensity $c\mu^{-1}e^{-c/\mu}\Xi_{\varrho}({\rm d} x)$.
\end{theorem}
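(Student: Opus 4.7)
The first claim $n^{-1}C_0^{(n)} \overset{\mathbb{P}}{\longrightarrow} e^{-c/\mu}$ does not depend on whether $-\ln V_1$ is lattice and has already been established as \cite[Lemma 2]{Ber2019}. The new content is identifying the limit of the ordered non-root cluster sizes.

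My plan is to follow the proof of Theorem \ref{Teo1} almost verbatim, substituting the lattice renewal theorem for Blackwell's at the one step where lattice-versus-non-lattice matters. Index the non-root clusters by the vertices $v$ whose parent edge is cut, an event occurring independently with probability $1-p_n = c/\ln n + o(1/\ln n)$. For each such $v$ at depth $k$, the standard split-tree analysis of \cite{Luc1999} gives
\[
|T_v| \, = \, n \prod_{i=1}^{k} V_i^{(v)} \, (1+o(1))
\]
with high probability, where $V_1^{(v)}, \dots, V_k^{(v)}$ denote the components of the split vectors selected along the root-to-$v$ path. Since the clusters of interest have size of order $n/\ln n$, one can argue (by applying \cite[Lemma 2]{Ber2019} to the conditional split-tree-like structure on $T_v$) that the cluster attached to $v$ is asymptotically $|T_v|\, e^{-c/\mu}(1+o(1))$. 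The Poisson process of normalized cluster sizes is therefore (asymptotically) the image of the Poisson process of normalized subtree sizes under multiplication by $e^{-c/\mu}$.

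The core asymptotic is then the count $\#\{v\in T_n^{\rm sp} : S(v) \leq T\}$, where $S(v) \coloneqq -\sum_{i=1}^{k(v)} \ln V_i^{(v)}$ and $T$ is a log-threshold translating the condition ``scaled cluster size $\geq x$'' into a constraint on $S(v)$. A moment calculation identifies this count as an effective renewal function for the random walk with step distribution that of $-\ln V_1$. In the non-lattice case Blackwell's theorem gives a smooth asymptotic rate and produces the intensity $c\mu^{-1}e^{-c/\mu}x^{-2}\,{\rm d}x$ of Theorem \ref{Teo1}. In the lattice case of span $a$, the renewal measure places mass $a/\mu + o(1)$ at each point of $a\mathbb{Z}$, which via the same dictionary yields the step-function intensity encoded by $\Xi_\varrho$. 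The subsequence condition $\{a^{-1}\ln\ln n\}\to \varrho$ is precisely what aligns the lattice relative to the scale $\ln\ln n$ in the limit, pinning the atoms at the locations $x_k = e^{a(\varrho-k)}$. Poisson convergence of the ordered cluster sizes then follows from a factorial-moment calculation formally identical to the non-lattice case.

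The main obstacle is controlling errors at the scale of the lattice spacing $a$. The approximation $|T_v| \approx n\prod_i V_i^{(v)}$ has multiplicative fluctuations that must be shown to be $o(1)$ on the logarithmic scale, lest the lattice $a\mathbb{Z}$ be smeared out. This requires quantitative depth-concentration estimates for split trees refining those of \cite{Luc1999}, together with uniform error bounds for the lattice renewal theorem across the relevant window of depths $k = \ln n/\mu + O(\ln\ln n)$.
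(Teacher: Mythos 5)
Your proposal takes essentially the same route as the paper: the only substantive change from the proof of Theorem~\ref{Teo1} is that in Lemma~\ref{lemma4} the non-lattice renewal asymptotic (\ref{Ceciresult}) is replaced by its lattice analogue from \cite[Lemma~1]{Ber2019}, giving Proposition~\ref{ExtraPro1}, after which the Theorem~\ref{Teo1} machinery (counting process $N_n$, Corollaries~\ref{cor1}--\ref{cor2}, Lemma~\ref{lemma2}) runs unchanged, and the condition $\{a^{-1}\ln\ln n\}\to\varrho$ enters exactly where you indicate, through $a\lfloor a^{-1}\ln(t\ln n)\rfloor$. The error-control concern you raise is legitimate but does not require sharpened depth-concentration estimates beyond \cite{Luc1999}: the gap between $n_v$ and $n\mathcal{L}_v$ is already controlled in the proof of Lemma~\ref{lemma4} by the Chebyshev argument built on the stochastic sandwich (\ref{eq4}) and the $\delta_1,\delta_2$ truncation (equations (\ref{eq12})--(\ref{eq17})), and this argument transfers to the lattice case for every $t$ at which $\Xi_\varrho^\ast((0,\cdot])$ is continuous, which is all that convergence of finite-dimensional distributions of $N_n$ needs.
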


As we mentioned in the introduction, we shall follow a different route to that used in \cite{Be3} and \cite{Uribe2015}. Our approach is based on a remark made in \cite[Section 3]{Be1} about the behavior of the second largest cluster created by performing (supercritical) Bernoulli bond-percolation on complete regular trees. More precisely, consider a rooted complete regular $d$-ary tree $T_{h}^{\rm d}$ of height $h \in \mathbb{N}$, where $d \geq 2$ is some integer (i.e., each vertex has exactly out-degree $d$). There are $d^{k}$ vertices at distance $k = 0, 1, \dots, h$ from the root and a total of $(d^{h+1}-1)/(d-1)$ vertices. Perform Bernoulli bond-percolation with parameter
\begin{eqnarray}
q_{h} = 1-ch^{-1} + o(h^{-1}), \label{per}
\end{eqnarray}

\noindent where $c > 0$ is some fixed parameter. It has been shown in \cite[Section 3]{Be1}  that this choice of the percolation parameter corresponds precisely to the supercritical regime, that is, the root cluster is the unique giant component. Because the subtree rooted at a vertex at height $i \leq h$ is again a complete regular $d$-ary tree with height $h - i$, \cite[Corollary 1]{Be1} shows that the size (number of vertices) $G_{h}^{1}$ of the largest cluster which does not contain the root is close to
\begin{eqnarray*}
e^{-c}d^{h-\tau_{1}(h)+1}/(d-1),
\end{eqnarray*}

\noindent where $\tau_{1}(h)$ is the smallest height at which an edge has been removed. There are $d(d^{i}-1)/(d-1)$ edges with height at most $i$, so the distribution of $\tau_{1}(h)$ is given by
\begin{eqnarray*}
\mathbb{P}(\tau_{1}(h) > i) = q_{h}^{d(d^{i}-1)/(d-1)}, \hspace*{5mm} i = 1, \dots, h. 
\end{eqnarray*}

\noindent We use the notation $\log_{d} x = \ln x/\ln d$ for the logarithm with base $d$ of $x>0$.
It follows that in the regime (\ref{per}) and as soon as one assumes $\{ \log_{d}h \} \rightarrow \rho \in [0,1)$, as $h \rightarrow \infty$, that 
 $\tau_{1}(h) - \log_{d}h$ converges in distribution, and therefore, $hd^{-h} G_{h}^{1}$ also converges in distribution. 

Our strategy is then to adapt and improve the above argument to study the sizes of the next largest clusters in a random split tree with $n$ balls. We also show that this approach can be used to obtain a similar results
for supercritical percolation on complete $d$-regular trees of height $h \in \mathbb{N}$. More precisely, write $G_{0}^{(h)}$ for the number of vertices of the cluster that contains the root and  $G_{1}^{(h)} \geq G_{2}^{(h)} \geq \cdots$ for the sequence of the number vertices of the remaining clusters ranked in decreasing order.
We introduce for every $\rho \in [0,1)$ the measure $\Lambda_{\rho}$ on $(0,\infty)$ by letting
\begin{eqnarray*}
\Lambda_{\rho}([x,\infty)) \coloneqq d^{-\rho +\lfloor \rho -\log_{d}x \rfloor +1}/(d-1), \hspace*{5mm} x >0. 
\end{eqnarray*}

\begin{theorem} \label{Teo3}
Let $T_{h}^{\rm d}$ be a complete regular $d$-ary tree of height $h \in \mathbb{N}$ such that $\{ \log_{d}h \} \rightarrow \rho \in [0,1)$, as $h \rightarrow \infty$, and suppose that $q_{h}$ fulfills (\ref{per}). Then,
\begin{eqnarray*} 
 d^{-h} G_{0}^{(h)} \overset{\mathbb{P}}{\longrightarrow} de^{- c }/(d-1), \hspace*{5mm}  \text{as} \hspace*{2mm} h \rightarrow \infty, 
\end{eqnarray*}

\noindent where the constant $c$ is defined in (\ref{per}). Furthermore, for every fixed $i \in \mathbb{N}$, 
\begin{eqnarray*}
( hd^{-h} G_{1}^{(h)}, \dots, hd^{-h} G_{i}^{(h)} ) \overset{d}{\longrightarrow}  ({\rm x}_{1}, \dots, {\rm x}_{i}), \hspace*{5mm}  \text{as} \hspace*{2mm} h \rightarrow \infty,
\end{eqnarray*}

\noindent where ${\rm x}_{1} \geq {\rm x}_{2} \geq \cdots$ are the atoms of a Poisson process on $(0, \infty)$ with intensity $c\frac{d}{d-1}e^{-c} \Lambda_{\rho}({\rm d} x)$. 
\end{theorem}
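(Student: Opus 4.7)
The plan is to exploit the self-similarity of the complete $d$-ary tree together with a Poisson approximation for the number of removed edges in a critical depth window around $\lfloor\log_d h\rfloor$.

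\textit{Root cluster.} A vertex at depth $i$ belongs to the root cluster iff all $i$ edges on its path to the root survive, so
\[
\mathbb{E}[G_{0}^{(h)}] \;=\; \sum_{i=0}^{h} d^{i} q_{h}^{i} \;=\; \frac{(dq_{h})^{h+1} - 1}{dq_{h} - 1} \;\sim\; \frac{d^{h+1}e^{-c}}{d-1}.
\]
For concentration I would compute the second moment by splitting pairs $(u,v)$ according to the depth of their least common ancestor $w$. Using $\mathbb{P}(u,v \in G_{0}^{(h)}) = q_{h}^{|u|+|v|-|w|}$ and the telescoping identity
\[
\sum_{(u,v):\; u \wedge v = w} q_{h}^{|u|+|v|} \;=\; S(w)^{2} - \sum_{i=1}^{d} S_{i}(w)^{2},
\]
where $S(w) := \sum_{u \in \mathrm{subtree}(w)} q_{h}^{|u|}$ and $S_{i}(w)$ is the analogous sum over the $i$-th child's subtree, one obtains $\mathbb{E}[(G_{0}^{(h)})^{2}] = \sum_{w} q_{h}^{-|w|}(S(w)^{2} - \sum_{i} S_{i}(w)^{2})$. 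Since $S(w) \sim q_{h}^{|w|} d^{h-|w|+1} e^{-c(h-|w|)/h}/(d-1)$, a direct evaluation gives $\mathbb{E}[(G_{0}^{(h)})^{2}] / \mathbb{E}[G_{0}^{(h)}]^{2} \to 1$, and Chebyshev yields $d^{-h} G_{0}^{(h)} \overset{\mathbb{P}}{\longrightarrow} de^{-c}/(d-1)$.

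\textit{Non-root clusters.} Each non-root cluster $C$ is uniquely indexed by the removed edge $e(C)$ that joins it to the rest of the tree; if the lower endpoint of $e(C)$ has depth $i$, then $C$ is precisely the root cluster of the subtree at that vertex, itself a complete $d$-ary tree of height $h-i$ under Bernoulli$(q_{h})$ percolation. Fix $M \in \mathbb{N}$ and set $I_{M} := [\lfloor \log_{d}h \rfloor - M, \lfloor \log_{d} h \rfloor + M] \cap \mathbb{Z}$. The expected number of removed edges at depths strictly below $I_M$ is of order $d^{-M}$ and tends to zero as $M \to \infty$; any cluster whose top edge sits strictly above $I_M$ has deterministic scaled size of order $d^{-M}$, also vanishing. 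For each $i = \lfloor \log_{d} h \rfloor + k$ with $|k| \leq M$, the number of removed edges is $\mathrm{Binomial}(d^{i}, 1 - q_{h})$ with mean converging to $c d^{k-\rho}$, and counts at distinct depths are independent, so they jointly converge to independent $\mathrm{Poisson}(cd^{k-\rho})$ variables. Conditional on which edges in $I_M$ are removed, the subtrees hanging below them are pairwise disjoint, percolations on them are conditionally independent, and applying the root-cluster statement to each yields $h d^{-h} |C| \to d^{1-k+\rho} e^{-c}/(d-1)$ in probability for a cluster whose top edge is at depth $\lfloor\log_{d} h\rfloor + k$.

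Combining these four observations and letting $M \to \infty$ identifies the limit point process of scaled non-root cluster sizes with the atomic Poisson process having rates $cd^{k-\rho}$ at the locations $d^{1+\rho-k}e^{-c}/(d-1)$ for $k \in \mathbb{Z}$, which matches the claimed intensity $c \frac{d}{d-1} e^{-c} \Lambda_{\rho}(\mathrm{d}x)$. The main obstacle lies in the last step: the root-cluster concentration must be invoked uniformly across a random but tight family of subtrees of heights $h-i$ with $i \in I_M$, so one must verify that the effective percolation parameter $c(h-i)/h$ differs from $c$ only by $O(\log h/h)$, which makes the perturbation of the deterministic subtree limit negligible. A conditional Chebyshev argument on the joint second moment over the finitely many relevant subtrees suffices to close the estimate.
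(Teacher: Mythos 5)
Your proof is correct and follows essentially the same route as the paper: identify the removed edges near depth $\log_d h$, obtain a Poisson limit for their heights, and combine with the near-deterministic root-cluster size of the subtree dangling below each such edge. The paper packages the first step as weak convergence of a counting process $(H_h(t))_{t\ge 0}$ to a Poisson process (Proposition~\ref{Pro4} and Corollary~\ref{cor3}), whereas you truncate to a window $I_M$ around $\lfloor\log_d h\rfloor$ and let $M\to\infty$; these are interchangeable packagings of the same observation. For the root-cluster concentration, the paper cites \cite[Section~3, Theorem~1 and Proposition~1]{Be1}, while you supply a self-contained second-moment argument via the least-common-ancestor decomposition $\mathbb{E}\bigl[(G_0^{(h)})^2\bigr]=\sum_w q_h^{-|w|}\bigl(S(w)^2-\sum_i S_i(w)^2\bigr)$; this identity is correct and does yield $\mathbb{E}[(G_0^{(h)})^2]/\mathbb{E}[G_0^{(h)}]^2\to 1$, and you correctly note that the perturbation of the effective parameter by $O(\log h/h)$ for subtrees of height $h-i$ with $i\in I_M$ is negligible. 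One imprecision to flag: the subtrees below the removed edges in $I_M$ need not be pairwise disjoint, since two removed edges in the window may be nested. This causes no genuine gap here --- the sizes you need converge in probability to deterministic constants, so joint convergence follows from the marginals without disjointness, and moreover nesting within $I_M$ occurs with probability $O(d^M/h)=o(1)$ --- but a full write-up should address it explicitly rather than assert disjointness.
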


The plan for the rest of this paper is as follows. Section \ref{sec3m} is devoted to the proof of Theorem \ref{Teo1}. In Section \ref{sec4}, we prove Theorem \ref{Teo2}. We show in Section \ref{prooflema} a law of large numbers (Proposition \ref{lemma1}) for the number of sub-trees in $T_{n}^{\rm sp}$ with cardinality (number of balls) at least $n/\ln n$, which may be of independent interest. (This is a technical ingredient used in the proof of Theorem \ref{Teo1}.) Finally, in Section \ref{sec5} and Section \ref{LatticeCase}, we show that an easy adaptation of the arguments used in the proof of Theorem \ref{Teo1} allows us to prove Theorem \ref{Teo4} and Theorem \ref{Teo3}, respectively

For the rest of the work, we remove the superscript $(n)$ (resp.\ $(h)$) from our notation $C_{i}^{(n)}$ and $\hat{C}_{i}^{(n)}$ (resp.\ $G_{i}^{(h)}$), and instead, we only write $C_{i}$ and $\hat{C}_{i}$ (resp.\ $G_{i}$) for simplicity. 

\section{Proof of Theorem \ref{Teo1}} \label{sec3m}

We split the proof of Theorem \ref{Teo1} in two parts. We start by studying the sizes of percolated sub-trees that are close to the root. One could refer to these percolated sub-trees as the early clusters since their distance to the root is the smallest. Then we show that the largest percolation clusters can be found amongst those (early) percolated sub-trees. 

\subsection{Sizes of early clusters} \label{sec3}

For $i \in \mathbb{N}$, let ${\bf e}_{i,n}$ be the edge with the $i$-th smallest height (we break ties by ordering the edges from left
to right, however, the order is not relevant in the proofs) that has been removed and ${\bf v}_{i,n}$ the endpoint (vertex) of ${\bf e}_{i,n}$ that is the furthest away from the root of $T_{n}^{\rm sp}$. Let $T_{i,n}$ be the sub-tree of $T_{n}^{\rm sp}$ that is rooted at ${\bf v}_{i,n}$ and let $n_{i,n}$ be the number of balls stored in the sub-tree $T_{i,n}$. For $t \in [0,\infty)$, we write
\begin{eqnarray*}
N_{n}(0) \coloneqq  0 \hspace*{3mm} \text{and} \hspace*{3mm} N_{n}(t) \coloneqq \sum_{i \geq 1} \mathds{1}_{\left\{n_{i,n} \geq \frac{n}{t \ln n} \right\}} = \sum_{i \geq 1} \mathds{1}_{\left\{ (n/n_{i,n})\frac{1}{\ln n} \leq t \right\}}
\end{eqnarray*}

\noindent for the number of sub-trees $T_{i,n}$ that store at least $\lfloor n/(t\ln n) \rfloor$ balls. 

\begin{theorem} \label{Pro1}
Suppose that Condition \ref{Cond1} holds and that $p_{n}$ fulfills (\ref{eq1}). Then, the following convergence holds in the sense of weak convergence of finite dimensional distributions,
\begin{eqnarray*}
(N_{n}(t), t \geq 0) \overset{d}{\longrightarrow} (N(t), t \geq 0), \hspace*{5mm} \text{as} \hspace*{2mm} n \rightarrow \infty,
\end{eqnarray*}

\noindent where $(N(t), t \geq 0)$ is a Poisson process with intensity $c \mu^{-1}$.
\end{theorem}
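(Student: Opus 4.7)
The plan is to condition on the tree $T_{n}^{\rm sp}$ and then apply the classical Poisson approximation for the Binomial distribution, once a suitable law of large numbers has been established for the number of vertices whose sub-tree stores many balls. For every vertex $v$ of $T_{n}^{\rm sp}$ other than the root, write $e_v$ for the edge joining $v$ to its parent and $n_v$ for the number of balls in the sub-tree rooted at $v$, and set
$$
K_n(t) \coloneqq \#\bigl\{ v \in T_{n}^{\rm sp} \setminus \{\mathrm{root}\} : n_v \geq \lfloor n/(t \ln n) \rfloor \bigr\}, \qquad t > 0 .
$$
Since percolation removes each edge of $T_{n}^{\rm sp}$ independently with probability $1-p_n$, conditionally on $T_{n}^{\rm sp}$ the random variable $N_n(t)$ is Binomial with parameters $K_n(t)$ and $1-p_n$. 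More generally, for $0 = t_0 < t_1 < \cdots < t_{\ell}$, the increments $N_n(t_j) - N_n(t_{j-1})$ are, conditionally on $T_{n}^{\rm sp}$, mutually independent Binomial variables with respective parameters $K_n(t_j) - K_n(t_{j-1})$ and $1-p_n$, since they are built from disjoint families of edges.

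Granted this decomposition, the theorem reduces to the law of large numbers
$$
(\ln n)^{-1} K_n(t) \overset{\mathbb{P}}{\longrightarrow} t/\mu, \qquad \text{as } n \rightarrow \infty,
$$
for each fixed $t > 0$. Indeed, combining this with $(1-p_n) \ln n \rightarrow c$ and the Binomial-to-Poisson limit theorem (applied conditionally on $T_{n}^{\rm sp}$ and then uncoupled via dominated convergence in characteristic functions), one obtains that $(N_n(t_j) - N_n(t_{j-1}))_{j=1}^{\ell}$ converges jointly in distribution to a vector of independent Poisson random variables with respective parameters $c(t_j - t_{j-1})/\mu$, which are precisely the finite dimensional distributions of a Poisson process on $[0,\infty)$ with intensity $c\mu^{-1}$.

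The law of large numbers for $K_n(t)$ is essentially the content of Proposition \ref{lemma1}, proved in Section \ref{prooflema}; the case $t=1$ is treated there explicitly and the extension to general $t > 0$ is immediate, since $\ln(t \ln n) = \ln \ln n + O(1)$. The natural route is via moments. Writing $\mathbb{E}[K_n(t)]$ as a sum over vertices $u$ of the infinite $b$-ary tree of the probability that $u \in T_{n}^{\rm sp}$ and $n_u \geq \lfloor n/(t \ln n) \rfloor$, a many-to-one (size-biasing) identity converts the sum into an expectation along a random walk whose step distribution is $-\ln V_1$ tilted by the density $b V_1$. This walk has positive drift $\mu$ and, thanks to Condition \ref{Cond1}, is non-lattice, so Blackwell's renewal theorem yields $\mathbb{E}[K_n(t)] = (t/\mu)\ln n\,(1+o(1))$. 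A two-spine computation for the second moment gives $\mathrm{Var}(K_n(t)) = o((\ln n)^2)$, and Chebyshev's inequality closes the argument.

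The main obstacle is precisely this law of large numbers. Unlike a pure branching construction, the split tree model forces a fixed total of $n$ balls to be distributed, which induces subtle dependencies between the sub-tree sizes along different branches that must be controlled. The $\mathrm{Binomial}(n, W_u)$-type approximation to $n_u$, where $W_u$ is the product of split weights along the path to $u$, is only asymptotic and must be justified uniformly over vertices $u$ at depth of order $\log \log n$. Likewise, the variance estimate requires a careful treatment of pairs of paths that diverge at arbitrary heights, and this is where the bulk of the technical effort of Section \ref{prooflema} will lie.
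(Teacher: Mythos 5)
Your argument is essentially identical to the paper's: condition on the tree so that $N_n(t)$ and its increments are (conditionally) Binomial with size parameter $M_n(t)$ (your $K_n(t)$), invoke the law of large numbers of Proposition~\ref{lemma1}, and pass to the Poisson limit by dominated convergence. One small slip: Proposition~\ref{lemma1} is already stated and proved for arbitrary $t\geq 0$, so no extension from $t=1$ is needed; and while your sketch of its proof (many-to-one identity, renewal theorem, second-moment control) captures the right spirit, the paper's actual route is via the stochastic bounds~(\ref{eq4}) on $n_v$ together with Holmgren's renewal-type estimate~(\ref{Ceciresult}) and a conditional variance decomposition.
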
 

We expect that the convergence in Theorem \ref{Pro1} can be
improved in order to show convergence in distribution of the process $(N_{n}(t), t \geq 0)$ for the Skorohod topology on the space $\mathbb{D}([0, \infty), \mathbb{R})$ of right-continuous functions with left limits to a Poisson process with intensity $c \mu^{-1}$; see, for instance, \cite[Theorem 12.6, Chapter 3]{Billi1999}. We leave this as an open problem.

The proof of Theorem \ref{Pro1} uses the following result which provides a law of large numbers for the number of sub-trees in $T_{n}^{\rm sp}$ with cardinality at least  $n/(t \ln n)$. More precisely, for a vertex $v \in T_{n}^{\rm sp}$ that is not the root $\circ$, let $n_{v}$ denote the number of balls stored in the sub-tree of $T_{n}^{\rm sp}$ rooted at $v$. Define
\begin{eqnarray}  \label{MM}
M_{n}(t) \coloneqq \# \left\{ v \in T_{n}^{\rm sp}: v \neq \circ \hspace*{3mm} \text{and} \hspace*{3mm} n_{v} \geq \frac{n}{t\ln n} \right\}, \hspace*{5mm} \text{for} \hspace*{2mm} t \in [0, \infty). 
\end{eqnarray}

\begin{proposition} \label{lemma1}
Suppose that Condition \ref{Cond1} holds. Then, for every fixed $t \in [0, \infty)$, we have that
$ (\ln n)^{-1}M_{n}(t) \overset{\mathbb{P}}{\longrightarrow} \mu^{-1}t$, as $n \rightarrow \infty$.
\end{proposition}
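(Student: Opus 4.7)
The plan is to translate the problem into a branching-random-walk (BRW) counting question on the infinite $b$-ary tree $\mathbb{T}$ and then apply renewal theory. For $v\in\mathbb{T}$ let $\pi_v := \prod_{i=1}^{|v|} V_{u_{i-1}, c_i}$ denote the product of split-vector coordinates along the root-to-$v$ path, and set $S(v) := -\ln \pi_v$. Because each ball, independently of the others, descends $\mathbb{T}$ as a Markov chain driven by the split vectors (the bucket-overflow rule only reshuffles balls within a single subtree), we have $n_v \mid \{\mathcal{V}_u\} \sim \mathrm{Bin}(n, \pi_v)$. A Chernoff bound applied to the (at most $O(\ln n)$, in expectation) vertices whose $\pi_v$ lies in a neighborhood of the threshold $1/(t\ln n)$ then yields, for any fixed $t_1<t<t_2$, the sandwich $\tilde M_n(t_1)\leq M_n(t)\leq \tilde M_n(t_2)$ with probability tending to $1$, where $\tilde M_n(t):=\#\{v\in\mathbb{T}\setminus\{\circ\}:\pi_v\geq 1/(t\ln n)\}$. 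It thus suffices to prove $\tilde M_n(t)/\ln n\overset{\mathbb{P}}{\longrightarrow} t/\mu$ and then let $t_1\uparrow t$ and $t_2\downarrow t$.

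Set $y_n := \ln(t\ln n)$. For the first moment, the many-to-one formula (equivalently, size-biasing by the deterministic martingale $\sum_{|v|=k}\pi_v\equiv 1$) gives
\[
\mathbb{E}[\tilde M_n(t)] = \sum_{k\geq 1}\mathbb{E}\bigl[e^{S^*_k}\mathds{1}_{S^*_k\leq y_n}\bigr] = \int_{(0,y_n]} e^{z}\, U^*(\mathrm{d}z),
\]
where $(S^*_k)$ is a non-lattice (by Condition~\ref{Cond1}) random walk with i.i.d.\ increments of mean $\mu$ and $U^*$ is its renewal measure. Rewriting this as $e^{y_n}\int e^{-(y_n-z)} U^*(\mathrm{d}z)$ and applying the key renewal theorem to the directly Riemann integrable function $x\mapsto e^{-x}\mathds{1}_{x\geq 0}$ yields $\mathbb{E}[\tilde M_n(t)]\sim e^{y_n}/\mu = t\ln n/\mu$. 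For the second moment I partition the pair sum in $\mathbb{E}[\tilde M_n(t)(\tilde M_n(t)-1)]$ according to the most recent common ancestor $u$ of the two indexed vertices. Ancestor-descendant pairs contribute $O(\ln n\cdot \ln\ln n)$ via a weighted version of the above (an extra factor $|w|-1\sim y_n/\mu$ appears). For pairs $(v,w)$ that are incomparable, conditioning on $(\pi_u,\mathcal{V}_u)$ renders the two sub-branch contributions independent and produces, to leading order,
\[
\frac{e^{2y_n}}{\mu^2}\,\mathbb{E}\Bigl[\sum_{u\in\mathbb{T}}\pi_u^2\Bigr]\,\mathbb{E}\Bigl[\sum_{a\neq b} V_a V_b\Bigr] = \frac{e^{2y_n}}{\mu^2}\cdot\frac{1}{1-b\mathbb{E}[V_1^2]}\cdot\bigl(1-b\mathbb{E}[V_1^2]\bigr) = \bigl(\mathbb{E}[\tilde M_n(t)]\bigr)^2,
\]
which cancels exactly against the square of the mean. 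Here the geometric identity $\sum_{k\geq 0}\mathbb{E}[\sum_{|u|=k}\pi_u^2] = \sum_k (b\mathbb{E}[V_1^2])^k = 1/(1-b\mathbb{E}[V_1^2])$ uses $b\mathbb{E}[V_1^2]<1$, a consequence of $\mathbb{E}[V_1]=1/b$ and $V_1<1$ a.s.\ (Condition~\ref{Cond1}). The conclusion is $\mathrm{Var}(\tilde M_n(t)) = o((\ln n)^2)$, and Chebyshev's inequality closes the argument.

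The main technical obstacle is precisely this second-moment analysis. For a generic BRW the counts $Z_k(y):=\#\{|v|=k:S(v)\leq y\}$ exhibit Biggins-type fluctuations with $\mathrm{Var}(Z_k)\asymp(\mathbb{E}[Z_k])^2$, so no term-by-term variance bound on $\tilde M_n(t)=\sum_k Z_k(y_n)$ can be tight. The exact cancellation above relies crucially on the split-tree constraint $\sum_{j=1}^b V_j=1$: it simultaneously trivializes the $\theta=1$ Biggins martingale, forces $b\mathbb{E}[V_1^2]<1$, and produces the dual identity $\mathbb{E}[\sum_u\pi_u^2]\cdot\mathbb{E}[\sum_{a\neq b}V_a V_b]=1$. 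Executing it rigorously will require keeping track of the subleading error in the renewal asymptotic $\mathbb{E}[\tilde M_n(t)]=e^{y_n}/\mu + o(e^{y_n})$ uniformly in the shift $S(u)\in[0,y_n]$ throughout the two-spine computation, which is where most of the work will lie.
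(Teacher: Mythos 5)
Your plan reaches the right answer but by a route genuinely different from the paper's, and the difference matters in both halves of the mean/variance argument. For the mean, both approaches reduce to an idealized count (your $\tilde M_n$, the paper's $\hat M_n$) and invoke the key renewal theorem for the drift-$\mu$ size-biased walk — the paper cites Holmgren's Lemma 2.1, i.e.\ $\sum_k b^k\mathbb{P}(S_k\leq s)\sim\mu^{-1}e^s$, which is exactly your many-to-one statement after the $bV_1$ change of measure — but the coupling steps differ. The paper never asserts a high-probability sandwich between $M_n$ and idealized counts; after truncating at depth $m_n=\lfloor\beta\log_b\ln n\rfloor$ (Lemma \ref{lemma3}) it bounds $|\mathbb{E}[M_n^{(1)}]-\mathbb{E}[\hat M_n^{(1)}]|=o(\ln n)$ by a Chebyshev estimate for $\mathrm{Bin}(n,\mathcal{L}_v)$ on two narrow shells around the threshold and lets the shell widths shrink. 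Your sandwich $\tilde M_n(t_1)\leq M_n(t)\leq\tilde M_n(t_2)$ \emph{with high probability} is a strictly stronger coupling claim: proving it needs Chernoff (not Chebyshev) plus a union bound over the random vertex set and requires handling the $\pm s\,d_n(v)$ slack in (\ref{eq4}) — your statement $n_v\mid\{\mathcal{V}_u\}\sim\mathrm{Bin}(n,\pi_v)$ is only a stochastic sandwich, not an equality. This is doable but is itself a lemma you should flag explicitly. For the variance the two routes are disjoint. The paper conditions on the subtree sizes at a shallow depth $\theta_n$, writes $\mathrm{Var}(M_n)=\mathbb{E}[\mathrm{Var}(\cdot\mid\Omega_{\theta_n})]+\mathrm{Var}(\mathbb{E}[\cdot\mid\Omega_{\theta_n}])$, controls the first term by a crude pigeonhole count of heavy descendants per level together with Lemma \ref{lemma3}, and the second by feeding the first-moment asymptotic back in; it never touches a two-point function and never needs uniformity in a shift parameter. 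Your route instead computes $\mathbb{E}[\tilde M_n(\tilde M_n-1)]$ by most-recent-common-ancestor decomposition, uses the exact cancellation $\mathbb{E}\bigl[\sum_u\pi_u^2\bigr]\cdot\mathbb{E}\bigl[\sum_{a\neq b}V_aV_b\bigr]=(1-b\mathbb{E}[V_1^2])^{-1}\cdot(1-b\mathbb{E}[V_1^2])=1$, and bounds ancestor–descendant pairs by $O(\ln n\,\ln\ln n)$; all this algebra is correct ($b\mathbb{E}[V_1^2]<1$ indeed follows from Condition \ref{Cond1}). Your version is conceptually sharper — it explains \emph{why} the variance collapses (the constraint $\sum_jV_j=1$ trivializes the $\theta=1$ Biggins martingale and kills the off-diagonal term exactly) — and would port to other BRW counting problems, but making "to leading order" rigorous requires a renewal asymptotic with an error uniform over the MRCA displacement $S(u)\in[0,y_n]$ and over small children weights $V_{u,a}$, which you correctly identify as the main open step; the paper's conditional-variance trick sidesteps that uniformity question entirely.
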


The proof of Proposition \ref{lemma1} is rather technical and it is postponed to Section \ref{prooflema}. 

\begin{proof}[Proof of Theorem \ref{Pro1}]
For a vertex $v \in T_{n}^{\rm sp}$ that is not the root $\circ$, let ${\bf e}_{v}$ be the edge that connects $v$ with its parent. Define the event $E_{v} \coloneqq \{ \text{the edge} \, \, {\bf e}_{v} \, \, \text{has been removed after percolation} \}$ and write $\xi_{v} \coloneqq \mathds{1}_{E_{v}}$. So, $(\xi_{v})_{v \neq \circ}$ is a sequence of i.i.d. Bernoulli random variables with parameter $1-p_{n}$ (that is, the probability of removing an edge). Then, it is clear that
\begin{eqnarray*}
N_{n}(t) = \sum_{v \neq \circ} \mathds{1}_{\left\{n_{v} \geq \frac{n}{t \ln n} \right\}} \xi_{v}, \hspace*{5mm} t \in [0,\infty).
\end{eqnarray*}
Let $\mathcal{F}$ be the $\sigma$-algebra generated by $(n_{v})_{v \neq \circ}$. Note that the variables $(\xi_{v})_{v \neq \circ}$ are independent of $\mathcal{F}$. Conditioning on $\mathcal{F}$, we have that $(N_{n}(t), t \geq 0)$ has independent increments and $N_{n}(t) \stackrel{d}{=} {\rm Bin} \left( M_{n}(t), 1-p_{n}  \right)$, where ${\rm Bin}(m,q)$ denotes a binomial $(m,q)$ random variable. Moreover,\begin{eqnarray*}
N_{n}(t) - N_{n}(s) \stackrel{d}{=} {\rm Bin} \left( M_{n}(t) - M_{n}(s), 1-p_{n}  \right), 
\end{eqnarray*}

\noindent for $0 \leq s \leq t$. Since $p_{n}$ fulfills (\ref{eq1}), we have that $1- p_{n} \rightarrow 0$ as $n \rightarrow \infty$. Furthermore, Proposition \ref{lemma1} implies that 
$(1- p_{n})(M_{n}(t) - M_{n}(s)) \overset{\mathbb{P}}{\longrightarrow} c\mu^{-1}(t-s)$, as $n \rightarrow \infty$. Therefore, without conditioning on $\mathcal{F}$ (by the dominated convergence theorem),
\begin{eqnarray*}
N_{n}(t) - N_{n}(s) \overset{d}{\longrightarrow}  {\rm  Poisson}(c\mu^{-1} (t-s)),  \hspace*{5mm} \text{as} \hspace*{2mm} n \rightarrow \infty, 
\end{eqnarray*}
\noindent where ${\rm Poisson}(\lambda)$ denotes a Poisson random variable with mean $\lambda$. Therefore, our result follows by the law of rare events.
\end{proof}

\begin{corollary} \label{cor1}
Suppose that Condition \ref{Cond1} holds and that $p_{n}$ fulfills (\ref{eq1}). Then, for every fixed $i \in \mathbb{N}$, 
\begin{eqnarray*}
\left( \frac{\ln n}{n} n_{1,n}, \dots, \frac{\ln n}{n} n_{i,n} \right) \overset{d}{\longrightarrow} ({\rm x}_{1}, \dots, {\rm x}_{i}), \hspace*{5mm} \text{as} \hspace*{2mm} n \rightarrow \infty,
\end{eqnarray*}
\noindent where ${\rm x}_{1} > {\rm x}_{2} > \cdots$ are the atoms of a Poisson process on $(0, \infty)$ with intensity $c \mu^{-1}x^{-2}{\rm d} x$. 
\end{corollary}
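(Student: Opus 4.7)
The plan is to deduce Corollary \ref{cor1} from Theorem \ref{Pro1} by the standard duality between a counting process and its ordered atoms, followed by a change of variables. Set $S_{j,n} \coloneqq (\ln n / n) n_{j,n}$ and let $S^{(n)}_{(1)} \geq S^{(n)}_{(2)} \geq \cdots$ denote these values arranged in decreasing order. The definition $N_n(t) = \#\{ j \geq 1 : S_{j,n} \geq 1/t \}$ gives, for each $k \geq 1$ and $s > 0$, the duality
\[
\bigl\{ S^{(n)}_{(k)} \leq s \bigr\} = \bigl\{ N_n(1/s) \leq k-1 \bigr\},
\]
and an identical duality relates the ordered atoms $0 < y_1 < y_2 < \cdots$ of the limiting Poisson process $N$ of rate $c\mu^{-1}$ to $N$ itself.

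Theorem \ref{Pro1} supplies the finite-dimensional convergence of $N_n$ to $N$. Evaluated at $t_k = 1/s_k$ with $0 < s_i \leq \cdots \leq s_1$ and combined with the duality above, this yields the joint convergence in distribution
\[
\bigl(S^{(n)}_{(1)}, \dots, S^{(n)}_{(i)}\bigr) \overset{d}{\longrightarrow} (1/y_1, \dots, 1/y_i), \qquad n \to \infty.
\]
A routine change of variables under $y \mapsto 1/y$ shows that the image of a Poisson point process on $(0,\infty)$ with intensity $c\mu^{-1}\,{\rm d}y$ is a Poisson point process on $(0,\infty)$ with intensity $c\mu^{-1} x^{-2}\,{\rm d}x$, whose atoms listed in decreasing order are precisely $\mathrm{x}_k \coloneqq 1/y_k$. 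This delivers the statement of the corollary provided the tuple $\bigl((\ln n/n) n_{1,n}, \dots, (\ln n/n) n_{i,n}\bigr)$ on the left-hand side is read as the first $i$ normalised sub-tree sizes in the point-process (equivalently, decreasing order-statistic) sense.

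The only delicate issue I anticipate is the reconciliation of the two indexing conventions: the subscript $j$ in $n_{j,n}$ orders sub-trees by the height of the corresponding removed edge, whereas the limit $({\rm x}_1, \dots, {\rm x}_i)$ is size-ordered. Adopting the point-process reading makes this a purely notational identification; a strictly indexed reading would require additionally showing that the height order of the first removed edges matches the size order of their sub-trees asymptotically, and this would be the main obstacle, though it is not needed for the subsequent application to Theorem \ref{Teo1}, where only the largest cluster sizes enter.
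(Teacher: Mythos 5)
Your proof is correct and follows essentially the same route as the paper: the duality between a counting process and its ordered occurrence times, applied to Theorem \ref{Pro1}, followed by the change of variables $y \mapsto 1/y$ sending a rate-$c\mu^{-1}$ Poisson process on $(0,\infty)$ to one with intensity $c\mu^{-1}x^{-2}\,{\rm d}x$. The indexing point you flag is real and you resolve it correctly: the paper's own proof tacitly re-labels so that $n_{1,n}\geq n_{2,n}\geq\cdots$ (equivalently, works with order statistics), and the parenthetical remark in the definition of $\mathbf{e}_{i,n}$ — ``the order is not relevant in the proofs'' — is precisely the paper's acknowledgment of this identification; as you observe, the downstream use in Theorem \ref{Teo1} only involves the decreasing rearrangement, so nothing is lost.
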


\begin{proof}
Note that $(n/n_{1,n}) \frac{1}{\ln n} \leq (n/n_{2,n}) \frac{1}{\ln n} \leq \cdots$ are the atoms (or occurrence times) of the counting process $(N_{n}(t), t \geq 0)$ ranked in increasing order. Theorem \ref{Pro1} implies that for every fixed $i \in \mathbb{N}$, 
\begin{eqnarray*}
\left( \frac{n}{n_{1,n} \ln n}, \dots, \frac{n}{n_{i,n} \ln n} \right) \overset{d}{\longrightarrow} ({\rm y}_{1}, \dots, {\rm y}_{i}),  \hspace*{5mm} \text{as} \hspace*{2mm} n \rightarrow \infty,
\end{eqnarray*}
\noindent where ${\rm y}_{1} < {\rm y}_{2} < \cdots$ are the atoms of the Poisson process $(N(t), t \geq 0)$ of rate $c\mu^{-1}$. To see this, note that the atoms are determined by the values of the random variables $N_{n}(t)$ and $N(t)$: For instance, the event $(n/n_{1,n}) \frac{1}{\ln n} > s$ and $ (n/n_{2,n}) \frac{1}{\ln n} > t$, for $0 \leq s \leq t$, is the same as the event $N_{n}(s)<1$ and $N_{n}(t)<2$. Therefore, the continuous mapping theorem (\cite[Theorem 2.7, Chapter 1]{Billi1999}) implies that
\begin{eqnarray*}
\left( \frac{\ln n}{n} n_{1,n}, \dots, \frac{\ln n}{n} n_{i,n} \right) \overset{d}{\longrightarrow} (1/{\rm y}_{1}, \dots, 1/{\rm y}_{i}),\hspace*{5mm} \text{as} \hspace*{2mm} n \rightarrow \infty.
\end{eqnarray*} 
\noindent Our claim then follows from basic properties of Poisson processes (\cite[Proposition 3.7, Chapter 3]{Res1987}).
\end{proof}

\subsection{Asymptotic sizes of the largest percolation clusters}

Recall that, for $i \in \mathbb{N}$, we let ${\bf e}_{i,n}$ be the edge with the $i$-th smallest height that has been removed and ${\bf v}_{i,n}$ the endpoint (vertex) of ${\bf e}_{i,n}$ that is the furthest away from the root of $T_{n}^{\rm sp}$. Recall also that $T_{i,n}$ denotes the sub-tree of $T_{n}^{\rm sp}$ that is rooted at ${\bf v}_{i,n}$ and that we write $n_{i,n}$ for the number of balls stored in the sub-tree $T_{i,n}$. We denote by $\tilde{C}_{i}$ the size (number of balls) of the root-cluster of $T_{i,n}$ after performing percolation (where here of course root means ${\bf v}_{i,n}$). We also write $\tilde{C}_{i}^{\ast}$ for the size (number of balls) of the second largest cluster of $T_{i,n}$ that does not contain its root.

In the sequel, we shall use the following notation $A_{n} = B_{n}+ o_{\rm p}(f(n))$, where $A_{n}$ and $B_{n}$ are two sequences of real random variables and $f: \mathbb{N} \rightarrow (0, \infty)$ is a function, to indicate that $\lim_{n \rightarrow \infty} |A_{n} - B_{n}|/f(n) = 0$ in probability.  

\begin{proposition} \label{Pro2}
Suppose that Condition \ref{Cond1} holds and that $p_{n}$ fulfills (\ref{eq1}). For every fixed $i \in \mathbb{N}$, 
\begin{eqnarray*}
\tilde{C}_{i}^{\ast} = o_{\rm p}(n/ \ln n).
\end{eqnarray*}
\noindent Furthermore, we have that
\begin{eqnarray*}
\left( \frac{\tilde{C}_{1}}{n_{1,n}} , \dots, \frac{\tilde{C}_{i}}{n_{i,n}}  \right) \overset{\mathbb{P}}{\longrightarrow} (e^{-c/\mu}, \dots, e^{-c/\mu}),\hspace*{5mm} \text{as} \hspace*{2mm} n \rightarrow \infty.
\end{eqnarray*}
\end{proposition}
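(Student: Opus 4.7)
The plan is to exploit the recursive nature of the split-tree construction: conditionally on the number of balls it contains, each early sub-tree $T_{j,n}$ is itself distributed as a split tree, which lets us reduce the statements on $\tilde{C}_j$ and $\tilde{C}_j^{\ast}$ to the already-established first-part conclusion on the supercritical root cluster from \cite{Ber2019} (namely $n^{-1}C_0^{(n)} \to e^{-c/\mu}$, together with uniqueness of the giant cluster), applied \emph{inside} each $T_{j,n}$.

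Concretely, I would first observe that, given the vertices ${\bf v}_{1,n}, \dots, {\bf v}_{i,n}$ and the sizes $n_{1,n},\dots,n_{i,n}$, the sub-trees $T_{1,n},\dots,T_{i,n}$ are conditionally independent split trees with the same split-vector law as $T_n^{\rm sp}$; moreover the restriction of the Bernoulli bond-percolation on $T_n^{\rm sp}$ to the edges of $T_{j,n}$ is still an independent Bernoulli percolation with parameter $p_n$. By Corollary \ref{cor1}, $n_{j,n} = \Theta_{\rm p}(n/\ln n)$, so $n_{j,n}\to\infty$ in probability and $\ln n_{j,n}/\ln n \to 1$ in probability. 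Writing
\begin{equation*}
p_n \;=\; 1 - \frac{c}{\ln n} + o\!\left(\frac{1}{\ln n}\right) \;=\; 1 - \frac{c}{\ln n_{j,n}} + o_{\rm p}\!\left(\frac{1}{\ln n_{j,n}}\right)
\end{equation*}
places the percolation on $T_{j,n}$ back into the supercritical regime (\ref{eq1}) with effective constant $c$.

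At this point I would apply the \cite{Ber2019} result directly to each $T_{j,n}$, viewed as a split tree of (random) size $n_{j,n}$ with percolation parameter $p_n$. Its root cluster is, by construction, the cluster of ${\bf v}_{j,n}$ in the original percolation on $T_n^{\rm sp}$, so the convergence $n^{-1}C_0^{(n)} \to e^{-c/\mu}$ applied to $T_{j,n}$ yields $\tilde{C}_j / n_{j,n} \to e^{-c/\mu}$ in probability; similarly, the uniqueness-of-giant statement gives $\tilde{C}_j^{\ast}/n_{j,n} \to 0$ in probability, and combined with $n_{j,n} = O_{\rm p}(n/\ln n)$ this delivers $\tilde{C}_j^{\ast} = o_{\rm p}(n/\ln n)$. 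Since convergence in probability of a random vector to a deterministic limit is equivalent to marginal convergence in probability, the joint statement for $(\tilde{C}_1/n_{1,n}, \dots, \tilde{C}_i/n_{i,n})$ follows from the $j$-by-$j$ marginal assertions.

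The main obstacle is technical: the cited \cite{Ber2019} result is formulated for a split tree of a \emph{deterministic} size $n$ with percolation parameter $1-c/\ln n + o(1/\ln n)$, whereas here both the size $n_{j,n}$ and the discrepancy between $p_n$ and $1-c/\ln n_{j,n}$ are random. I would handle this by first conditioning on the $\sigma$-field generated by ${\bf v}_{j,n}$ and $n_{j,n}$, and then passing along subsequences of $n$ on which $n_{j,n}$ follows a deterministic sequence tending to infinity (alternatively, by a Skorokhod coupling that turns the distributional convergence in Corollary \ref{cor1} into an almost sure one); the control $\ln n_{j,n}/\ln n \to 1$ is precisely what absorbs the discrepancy between $p_n$ and the exact supercritical scaling $1-c/\ln n_{j,n} + o(1/\ln n_{j,n})$ required by the \cite{Ber2019} statement.
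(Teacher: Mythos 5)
Your proposal follows essentially the same route as the paper: condition on $n_{j,n}$, observe that $T_{j,n}$ is again a split tree under supercritical percolation since $\ln n_{j,n}/\ln n \to 1$ in probability (by Corollary~\ref{cor1}), and invoke \cite[Lemma 2]{Ber2019} to obtain both $\tilde{C}_j/n_{j,n}\to e^{-c/\mu}$ and $\tilde{C}_j^\ast/n_{j,n}\to 0$, then upgrade to the joint statement because the limit is deterministic. Your extra care about applying a deterministic-$n$ result at a random size (conditioning plus subsequence/Skorokhod argument) is a sound way to fill a step the paper leaves implicit.
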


\begin{proof}
It suffices to show our claim for every fixed $j \in \{1, \dots, i\}$ since convergence in probability to a constant implies the joint convergence for every fixed $i \in \mathbb{N}$. Given $n_{j,n}$, we see that $T_{j,n}$ is a split tree with $n_{j,n}$ balls. Then supercritical Bernoulli bond-percolation in $T_{j,n}$ corresponds to percolation parameters satisfying
\begin{eqnarray*}
1-p_{n_{j,n}} = c/\ln n_{j,n} + o\left( 1/\ln n_{j,n}\right),
\end{eqnarray*}
\noindent where $c >0$ is fixed. Corollary \ref{cor1} implies that $(\ln n_{j,n})/\ln n \overset{\mathbb{P}}{\longrightarrow}1$, as $n \rightarrow \infty$. Hence 
\begin{eqnarray*}
1-p_{n_{j,n}} = 1-p_{n} + o_{\rm p}\left( 1/\ln n \right).
\end{eqnarray*}
\noindent Therefore, an application of \cite[Lemma 2]{Ber2019} shows that $\tilde{C}_{j}/n_{j,n} \overset{\mathbb{P}}{\longrightarrow} e^{-c/\mu}$, as $n \rightarrow \infty$, which proves the second assertion. Moreover, \cite[Lemma 2]{Ber2019} also shows that $\tilde{C}_{j}^{\ast}/n_{j,n} \overset{\mathbb{P}}{\longrightarrow} 0$, as $n \rightarrow \infty$, and by Corollary \ref{cor1}, we conclude that $\tilde{C}_{j}^{\ast} = o_{\rm p}(n/\ln n)$. This completes the proof. 
\end{proof}

\begin{corollary} \label{cor2}
Suppose that Condition \ref{Cond1} holds and that $p_{n}$ fulfills (\ref{eq1}). Then, for every fixed $i \in \mathbb{N}$, 
\begin{eqnarray*}
\left( \frac{\ln n}{n} \tilde{C}_{1}, \dots, \frac{\ln n}{ n} \tilde{C}_{i}  \right) \xrightarrow[]{d} ({\rm x}_{1}, \dots, {\rm x}_{i}) ,\hspace*{5mm} \text{as} \hspace*{2mm} n \rightarrow \infty,
\end{eqnarray*}
\noindent where ${\rm x}_{1} >{\rm x}_{2} > \cdots$ are the atoms of a Poisson process on $(0, \infty)$ with intensity $c\mu^{-1}e^{-c/\mu}x^{-2} {\rm d}x$. 
\end{corollary}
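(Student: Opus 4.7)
The argument is essentially to combine Corollary \ref{cor1} with Proposition \ref{Pro2} via Slutsky's theorem. My plan is to write
\begin{eqnarray*}
\frac{\ln n}{n}\,\tilde{C}_{j} \; = \; \frac{\tilde{C}_{j}}{n_{j,n}} \cdot \frac{\ln n}{n}\, n_{j,n}, \qquad j=1,\dots,i,
\end{eqnarray*}
and treat the two factors as jointly convergent. The second factor, viewed as a vector over $j=1,\dots,i$, converges jointly in distribution to $({\rm x}_{1},\dots,{\rm x}_{i})$, the first $i$ ordered atoms of the Poisson process on $(0,\infty)$ with intensity $c\mu^{-1}x^{-2}{\rm d}x$ (by Corollary \ref{cor1}). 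The first factor is a vector of ratios that converges in probability to the deterministic vector $(e^{-c/\mu},\dots,e^{-c/\mu})$ by Proposition \ref{Pro2}.

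Since convergence in probability to a constant vector combines with convergence in distribution to give joint convergence in distribution (Slutsky), I get
\begin{eqnarray*}
\left( \frac{\ln n}{n} \tilde{C}_{1}, \dots, \frac{\ln n}{n} \tilde{C}_{i}\right) \xrightarrow[]{d} \left( e^{-c/\mu}\,{\rm x}_{1}, \dots, e^{-c/\mu}\,{\rm x}_{i}\right), \qquad n \rightarrow \infty.
\end{eqnarray*}
It remains to verify that the right-hand side consists of the first $i$ ordered atoms of a Poisson process with intensity $c\mu^{-1}e^{-c/\mu}x^{-2}{\rm d}x$. Because multiplication by the positive constant $a \coloneqq e^{-c/\mu}$ preserves the ordering of the atoms, this is a consequence of the mapping theorem for Poisson processes: under the pushforward by $x\mapsto ax$, the intensity $c\mu^{-1}x^{-2}{\rm d}x$ transforms into $c\mu^{-1}a\,y^{-2}{\rm d}y = c\mu^{-1}e^{-c/\mu}y^{-2}{\rm d}y$, as required.

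There is essentially no obstacle to overcome beyond a careful bookkeeping of the joint convergence; the only subtlety is to ensure that Proposition \ref{Pro2} is applied in its joint form (which is why it was stated for the full vector, not just coordinatewise), so that one can legitimately pair it with the joint convergence in Corollary \ref{cor1}. Once this is in place, the continuous mapping theorem together with the Poisson mapping theorem finishes the proof.
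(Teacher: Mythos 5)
Your proposal is correct and follows essentially the same route as the paper: the paper also combines Corollary \ref{cor1} and Proposition \ref{Pro2} (via \cite[Theorem 3.9]{Billi1999}, which is the converging-together/Slutsky lemma) to get joint convergence of the $2i$-vector, applies the continuous mapping theorem to the coordinatewise product map, and then invokes properties of Poisson processes to identify the scaled atoms with the atoms of the intensity $c\mu^{-1}e^{-c/\mu}x^{-2}\,\mathrm{d}x$ process. Your explicit verification of the pushforward intensity under $x\mapsto e^{-c/\mu}x$ simply spells out what the paper summarizes as ``basic distributional properties of Poisson processes.''
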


\begin{proof}
For every $i \in \mathbb{N}$ fixed, Corollary \ref{cor1} and Proposition \ref{Pro2} together with \cite[Theorem 3.9]{Billi1999} imply that 
\begin{eqnarray*}
\left( \frac{\ln n}{ n} n_{1,n}, \dots, \frac{\ln n}{ n} n_{i,n}, \frac{\tilde{C}_{1}}{n_{1,n}} , \dots, \frac{\tilde{C}_{i}}{n_{i,n}}  \right)\overset{d}{\longrightarrow} \left(\rm{y}_{1}, \dots, \rm{y}_{i}, e^{-c/\mu}, \dots, e^{-c/\mu} \right), \hspace*{5mm} \text{as} \hspace*{2mm} n \rightarrow \infty,
\end{eqnarray*} 
\noindent where ${\rm y}_{1} >{\rm y}_{2} > \cdots$ are the atoms of a Poisson process on $(0, \infty)$ with intensity $c \mu^{-1}y^{-2}{\rm d} y$. Define the function $H: \mathbb{R}^{2i} \rightarrow \mathbb{R}^{i}$ by $H(x_{1}, \dots x_{2i}) = (x_{1}x_{i+1} , \dots, x_{i}x_{2i})$ for all $(x_{1}, \dots, x_{2i}) \in \mathbb{R}^{2i}$. 
Observe that $H$ is continuous and that
\begin{eqnarray*}
\left( \frac{\ln n}{ n} \tilde{C}_{1}, \dots, \frac{\ln n}{n} \tilde{C}_{i} \right) = H \left( \frac{\ln n}{ n} n_{1,n}, \dots, \frac{\ln n}{ n} n_{i,n}, \frac{\tilde{C}_{1}}{n_{1,n}} , \dots, \frac{\tilde{C}_{i}}{n_{i,n}}  \right).
\end{eqnarray*}
\noindent Therefore, by the continuous mapping theorem (\cite[Theorem 2.7, Chapter 1]{Billi1999}),
\begin{eqnarray*}
\left( \frac{\ln n}{n} \tilde{C}_{1}, \dots, \frac{\ln n}{ n} \tilde{C}_{i} \right) \overset{d}{\longrightarrow} ({\rm y}_{1}e^{-c/\mu}, \dots, {\rm y}_{i}e^{-c/\mu}), \hspace*{5mm} \text{as} \hspace*{2mm} n \rightarrow \infty,
\end{eqnarray*}
\noindent and our claim follows from basic distributional properties of Poisson processes.
\end{proof}

The last ingredient in the proof of Theorem \ref{Teo1} consists of verifying that for every fixed $i \in \mathbb{N}$, one can choose $\ell \in \mathbb{N}$ large enough such that with probability tending to $1$, as $n \rightarrow \infty$, the $i$-th largest percolation cluster of $T_{n}^{\rm sp}$ can be found amongst the root-clusters of the percolated tree-components $T_{1,n}, \dots, T_{\ell,n}$. Rigorously, denote by
$\tilde{C}_{1, \ell} \geq \tilde{C}_{2, \ell} \geq \cdots \geq \tilde{C}_{\ell, \ell}$ the rearrangement in decreasing order of the $\tilde{C}_{i}$ for $i =1, \dots, \ell$. Recall that $C_{i}$ stands for the size (number of balls) of the $i$-th largest cluster (that does not contain the root).

\begin{lemma} \label{lemma2}
Suppose that Condition \ref{Cond1} holds and that $p_{n}$ fulfills (\ref{eq1}). Then for each fixed $i \in \mathbb{N}$, 
\begin{eqnarray*}
\lim_{\ell \rightarrow \infty} \liminf_{n \rightarrow \infty} \mathbb{P} \left(\tilde{C}_{k, \ell}= C_{k} \hspace*{3mm} \text{for every} \hspace*{2mm} k =1, \dots, i    \right) = 1.
\end{eqnarray*}
\end{lemma}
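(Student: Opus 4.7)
The plan is a union-bound argument based on thresholding cluster sizes at $\delta n/\ln n$ for a suitably small $\delta > 0$. The starting point is the fundamental decomposition: every cluster of $T_n^{\rm sp}$ not containing the root lies in a unique sub-tree $T_{j,n}$ (for some $j \geq 1$), and inside $T_{j,n}$ it is either the root cluster, of size $\tilde{C}_j$, or one of the remaining clusters, whose size is bounded by $\tilde{C}_j^{\ast}$ and hence by $n_{j,n}$. Consequently, to get $\tilde{C}_{k,\ell} = C_k$ for all $k \leq i$, it suffices, for $\ell$ and $n$ large, to exhibit a threshold for which
\begin{equation*}
\tilde{C}_{i,\ell} \geq 2\delta\, n/\ln n \qquad \text{and} \qquad \max\!\Bigl(\max_{1 \leq j \leq \ell}\tilde{C}_j^{\ast},\ \max_{j > \ell} n_{j,n}\Bigr) < \delta\, n/\ln n
\end{equation*}
hold with probability at least $1 - \epsilon$, for any prescribed $\epsilon > 0$.

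For the lower bound, since the atoms $({\rm x}_1, \ldots, {\rm x}_i)$ appearing in Corollary \ref{cor2} are almost surely strictly positive, choose $\delta > 0$ small enough that $\mathbb{P}({\rm x}_i > 2\delta) \geq 1 - \epsilon/4$. Corollary \ref{cor2} then yields $\mathbb{P}(\tilde{C}_k \geq 2\delta\, n/\ln n \text{ for every } k \leq i) \geq 1 - \epsilon/3$ for large $n$, on which event the multiset $\{\tilde{C}_1, \ldots, \tilde{C}_\ell\}$ contains at least $i$ values above $2\delta\, n/\ln n$, so $\tilde{C}_{i,\ell} \geq 2\delta\, n/\ln n$ for every $\ell \geq i$. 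The first part of the upper bound, $\max_{j \leq \ell} \tilde{C}_j^{\ast} < \delta\, n/\ln n$ for a fixed $\ell$, is then immediate from Proposition \ref{Pro2}, which gives $\tilde{C}_j^{\ast} = o_{\rm p}(n/\ln n)$ for each individual $j$.

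The main obstacle is controlling the tail $\max_{j > \ell} n_{j,n}$. I would combine Theorem \ref{Pro1} and Corollary \ref{cor1} as follows. By Theorem \ref{Pro1}, $N_n(1/\delta) \overset{d}{\longrightarrow} N(1/\delta)$, a Poisson variable of mean $c/(\delta\mu)$, so there exists $\ell_0$ with $\mathbb{P}(N_n(1/\delta) \leq \ell_0) \geq 1 - \epsilon/6$ for all large $n$. For any fixed $\ell \geq \ell_0$, Corollary \ref{cor1} shows that the height-ordered vector $((\ln n/n) n_{1,n}, \ldots, (\ln n/n) n_{\ell+1,n})$ converges in distribution to the $\ell+1$ largest atoms of a Poisson process on $(0, \infty)$, listed in strictly decreasing order, so by the continuous mapping theorem, with probability tending to $1$ the first $\ell+1$ sub-tree sizes in height order coincide (as a set) with the top $\ell+1$ sub-tree sizes in size order. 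Intersecting with $\{N_n(1/\delta) \leq \ell_0 \leq \ell\}$, which asserts that at most $\ell$ sub-trees have cardinality $\geq \delta\, n/\ln n$, forces every such large sub-tree to lie among $T_{1,n}, \ldots, T_{\ell,n}$, whence $\max_{j > \ell} n_{j,n} < \delta\, n/\ln n$.

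Assembling the three estimates, on an event of probability at least $1 - \epsilon$ the quantity $\tilde{C}_{i,\ell}$ strictly dominates every cluster of $T_n^{\rm sp}$ outside $\{\tilde{C}_k : k \leq \ell\}$; hence the $i$ largest non-root clusters are precisely $\tilde{C}_{1,\ell} \geq \cdots \geq \tilde{C}_{i,\ell}$, which is the claim. The subtle point, which I expect to require the most care, is reconciling the height-ordering in the definition of the sub-trees $T_{j,n}$ with the size-ordering relevant for comparing cluster sizes; this reconciliation is read off from the strict decrease of the limiting vector in Corollary \ref{cor1}.
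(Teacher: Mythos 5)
Your overall strategy — bound $\tilde{C}_{i,\ell}$ from below using $\tilde{C}_{i,\ell} \geq \min(\tilde C_1,\dots,\tilde C_i)$ together with Corollary~\ref{cor2}, then bound from above the sizes of clusters not accounted for by $\tilde C_1,\dots,\tilde C_\ell$ — is exactly the paper's route (the paper states the inequality $C_i \geq \min\{\tilde C_1,\dots,\tilde C_i\}$ and refers to the proof of \cite[Lemma~6]{Be3} for the rest). Your write-up is more detailed and self-contained, which is fine. Two comments.

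First, a minor simplification you missed: every non-root cluster of $T_n^{\rm sp}$ \emph{is} the root cluster of some $T_{j,n}$ (take the highest vertex of the cluster; the edge above it must have been removed, so that vertex is ${\bf v}_{j,n}$ for some $j$). Hence the multiset of non-root cluster sizes is exactly $\{\tilde C_j : j\geq 1\}$, and a cluster not among $\{\tilde C_1,\dots,\tilde C_\ell\}$ must be $\tilde C_{j'}$ for some $j'>\ell$, which is bounded by $n_{j',n}$. The appeal to $\tilde{C}_j^{\ast}$ is therefore unnecessary (not wrong, just redundant work).

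Second, and more importantly, there is a genuine gap in your control of $\max_{j>\ell} n_{j,n}$, precisely at the step you flag as subtle. You argue that Corollary~\ref{cor1} plus the continuous mapping theorem shows that, with high probability, ``the first $\ell+1$ sub-tree sizes in height order coincide (as a set) with the top $\ell+1$ sub-tree sizes in size order.'' But what Corollary~\ref{cor1} together with the strict decrease of $({\rm x}_1,\dots,{\rm x}_{\ell+1})$ actually yields is only that $n_{1,n} > n_{2,n} > \cdots > n_{\ell+1,n}$ with high probability, i.e.\ that the first $\ell+1$ height-ordered sizes are \emph{internally} sorted. This does not rule out the existence of some $j'>\ell+1$ with $n_{j',n} > n_{\ell+1,n}$, so the two sets may still differ. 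The intersection with $\{N_n(1/\delta)\le\ell_0\}$ then gives $n_{\ell+1,n}<\delta n/\ln n$, but again says nothing about $n_{j,n}$ for $j>\ell+1$. Closing this gap requires a separate tail estimate to the effect that, with high probability, all removed edges whose subtree contains at least $\delta n/\ln n$ balls lie at heights that are exceeded by only boundedly many other removed edges. This is exactly the reconciliation the paper elides by deferring to \cite[Lemma~6]{Be3}; you are right to single it out, but the justification you offer does not establish it.
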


\begin{proof}
A Poisson process on $(0,\infty)$ with intensity $c\mu^{-1}e^{-c/\mu}x^{-2} {\rm d}x$ has infinitely many atoms. Moreover, in the notation of Corollary \ref{cor2}, a.s.\ $\min \{{\rm  x}_{1}, \dots,  {\rm  x}_{i} \} >0$. Note that $C_{i}$ cannot be smaller than $\min \{ \tilde{C}_{1}, \dots, \tilde{C}_{i} \}$. Then our claim follows from Corollary \ref{cor2} and along the lines of the proof of \cite[Lemma 6]{Be3}. 
\end{proof}

We can now finish the proof of Theorem \ref{Teo1}. 

\begin{proof}[Proof of Theorem \ref{Teo1}]
We have already proven the first claim in \cite[Lemma 2]{Ber2019}. We only prove the second claim. For every fixed $i \in \mathbb{N}$, consider a continuous function $f:[0,\infty)^{i} \rightarrow [0,1]$ and fix $\varepsilon >0$. According to Lemma \ref{lemma2}, we may choose $\ell \in \mathbb{N}$ sufficiently large so that there exists $n_{\varepsilon} \in \mathbb{N}$ such that
\begin{eqnarray*}
\mathbb{E}\left[ f \left( \frac{\ln n}{n} C_{1}, \dots,  \frac{\ln n}{n} C_{i} \right) \right] \leq \mathbb{E}\left[ f \left( \frac{\ln n}{n} \tilde{C}_{1,\ell}, \dots,  \frac{\ln n}{n}\tilde{C}_{i,\ell} \right) \right]  + \varepsilon
\end{eqnarray*}
\noindent holds for all $n \geq n_{\varepsilon}$. We then deduce from Corollary \ref{cor2} and the previous bound that
\begin{eqnarray*}
\limsup_{n \rightarrow \infty} \mathbb{E}\left[ f \left( \frac{\ln n}{n} C_{1}, \dots,  \frac{\ln n}{n} C_{i} \right) \right] \leq \mathbb{E}\left[ f \left( {\rm x}_{1}, \dots, {\rm x}_{i} \right) \right]  + \varepsilon.
\end{eqnarray*}
\noindent Since $\varepsilon >0$ can be arbitrary small and $f$ can be replaced by $1-f$, this establishes Theorem \ref{Teo1}. 
\end{proof}

\section{Proof of Theorem \ref{Teo2}} \label{sec4}

In this section, we prove Theorem \ref{Teo2} along similar lines as in the proof of Theorem \ref{Teo1}. Indeed, we only need a version of Proposition \ref{Pro2} where we consider that cluster sizes are given by the number of vertices instead of the number of balls. 

For $i \in \mathbb{N}$, recall the definition given in Section \ref{sec3} of  the sub-trees $T_{i,n}$ rooted at the vertex ${\bf v}_{i,n}$. Recall also that $n_{i,n}$ denotes the number of balls stored at $T_{i,n}$. We denote by $\bar{C}_{i}$ the size (number of vertices) of the root-cluster of $T_{i,n}$ after performing percolation (where here of course root means ${\bf v}_{i,n}$). We also write $\bar{C}_{i}^{\ast}$ for the size (number of vertices) of the second largest cluster of $T_{i,n}$ that does not contain its root.

\begin{proposition} \label{Pro3}
Suppose that Conditions \ref{Cond1}  and \ref{Cond2}  hold and that $p_{n}$ fulfills (\ref{eq1}). For every fixed $i \in \mathbb{N}$, 
\begin{eqnarray*}
\bar{C}_{i}^{\ast} = o_{\rm p}(n/ \ln n).
\end{eqnarray*}
\noindent Recall the constant $\alpha$ defined in Condition \ref{Cond2}. Then, we also have that
\begin{eqnarray*}
\left( \frac{\bar{C}_{1}}{n_{1,n}} , \dots, \frac{\bar{C}_{i}}{n_{i,n}}  \right) \overset{\mathbb{P}}{\longrightarrow} (\alpha e^{-c/\mu}, \dots, \alpha e^{-c/\mu}),\hspace*{5mm} \text{as} \hspace*{2mm} n \rightarrow \infty.
\end{eqnarray*}
\end{proposition}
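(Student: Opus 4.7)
The plan is to mirror the proof of Proposition \ref{Pro2}, substituting the ball-count asymptotic \cite[Lemma 2]{Ber2019} with its vertex-count counterpart \cite[Lemma 3]{Ber2019}; the latter is precisely the result that requires Condition \ref{Cond2} and produces the prefactor $\alpha$. Since convergence in probability to deterministic constants entails joint convergence, I fix $j \in \{1, \ldots, i\}$ and argue coordinate-wise.

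Conditionally on $n_{j,n}$, the subtree $T_{j,n}$ is distributed as a split tree with the same parameters $b, s, s_{0}, s_{1}, \mathcal{V}$ containing $n_{j,n}$ balls, by the branching structure of the split tree construction; in particular it still satisfies Conditions \ref{Cond1} and \ref{Cond2}. Corollary \ref{cor1} gives $\frac{\ln n}{n} n_{j,n} \overset{d}{\longrightarrow} {\rm x}_{j}$ with ${\rm x}_{j} > 0$ almost surely, so $n_{j,n} \to \infty$ in probability and $\ln n_{j,n}/\ln n \overset{\mathbb{P}}{\longrightarrow} 1$. Rewriting
\begin{eqnarray*}
1 - p_{n} = \frac{c}{\ln n} + o\!\left( \frac{1}{\ln n} \right) = \frac{c}{\ln n_{j,n}} + o_{\rm p}\!\left( \frac{1}{\ln n_{j,n}} \right),
\end{eqnarray*}
shows that, viewed from the scale of $T_{j,n}$, the percolation parameter lies in the supercritical regime (\ref{eq1}). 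Applying \cite[Lemma 3]{Ber2019} to $T_{j,n}$ then delivers $\bar{C}_{j}/n_{j,n} \overset{\mathbb{P}}{\longrightarrow} \alpha e^{-c/\mu}$ and $\bar{C}_{j}^{\ast}/n_{j,n} \overset{\mathbb{P}}{\longrightarrow} 0$ as $n \to \infty$. The first convergence is the second assertion of the proposition. For the first assertion, combining $\bar{C}_{j}^{\ast}/n_{j,n} \to 0$ in probability with Corollary \ref{cor1} (which ensures $n_{j,n} = O_{\rm p}(n/\ln n)$) yields $\bar{C}_{j}^{\ast} = o_{\rm p}(n/\ln n)$.

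The main obstacle is more bookkeeping than substance: one must justify that \cite[Lemma 3]{Ber2019}, stated for a deterministic sequence of ball counts and a deterministic supercritical sequence of percolation parameters, may be applied with the random ball count $n_{j,n}$ and with a percolation parameter whose supercriticality holds only in probability. This is resolved by first working conditionally on $n_{j,n}$, exploiting that the conditional law of $T_{j,n}$ is that of a genuine split tree with $n_{j,n}$ balls satisfying both standing conditions, and then passing to the unconditional convergence in probability using $n_{j,n} \to \infty$ in probability as guaranteed by Corollary \ref{cor1}.
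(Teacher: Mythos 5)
Your proposal is correct and follows essentially the same route as the paper: reduce to a single coordinate $j$, observe that conditionally on $n_{j,n}$ the subtree $T_{j,n}$ is itself a split tree with $n_{j,n}$ balls, use Corollary~\ref{cor1} to show $\ln n_{j,n}/\ln n \overset{\mathbb{P}}{\longrightarrow} 1$ so that the percolation parameter remains supercritical at the scale of $T_{j,n}$, and then invoke \cite[Lemma 3]{Ber2019} (the vertex-count analogue of \cite[Lemma 2]{Ber2019}) together with $n_{j,n} = O_{\rm p}(n/\ln n)$ to obtain both assertions. Your added remark about the conditioning/bookkeeping needed to apply a result stated for deterministic ball counts to the random $n_{j,n}$ is a welcome clarification of a step the paper leaves implicit, but it does not constitute a different argument.
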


\begin{proof}
The same argument as in the proof of Proposition \ref{Pro2} shows that, under Conditions \ref{Cond1}-\ref{Cond2}, the supercritical percolation regime in $T_{j,n}$ corresponds to parameters fulfilling (\ref{eq1}). That is, \cite[Lemma 3]{Ber2019} implies that $\bar{C}_{j}/n_{j,n} \overset{\mathbb{P}}{\longrightarrow} \alpha e^{-c/\mu}$, as $n \rightarrow \infty$ which proves the second assertion. Moreover, \cite[Lemma 3]{Ber2019} also shows that $\bar{C}_{j}^{\ast}/n_{j,n} \overset{\mathbb{P}}{\longrightarrow} 0$, as $n \rightarrow \infty$, and by Corollary \ref{cor1}, $\bar{C}_{j}^{\ast} = o_{\rm p}(n/\ln n)$. 
\end{proof}

We can now prove Theorem \ref{Teo2}. We only provide enough details to convince the reader that everything can be carried out as in the proof of Theorem \ref{Teo1}.

\begin{proof}[Proof of Theorem \ref{Teo2}]
The first claim has been proved in \cite[Lemma 3]{Ber2019}, and thus, we only prove the second one. Following exactly the same argument as in the proof of Corollary \ref{cor2}, we deduce from Corollary \ref{cor1} and Proposition \ref{Pro3} that for every fixed $i \in \mathbb{N}$,
\begin{eqnarray*}
\left( \frac{\ln n}{n} \bar{C}_{1}, \dots, \frac{\ln n}{ n} \bar{C}_{i}  \right) \overset{d}{\longrightarrow} ({\rm x}_{1}, \dots, {\rm x}_{i}),\hspace*{5mm} \text{as} \hspace*{2mm} n \rightarrow \infty,
\end{eqnarray*}
\noindent where ${\rm x}_{1} >{\rm x}_{2} > \cdots$ are the atoms of a Poisson process on $(0, \infty)$ with intensity $c\alpha \mu^{-1}e^{-c/\mu}x^{-2} {\rm d}x$. 
For $\ell \in  \mathbb{N}$, denote by $ \bar{C}_{1, \ell} \geq \bar{C}_{2, \ell} \geq \cdots \geq \bar{C}_{\ell, \ell} $ the rearrangement in decreasing order of the $\bar{C}_{i}$ for $i =1, \dots, \ell$. Following the proof of Lemma \ref{lemma2}, one can show that for every fixed $i \in \mathbb{N}$,
\begin{eqnarray*}
\lim_{\ell \rightarrow \infty} \liminf_{n \rightarrow \infty} \mathbb{P} \left(\bar{C}_{k, \ell}= \hat{C}_{k} \hspace*{3mm} \text{for every} \hspace*{2mm} k =1, \dots, i    \right) = 1.
\end{eqnarray*}

Finally, by combining the previous two facts, the proof of Theorem \ref{Teo2} is completed in analogy to the proof of Theorem \ref{Teo1}.
\end{proof}

\section{Proof of Proposition \ref{lemma1}} \label{prooflema}

This section is devoted to the proof of Proposition \ref{lemma1}. We start by recalling some well-known properties of random split trees. For a vertex $v \in T_{n}^{\rm sp}$ that is not the root $\circ$, let $n_{v}$ denote the number of balls stored at the sub-tree of $T_{n}^{\rm sp}$ rooted at $v$. Let $d_{n}(v)$ denote the depth (or height) of the vertex $v$ in $T_{n}^{\rm sp}$. Let $(V_{v,k}: k=1, \dots, d_{n}(v))$ be the collection of i.i.d.\ random variables on $[0,1]$ given by the split vectors associated with the vertices in the unique path from $v$ to the root $\circ$ of $T_{n}^{\rm sp}$.  In particular, $V_{v,k} = V_{1}$ in distribution which implies that $\mathbb{E}[V_{v,k}] = \mathbb{E}[V_{1}] = 1/b$ and $\mathbb{E}[V_{v,k}^{2}] = \mathbb{E}[V_{1}^{2}] < 1/b$. Then let $\mathcal{L}_{v} \coloneqq \prod_{k=1}^{d_{n}(v)} V_{v,k}$. If $d_{n}(v) =i$, conditioning on the split vectors, it is well-known that $n_{v}$ is in the stochastic sense bounded by the following random variables
\begin{eqnarray} \label{eq4}
\text{Bin}\left(n, \mathcal{L}_{v}   \right) - si \leq n_{v} \leq
\text{Bin}\left(n, \mathcal{L}_{v}  \right) + s_{1}i;
\end{eqnarray}
\noindent this property has been used before in \cite{Luc1999} and \cite{Holm2012}.  By calculating the first and second moment of the Binomial distribution, it follows that
$E[n_{v}] \leq n  \mathbb{E}[\mathcal{L}_{v} ] + s_{1}i = n b^{-i} + s_{1}i$ and
\begin{eqnarray} \label{eq19}
E[n_{v}^{2}]  \leq  n^{2}   \mathbb{E}[\mathcal{L}_{v} ^{2}] + n\left(\mathbb{E}[\mathcal{L}_{v} ] -  \mathbb{E}[\mathcal{L}_{v} ^{2}] \right) + 2i s_{1} n \mathbb{E}[\mathcal{L}_{v} ] + s_{1}^{2}i^{2}. 
\end{eqnarray}

We start by proving some crucial lemmas that are used in the proof of Proposition \ref{lemma1}. For $t \in [0, \infty)$, recall the definition of $M_{n}(t)$ in (\ref{MM}). Recall also that we use the notation $\log_{b}x = \ln x/\ln b$ for the logarithm with base $b$ of $x > 0$. We then write $m_{n} =  \lfloor \beta \log_{b} \ln n \rfloor$ for some large constant $\beta > 0$. For $t \in [0, \infty)$, we define 
\begin{eqnarray*}
M_{n}^{(1)}(t) \coloneqq \# \left\{ v \in T_{n}^{\rm sp}: 1 \leq d_{n}(v) \leq m_{n} \hspace*{3mm} \text{and} \hspace*{3mm} n_{v} \geq \frac{n}{t\ln n} \right\}
\end{eqnarray*}
\noindent and
\begin{eqnarray*}
M_{n}^{(2)}(t) \coloneqq M_{n}(t) - M_{n}^{(1)}(t) = \# \left\{ v \in T_{n}^{\rm sp}: d_{n}(v) > m_{n} \hspace*{3mm} \text{and} \hspace*{3mm} n_{v} \geq \frac{n}{t\ln n} \right\}.
\end{eqnarray*}

\begin{lemma} \label{lemma3}
Suppose that Condition \ref{Cond1} holds. One can choose $\beta >0$ large enough such that for every fixed $t \in [0, \infty)$ we have that $ (\ln n)^{-2}\mathbb{E}[(M_{n}^{(2)}(t))^{2}] \rightarrow 0$ as $n \rightarrow \infty$.
\end{lemma}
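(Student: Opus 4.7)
The plan is to expand
\[\mathbb{E}\bigl[(M_n^{(2)}(t))^2\bigr]=\sum_{u,v\in\mathbb{T}\,:\,d(u),d(v)>m_n}\mathbb{P}(n_u\geq k_n,\,n_v\geq k_n),\]
with $k_n:=\lfloor n/(t\ln n)\rfloor$, and to split the double sum into the diagonal $(u=v)$, the ancestor pairs, and the non-ancestor pairs further indexed by the depth $\ell$ of their least common ancestor. The decisive quantities are $\gamma:=b\mathbb{E}[V_1^2]$ and $\delta:=b\mathbb{E}[V_1^4]$, both strictly less than $1$ under Condition~\ref{Cond1} (since $V_1\in(0,1)$ a.s.).

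For the diagonal I would apply Chebyshev's inequality together with (\ref{eq19}) to obtain $\mathbb{P}(n_v\geq k_n)\leq C(t\ln n)^2\mathbb{E}[V_1^2]^i$ at any vertex $v$ of depth $i$, the lower-order bucket-correction terms from (\ref{eq4})--(\ref{eq19}) being harmless throughout the relevant range $i=O(\ln n)$. Summing over the $b^i$ vertices at each depth $i>m_n$ produces the geometric series $\sum_{i>m_n}\gamma^i$ and hence $\mathbb{E}[M_n^{(2)}(t)]=O((\ln n)^{2-\beta|\log_b\gamma|})$. For ancestor pairs the same estimate is multiplied by an $O(\log\log n)$ factor coming from the number of proper ancestors of $v$ at depth $>m_n$, which is still $o((\ln n)^2)$ for any $\beta>0$.

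The main task is the non-ancestor case. Conditional on the splits, $(n_u,n_v)$ are, up to the bucket correction $s_1i$ of (\ref{eq4}), negatively associated multinomial marginals with parameters $\mathcal{L}_u,\mathcal{L}_v$, so
\[\mathbb{P}(n_u\geq k_n,n_v\geq k_n\mid\mathrm{splits})\leq\mathbb{P}(n_u\geq k_n\mid\mathrm{splits})\,\mathbb{P}(n_v\geq k_n\mid\mathrm{splits}).\]
Applying Chebyshev to each factor and taking expectations produces $\mathbb{P}(n_u,n_v\geq k_n)\leq C(t\ln n)^4\,\mathbb{E}[\mathcal{L}_u^2\mathcal{L}_v^2]$ plus lower order terms. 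Using the path decomposition through the LCA $w$ at depth $\ell$---a shared squared prefix $\mathcal{L}_w^2$ giving $\mathbb{E}[V_1^4]^{\ell}$, the joint factor $(V_{w,c_u}V_{w,c_v})^2$ at $w$ giving $\mathbb{E}[V_1^2V_2^2]$ by exchangeability, and two independent disjoint subpaths below $w$ giving $\mathbb{E}[V_1^2]^{i+j-2\ell-2}$---together with the count $(b-1)b^{i+j-\ell-1}$ of ordered non-ancestor pairs at depths $(i,j,\ell)$, the contribution per $(i,j,\ell)$ simplifies to a constant multiple of $(t\ln n)^4\,\gamma^{i+j-2}\,(\delta/\gamma^2)^{\ell}$.

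Summing $i,j>m_n$ yields a factor $\gamma^{2m_n}=(\ln n)^{-2\beta|\log_b\gamma|}$. The sum over $\ell\leq\min(i,j)-1$ is bounded if $\delta\leq\gamma^2$, and otherwise one uses $\delta/\gamma=\mathbb{E}[V_1^4]/\mathbb{E}[V_1^2]<1$ (strict since $V_1<1$ a.s.) to re-sum the residual series in $\min(i,j)$, picking up at most a $\log\log n$ factor and replacing $\gamma^{2m_n}$ by $\delta^{m_n}$. In either case the non-ancestor total is at most $O\bigl((\ln n)^4\max(\gamma^{2m_n},\,\delta^{m_n})\log\log n\bigr)$, which is $o((\ln n)^2)$ whenever $\beta>\max(1/|\log_b\gamma|,\,2/|\log_b\delta|)$. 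The main obstacle is the case $\delta>\gamma^2$, where the $\ell$-sum is not $O(1)$; closing the estimate there relies on the strict inequality $\delta<\gamma$ guaranteed by $V_1<1$ a.s.
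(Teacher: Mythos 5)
Your route is genuinely different from the paper's. You expand $\mathbb{E}[(M_n^{(2)}(t))^2]$ directly into a double sum of pair probabilities $\mathbb{P}(n_u\geq k_n,\,n_v\geq k_n)$ and control them via a least-common-ancestor decomposition together with negative association, which forces you to track both $b\mathbb{E}[V_1^2]$ and $b\mathbb{E}[V_1^4]$. The paper instead uses a pigeonhole trick: since the subtrees at a common depth store disjoint sets of balls, at most $\lceil t\ln n\rceil$ vertices at any fixed depth can satisfy $n_v\geq n/(t\ln n)$, so $\sum_{v}\mathds{1}_{A_v}\mathds{1}_{\{d_n(v)=i\}}\leq \varepsilon(t\ln n)\mathds{1}_{B_i}$ (display (\ref{eq7})). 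This collapses the count at each level to a scalar times a single Bernoulli indicator $\mathds{1}_{B_i}$, and a Cauchy--Schwarz over depth pairs then only requires a first-moment (union) bound $\mathbb{P}(B_i)\leq b^i\mathbb{P}({\rm Bin}(n,L_i)\gtrsim n/\ln n)$ at each level. This avoids pair probabilities, the LCA decomposition, the negative-association step, and the $\mathbb{E}[V_1^4]$ bookkeeping altogether; it only needs $b\mathbb{E}[V_1^2]<1$.

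There is a concrete gap in your argument: you sum over \emph{all} depths $i>m_n$ using $b^i$ as the vertex count at depth $i$, and you dismiss the ``bucket correction'' terms as harmless ``throughout the relevant range $i=O(\ln n)$''. But nothing in your proposal restricts the sum to $i=O(\ln n)$, and for deeper levels the estimate fails: from (\ref{eq19}), $\mathbb{P}(n_v\geq k_n)\leq (t\ln n)^2\big((\mathbb{E}[V_1^2])^i+ n^{-1}b^{-i}(1+2is_1)+ n^{-2}s_1^2i^2\big)$, and after multiplying by $b^i$ and summing over $i$ the second term contributes $\sum_i n^{-1}(t\ln n)^2(1+2is_1)$ and the third contributes $\sum_i n^{-2}(t\ln n)^2 s_1^2 i^2 b^i$, both of which diverge or dominate once $i$ ranges over all depths present in $T_n^{\rm sp}$. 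The paper handles this by splitting $M_n^{(2)}(t)=X_n(t)+X_n^{\rm c}(t)$ at a second cutoff $\tilde m_n=\lfloor C\ln n\rfloor$ and controlling $X_n^{\rm c}(t)$ via Holmgren's bound on $\mathbb{E}[\#\{v:d_n(v)>\tilde m_n\}]$; you need an analogous step (or a deterministic per-level cap via pigeonhole) to close the tail. Two smaller points: for the ancestor pairs the factor is $\sum_{i>m_n}(i-m_n)\gamma^i=O(\gamma^{m_n})$, not the $O(\log\log n)$ you state (harmless, but the claimed bookkeeping is off); and the negative-association step for $(n_u,n_v)$ should be stated through the underlying i.i.d.\ ball placements into disjoint subtrees rather than asserting a ``multinomial'' joint law, since the split-tree ball distribution is not exactly multinomial and the $\pm s\,d_n(\cdot)$ corrections of (\ref{eq4}) are only stochastic bounds.
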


\begin{proof}
For some constant $C>0$, we write $\tilde{m}_{n} = \lfloor C\ln n  \rfloor$ and define 
\begin{eqnarray*}
X_{n}(t) \coloneqq \# \left\{ v \in T_{n}^{\rm sp}: m_{n} < d_{n}(v) \leq \tilde{m}_{n} \hspace*{2mm} \text{and} \hspace*{2mm} n_{v} \geq \frac{n}{t\ln n}  \right\}
\end{eqnarray*}
\noindent and
\begin{eqnarray*}
X_{n}^{\rm c}(t) \coloneqq M_{n}^{(2)}(t) - X_{n}(t) = \# \left\{ v \in T_{n}^{\rm sp}: d_{n}(v) > \tilde{m}_{n} \hspace*{2mm} \text{and} \hspace*{2mm} n_{v} \geq \frac{n}{t\ln n}  \right\}.
\end{eqnarray*}
\noindent Then 
\begin{eqnarray*}
\mathbb{E}[(M_{n}^{(2)}(t))^{2}] = \mathbb{E}[(X_{n}(t))^{2}] + 2\mathbb{E}[X_{n}(t) X_{n}^{\rm c}(t)] + \mathbb{E}[(X_{n}^{\rm c}(t))^{2}].
\end{eqnarray*}

\noindent Note that $X_{n}^{\rm c}(t) \leq \# \left\{ v \in T_{n}^{\rm sp}:  d_{n}(v) > \tilde{m}_{n} \right\}$. Then, \cite[Remark 3.4]{Holm2012} allows us to choose $C>0$ such that $\mathbb{E}[\# \left\{ v \in T_{n}^{\rm sp}:  d_{n}(v) > \tilde{m}_{n} \right\}] = O(n^{-1})$ and thus, we see that $(\ln n)^{-2}\mathbb{E}[ (X_{n}^{\rm c}(t))^{2}] \rightarrow 0$, as $n \rightarrow \infty$. Hence we only need to check that $(\ln n)^{-2}\mathbb{E}[(X_{n}(t))^{2}] \rightarrow 0$, as $n \rightarrow \infty$, since Cauchy–Schwarz inequality would imply that 
\begin{eqnarray*}
(\ln n)^{-2}\mathbb{E}[X_{n}(t) X_{n}^{\rm c}(t)] \leq \left( (\ln n)^{-2} \mathbb{E}[(X_{n}(t))^{2}] \right)^{1/2} \left( (\ln n)^{-2} \mathbb{E}[(X_{n}^{\rm c}(t))^{2}] \right)^{-1/2} \rightarrow 0, \hspace*{4mm} \text{as} \hspace*{2mm} n \rightarrow \infty. 
\end{eqnarray*}

For $v \in T_{n}^{\text{sp}}$ such that $v \neq \circ$ and $i \in \mathbb{N}$, we define the events
\begin{eqnarray*}
A_{v} \coloneqq \left \{ n_{v} \geq \frac{n}{t\ln n} \right \} \hspace*{3mm} \text{and} \hspace*{3mm} B_{i} \coloneqq \left \{ \exists \hspace{2mm} v \in T_{n}^{\text{sp}} \hspace*{2mm} \text{with} \hspace*{2mm} d_{n}(v) = i \hspace*{2mm} \text{and such that} \hspace*{2mm}  n_{v} \geq \frac{n}{t\ln n} \right \}. 
\end{eqnarray*}
\noindent Note that the number of sub-trees rooted at vertices with depth $d$ for any $d >0$, and which, store at least $n/(t \ln n)$ balls have to be less than $\varepsilon t \ln n$, for any $\varepsilon >1$. Otherwise, one would contradict that the total number of balls (or keys) in $\mathbb{T}_{n}^{\rm sp}$ is equal to $n$. Then,
\begin{eqnarray} \label{eq7}
\mathds{1}_{B_{i}} \leq \sum_{v \neq \circ} \mathds{1}_{A_{v}} \mathds{1}_{\{ d_{n}(v) = i \}} \leq \varepsilon (t\ln n) \mathds{1}_{B_{i}},
\end{eqnarray}
\noindent for $\varepsilon>1$. Let $W_{i}$, $i \geq 1$ be i.i.d.\ copies of $V_{1}$, and define $L_{k} = \prod_{i=1}^{k} W_{i}$ for $k \in \mathbb{N}$. Hence, the inequality (\ref{eq4}) implies that for $n$ large enough and $m_{n} < i \leq \tilde{m}_{n}$,
\begin{eqnarray*}
\mathbb{P}(B_{i}) \leq \mathbb{E} \left[  \sum_{v \neq \circ} \mathds{1}_{A_{v}} \mathds{1}_{\{ d_{n}(v) = i \}} \right] \leq b^{i} \mathbb{P}\left({\rm Bin}(n, L_{i}) + s_{1}i \geq \frac{n}{t\ln n} \right) \leq b^{i} \mathbb{P}\left({\rm Bin}(n, L_{i}) \geq \frac{n}{2t \ln n} \right).
\end{eqnarray*}

\noindent The second moment of a ${\rm Bin}(m, q)$ is $m^{2}q^{2}+mq-mq^{2}$. Then, Markov's inequality shows that 
\begin{eqnarray} \label{eq8}
\mathbb{P}(B_{i}) \leq (2 t \ln n)^{2}b^{i} \left( \mathbb{E}[L_{i}^{2}] + n^{-1}(\mathbb{E}[L_{i}] - \mathbb{E}[L_{i}^{2}]) \right) \leq (2 t \ln n)^{2} \left((b \mathbb{E}[V_{1}^{2}])^{i} + n^{-1} \right)
\end{eqnarray}
\noindent since $\mathbb{E}[V_{1}^{2}] < \mathbb{E}[V_{1}] = 1/b < 1$. The inequality (\ref{eq7}) implies that 
\begin{eqnarray*}
X_{n}(t) \leq \sum_{m_{n} < i \leq \tilde{m}_{n}} \sum_{v \neq \circ} \mathds{1}_{A_{v}} \mathds{1}_{\{ d_{n}(v) = i \}}  \leq \varepsilon (t\ln n)  \sum_{m_{n} < i \leq \tilde{m}_{n}} \mathds{1}_{B_{i}}.
\end{eqnarray*}
\noindent Then the Cauchy–Schwarz inequality shows that
\begin{eqnarray*}
\mathbb{E}[(X_{n}(t))^{2}] \leq \varepsilon^{2} (t\ln n)^{2} \sum_{m_{n} < i,j \leq \tilde{m}_{n}}  \mathbb{E}[ \mathds{1}_{B_{i}} \mathds{1}_{B_{j}} ] \leq \varepsilon^{2} (t\ln n)^{2} \sum_{m_{n} < i,j \leq \tilde{m}_{n}} (\mathbb{P}(B_{i})  \mathbb{P}( B_{j}))^{1/2}.
\end{eqnarray*}

\noindent We conclude from (\ref{eq8}) that there is a constant $C_{t} >0$ such that
\begin{eqnarray*}
\mathbb{E}[(X_{n}(t))^{2}] & \leq & 4 \varepsilon^{2}(t\ln n)^{4}\left( (b\mathbb{E}[V_{1}^{2}])^{m_{n}} + n^{-1}\right) \tilde{m}_{n}^{2} \\
&\leq&  C_{t} \left( (\ln n)^{ \beta \log_{b} (b\mathbb{E}[V_{1}^{2}]) +4 } + n^{-1}  \tilde{m}_{n}^{2} (\ln n)^{4} \right).
\end{eqnarray*}
\noindent  Since $\mathbb{E}[V_{1}^{2}] < \mathbb{E}[V_{1}] = 1/b <1$, we see that  $\log_{b} (b\mathbb{E}[V_{1}^{2}]) < 0$. Then, we can choose $\beta$ large enough to obtain that  $(\ln n)^{-2}\mathbb{E}[(X_{n}(t))^{2}]\rightarrow 0$, as $n \rightarrow \infty$, which finishes the proof.
\end{proof}

Recall the definition of $M_{n}(t)$ in (\ref{MM}). 

\begin{lemma} \label{lemma4}
Suppose that Condition \ref{Cond1} holds. Then, for every fixed $t \in [0, \infty)$, we have that
$ (\ln n)^{-1}\mathbb{E}[M_{n}(t)] \rightarrow \mu^{-1}t$ as $n \rightarrow \infty$.
\end{lemma}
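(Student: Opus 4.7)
The strategy is to use the decomposition $M_n(t) = M_n^{(1)}(t) + M_n^{(2)}(t)$ introduced in Lemma \ref{lemma3}. By the Cauchy--Schwarz inequality and Lemma \ref{lemma3}, $\mathbb{E}[M_n^{(2)}(t)] \leq \sqrt{\mathbb{E}[(M_n^{(2)}(t))^{2}]} = o(\ln n)$ as soon as $\beta$ is taken large enough, so the deep vertices are negligible and it suffices to show $(\ln n)^{-1}\mathbb{E}[M_n^{(1)}(t)] \to \mu^{-1}t$.

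By linearity of expectation and the symmetry of the construction on the infinite $b$-ary tree $\mathbb{T}$,
\begin{eqnarray*}
\mathbb{E}[M_n^{(1)}(t)] = \sum_{i=1}^{m_n} b^{i}\, \mathbb{P}(n_{v^{(i)}} \geq x_n), \qquad x_n := n/(t\ln n),
\end{eqnarray*}
where $v^{(i)}$ is a fixed vertex of $\mathbb{T}$ at depth $i$. Setting $L_i := \prod_{k=1}^{i} W_k$ with $W_k$ i.i.d.\ copies of $V_1$, the sandwich bounds in (\ref{eq4}) combined with Hoeffding's inequality $\mathbb{P}(|{\rm Bin}(n,\ell) - n\ell| \geq u) \leq 2e^{-2u^{2}/n}$ yield, for every fixed $\delta \in (0,1)$,
\begin{eqnarray*}
\mathbb{P}(L_i \geq (1+\delta)x_n/n) - r_n \;\leq\; \mathbb{P}(n_{v^{(i)}} \geq x_n) \;\leq\; \mathbb{P}(L_i \geq (1-\delta)x_n/n) + r_n,
\end{eqnarray*}
with $r_n = O(\exp(-c_{\delta} n/(\ln n)^{2}))$ (the additive shifts $si, s_1 i = O(\ln\ln n)$ from (\ref{eq4}) get absorbed into the $(1\pm\delta)$-factors). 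When multiplied by $b^i \leq (\ln n)^{\beta}$ and summed over $i\leq m_n$, the error $r_n$ contributes $o(\ln n)$, so the problem reduces to estimating
\begin{eqnarray*}
\Sigma_n^{\pm} := \sum_{i=1}^{m_n} b^{i}\, \mathbb{P}(S_i \leq a_n^{\pm}), \qquad S_i := -\ln L_i, \ a_n^{\pm} := \ln(t\ln n) - \ln(1\mp\delta).
\end{eqnarray*}

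The heart of the argument is a change of measure combined with the key renewal theorem. Since $\mathbb{E}[bV_1] = 1$, the density $bv$ with respect to the law of $V_1$ defines a probability measure; let $\widehat{W}$ be distributed accordingly, and set $\widehat{S}_i := \sum_{k=1}^{i}(-\ln \widehat{W}_k)$ for i.i.d.\ copies $\widehat{W}_k$. A direct computation yields the ``many-to-one" identity
\begin{eqnarray*}
b^{i}\, \mathbb{P}(S_i \leq y) = \mathbb{E}[e^{\widehat{S}_i}\,\mathds{1}_{\{\widehat{S}_i \leq y\}}], \qquad y \in \mathbb{R}.
\end{eqnarray*}
Moreover $\widehat{S}_i$ is a random walk with i.i.d.\ positive steps of mean $\mathbb{E}[-\ln \widehat{W}] = b\,\mathbb{E}[-V_1 \ln V_1] = \mu$, whose step distribution is non-lattice since the laws of $V_1$ and $\widehat{W}$ are mutually absolutely continuous (Condition \ref{Cond1}). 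Summing the identity gives $\sum_{i \geq 1} b^i \mathbb{P}(S_i \leq a_n^{\pm}) = \int_{(0, a_n^{\pm}]} e^{u}\, U({\rm d}u)$, where $U$ is the renewal measure of $(\widehat{S}_i)_{i \geq 1}$. Applied to the directly Riemann integrable function $g(v) = e^{-v}\mathds{1}_{\{v\geq 0\}}$, the key renewal theorem yields $e^{-a_n^{\pm}}\int_{(0, a_n^{\pm}]} e^{u}\, U({\rm d}u) \to \mu^{-1}$. Since $e^{a_n^{\pm}} = t\ln n/(1\mp\delta)$, dividing by $\ln n$ and letting $\delta \downarrow 0$ produces the limit $\mu^{-1}t$.

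The main obstacle is to justify replacing $\sum_{i \geq 1}$ by the truncated $\Sigma_n^{\pm}$, i.e.\ to control the tail $\sum_{i > m_n} b^{i}\, \mathbb{P}(S_i \leq a_n^{\pm}) = \mathbb{E}[\sum_{i > m_n} e^{\widehat{S}_i} \mathds{1}_{\{\widehat{S}_i \leq a_n^{\pm}\}}]$. Since $a_n^{\pm} = O(\ln\ln n)$ while $i\mu$ grows linearly in $i$, Chernoff gives $\mathbb{P}(\widehat{S}_i \leq a_n^{\pm}) \leq e^{\lambda a_n^{\pm}}\,\phi(\lambda)^{i}$ with $\phi(\lambda) := \mathbb{E}[\widehat{W}^{\lambda}] < 1$ for any fixed $\lambda > 0$ (by Jensen, since $\widehat{W} \in (0,1)$ a.s.). Together with $e^{\widehat{S}_i} \leq e^{a_n^{\pm}}$ on the event $\{\widehat{S}_i \leq a_n^{\pm}\}$, summing produces a geometric series of total size $(\ln n)^{1+\lambda}\phi(\lambda)^{m_n}/(1 - \phi(\lambda)) = O((\ln n)^{1+\lambda - c_\lambda \beta})$, which is $o(\ln n)$ once $\beta$ is taken large enough. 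Picking the same $\beta$ as in Lemma \ref{lemma3} finishes the plan.
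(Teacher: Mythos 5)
Your proposal is correct and follows the same overall plan as the paper: truncate at depth $m_n = \lfloor \beta\log_b\ln n\rfloor$, use Lemma~\ref{lemma3} (via Cauchy--Schwarz) to discard the deep part, use the binomial sandwich~(\ref{eq4}) to compare $n_v$ with $n\mathcal{L}_v$, invoke a renewal asymptotic for $\sum_k b^k\mathbb{P}(S_k\leq s)$, and squeeze out a $\delta$-slack at the end. The differences are in the execution. First, where you re-derive the renewal asymptotic from scratch via the exponential tilting $\mathrm{d}\widehat{\mathbb{P}}/\mathrm{d}\mathbb{P}=bV_1$ (many-to-one identity) and the key renewal theorem, the paper simply cites Holmgren's Lemma~2.1, which is exactly that statement; this is a matter of self-containment, not content. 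Second, you use Hoeffding's inequality to dispatch the binomial fluctuation term, which gives an exponentially small error $r_n$, whereas the paper uses Chebyshev and gets the weaker but perfectly adequate $O((\ln n)/n)$ bound; both are harmless after multiplying by $b^{m_n}=O((\ln n)^\beta)$. Third, your comparison is done directly at the level of the probabilities $\mathbb{P}(n_{v^{(i)}}\geq x_n)$ with $(1\pm\delta)$-shifted thresholds, while the paper introduces the intermediate count $\hat{M}_n(t)$ built from $n\mathcal{L}_v$ and bounds the symmetric-difference terms $I_n^{(1)},I_n^{(2)}$; again this is a cosmetic repackaging of the same $\delta$-argument. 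One remark: the assertion that the \emph{same} $\beta$ as in Lemma~\ref{lemma3} works for your Chernoff tail bound is fine (e.g.\ take $\lambda=1$, so $\phi(1)=b\mathbb{E}[V_1^2]$ and you need $\beta\,|\log_b\phi(1)|>2$, which the Lemma~\ref{lemma3} choice already guarantees), but since both lemmas only require $\beta$ ``large enough'' it is simpler to just take the maximum of the two thresholds.
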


\begin{proof}
For $t \in [0, \infty)$, define 
\begin{eqnarray*}
\hat{M}_{n}(t) \coloneqq \# \left\{ v \in T_{n}^{\rm sp}: v \neq \circ \hspace*{3mm} \text{and} \hspace*{3mm} n \mathcal{L}_{v}  \geq \frac{n}{t\ln n} \right\}.
\end{eqnarray*}
\noindent Recall that we write $m_{n} =  \lfloor \beta \log_{b} \ln n \rfloor$ for some large constant $\beta > 0$. We then define
\begin{eqnarray*}
\hat{M}_{n}^{(1)}(t) \coloneqq \# \left\{ v \in T_{n}^{\rm sp}: 1 \leq d_{n}(v) \leq m_{n}  \hspace*{3mm} \text{and} \hspace*{3mm}  n\mathcal{L}_{v}  \geq \frac{n}{t\ln n} \right\}
\end{eqnarray*}
\noindent and
\begin{eqnarray*}
\hat{M}_{n}^{(2)}(t) \coloneqq \hat{M}_{n}(t) - \hat{M}_{n}^{(1)}(t) =  \# \left\{ v \in T_{n}^{\rm sp}: d_{n}(v) > m_{n}  \hspace*{3mm} \text{and} \hspace*{3mm}  n\mathcal{L}_{v}  \geq \frac{n}{t\ln n} \right\}.
\end{eqnarray*}

\noindent Let $W_{i}$, $i \geq 1$ be i.i.d.\ copies of $V_{1}$, and define $L_{k} = \prod_{i=1}^{k} W_{i}$ and $S_{k} = - \ln L_{k}$ for $k \in \mathbb{N}$. The Markov inequality implies that 
\begin{eqnarray*}
\mathbb{E}[\hat{M}_{n}^{(2)}(t)] = \sum_{k >m_{n}} b^{k} \mathbb{P}\left(n L_{k} \geq \frac{n}{t\ln n} \right) \leq (t \ln n)^{2}\sum_{k > m_{n}} b^{k} \mathbb{E}[L_{k}^{2}] = (t \ln n)^{2}\sum_{k > m_{n}} (b \mathbb{E}[V_{1}^{2}])^{k}.
\end{eqnarray*}
\noindent Hence there is a constant $C_{t} >0$ (that changes from one occurrence to the next) such that
\begin{eqnarray*}
\mathbb{E}[\hat{M}_{n}^{(2)}(t)]  \leq C_{t} (\ln n)^{2}(b \mathbb{E}[V_{1}^{2}])^{m_{n}}  \leq C_{t}  (\ln n)^{ \beta \log_{b} (b\mathbb{E}[V_{1}^{2}]) +2 }.
\end{eqnarray*}

\noindent Since $\log_{b} (b\mathbb{E}[V_{1}^{2}]) < 0$, we deduce from the previous inequality that
\begin{eqnarray} \label{eq9}
 (\ln n)^{-1}\mathbb{E}[\hat{M}_{n}^{(2)}(t)] \rightarrow 0, \hspace*{4mm} \text{as} \hspace*{2mm} n \rightarrow \infty,
\end{eqnarray}
\noindent by choosing $\beta$ large enough. 

Note that 
\begin{eqnarray*}
\mathbb{E}[\hat{M}_{n}(t)] = \sum_{k \geq 1} b^{k} \mathbb{P}\left(n L_{k} \geq \frac{n}{t\ln n} \right) = \sum_{k \geq 1} b^{k} \mathbb{P}\left(S_{k} \leq \ln (t \ln n) \right).
\end{eqnarray*}
\noindent Holmgren \cite[Lemma 2.1]{Holm2012}  
has shown that under Condition \ref{Cond1} one has that
\begin{eqnarray} \label{Ceciresult}
\sum_{k \geq 1} b^{k} \mathbb{P}\left(S_{k} \leq s \right) = (\mu^{-1} + o(1))e^{s}, \hspace*{4mm} \text{as} \hspace*{2mm} s \rightarrow \infty.
\end{eqnarray}
\noindent  Then 
\begin{eqnarray} \label{eq10}
(\ln n)^{-1}\mathbb{E}[\hat{M}_{n}(t)] =  (\ln n)^{-1}\sum_{k \geq 1} b^{k} \mathbb{P}\left(S_{k} \leq \ln (t\ln n) \right) \rightarrow \mu^{-1}t, \hspace*{4mm} \text{as} \hspace*{2mm} n \rightarrow \infty.
\end{eqnarray}
\noindent The limits (\ref{eq9}) and (\ref{eq10}) show that
\begin{eqnarray} \label{eq20}
(\ln n)^{-1}\mathbb{E}[\hat{M}_{n}^{(1)}(t)] \rightarrow \mu^{-1}t, \hspace*{4mm} \text{as} \hspace*{2mm} n \rightarrow \infty.
\end{eqnarray}
\noindent On the other hand, Lemma \ref{lemma3} and the Cauchy–Schwarz inequality imply that
\begin{eqnarray} \label{eq11}
(\ln n)^{-1}\mathbb{E}[M_{n}^{(2)}(t)] \rightarrow 0, \hspace*{4mm} \text{as} \hspace*{2mm} n \rightarrow \infty.
\end{eqnarray}
\noindent Since $M_{n}(t) = M_{n}^{(1)}(t) + M_{n}^{(2)}(t)$, the combination of the limits (\ref{eq20}) and (\ref{eq11}) imply that it is enough to check that
\begin{eqnarray} \label{eq5}
\lim_{n \rightarrow \infty} (\ln n)^{-1} \left| \mathbb{E}[M_{n}^{(1)}(t)] - \mathbb{E}[\hat{M}_{n}^{(1)}(t)] \right| = 0
\end{eqnarray}
\noindent in order to complete our proof. 

In this direction, define
\begin{eqnarray*}
Y_{n}(t) \coloneqq \# \left\{ v \in T_{n}^{\rm sp}: 1 \leq d_{n}(v) \leq m_{n}  \hspace*{3mm} \text{and} \hspace*{3mm}  {\rm Bin}(n, \mathcal{L}_{v} ) + s_{1}d_{n}(v) \geq \frac{n}{t\ln n} \right\}, 
\end{eqnarray*}
\noindent and 
\begin{eqnarray*}
\tilde{Y}_{n}(t) \coloneqq \# \left\{ v \in T_{n}^{\rm sp}: 1 \leq d_{n}(v) \leq m_{n}  \hspace*{3mm} \text{and} \hspace*{3mm}  {\rm Bin}(n, \mathcal{L}_{v} ) - sd_{n}(v) \geq \frac{n}{t\ln n} \right\}.
\end{eqnarray*}
\noindent Note that the inequality (\ref{eq4}) implies that $\mathbb{E}[\tilde{Y}_{n}(t)] \leq \mathbb{E}[M_{n}^{(1)}(t)] \leq \mathbb{E}[Y_{n}(t)]$. Then to prove (\ref{eq5}), it is enough to check that the following two limits hold:
\begin{itemize}
\item[(i)] $(\ln n)^{-1} \left| \mathbb{E}[Y_{n}(t)]  - \mathbb{E}[\hat{M}_{n}^{(1)}(t)] \right| \rightarrow 0$, as $n \rightarrow \infty$, and
\item[(ii)] $(\ln n)^{-1} \left| \mathbb{E}[\tilde{Y}_{n}(t)]- \mathbb{E}[\hat{M}_{n}^{(1)}(t)] \right| \rightarrow 0$, as $n \rightarrow \infty$.
\end{itemize} 

We only prove (i) since the proof of (ii) is analogous. Note that 
\begin{align*}
\left| \mathbb{E}[Y_{n}(t)] - \mathbb{E}[\hat{M}_{n}^{(1)}(t)] \right| & = \left| \sum_{k = 1}^{m_{n}} b^{k} \mathbb{P}\left({\rm Bin}(n, L_{k}) + s_{1}k \geq \frac{n}{t\ln n} \right)  - \sum_{k = 1}^{m_{n}} b^{k} \mathbb{P}\left( L_{k} \geq \frac{1}{t\ln n} \right) \right| \\
& \leq  \sum_{k = 1}^{m_{n}} b^{k} \mathbb{P}\left({\rm Bin}(n, L_{k}) + s_{1}k \geq \frac{n}{t\ln n}, L_{k} < \frac{1}{t \ln n} \right) \\
& \hspace*{5mm} + \sum_{k = 1}^{m_{n}} b^{k} \mathbb{P}\left({\rm Bin}(n, L_{k}) + s_{1}k < \frac{n}{t\ln n}, L_{k} \geq \frac{1}{t \ln n} \right).
\end{align*}
\noindent Denote the first term on the right-hand side by $I_{n}^{(1)}$ and the second term by $I_{n}^{(2)}$. To prove (i) we first show that $I_{n}^{(1)} = o(\ln n)$ and then that $I_{n}^{(2)} = o(\ln n)$. For $\delta_{1} \in (0,1)$, we observe that
\begin{eqnarray} \label{eq14}
I_{n}^{(1)} \leq \sum_{k = 1}^{m_{n}} b^{k} \mathbb{P}\left({\rm Bin}(n, L_{k}) + s_{1}k \geq \frac{n}{t\ln n}, L_{k} < \frac{\delta_{1}}{t \ln n} \right) + \sum_{k \geq 1} b^{k} \mathbb{P}\left( \frac{\delta_{1}}{t \ln n} \leq L_{k} < \frac{1}{t \ln n} \right).
\end{eqnarray}
\noindent On the one hand, for $1 \leq k \leq m_{n}$ and $n$ large enough, there exists a constant $C_{1} >0$ (that depends on $\delta_{1}$) such that
\begin{eqnarray*} 
\mathbb{P}\left({\rm Bin}(n, L_{k}) + s_{1}k \geq \frac{n}{t\ln n}, L_{k}  < \frac{\delta_{1}}{t \ln n} \right) & \leq &   \mathbb{P}\left({\rm Bin}(n, \delta_{1}/t \ln n) \geq \frac{n}{t\ln n} - s_{1}m_{n}\right) \nonumber \\
 & = &  \mathbb{P}\left({\rm Bin}(n, \delta_{1}/t \ln n) -\frac{\delta_{1}n}{t\ln n} \geq \frac{(1-\delta_{1})n}{t\ln n} - s_{1}m_{n}\right) \nonumber \\
 & \leq & C_{1} (\ln n)/n;
\end{eqnarray*}
\noindent to obtain the last inequality we have used Chebyshev's inequality and that the variance of a $\text{Bin}(m,q)$ random variable is $mq(1-q)$. Hence, for any $\delta_{1} \in (0,1)$ there exists a constant $C_{1}>0$ (that depends on $\delta_{1}$) such that 
\begin{eqnarray} \label{eq12}
 \sum_{k = 1}^{m_{n}} b^{k} \mathbb{P}\left({\rm Bin}(n, L_{k}) + s_{1}k \geq \frac{n}{t\ln n}, L_{k} < \frac{\delta_{1}}{t \ln n} \right) \leq C_{1} b^{m_{n}}(\ln n)/n = o(1);
\end{eqnarray}
\noindent note that the $o(1)$ does not depend on $\delta_{1}$. On the other hand, (\ref{Ceciresult}) implies that \begin{eqnarray} \label{eq13}
\lim_{n \rightarrow \infty} \frac{1}{\ln n}\sum_{k \geq 1} b^{k} \mathbb{P}\left( \frac{\delta_{1}}{t \ln n} \leq L_{k} < \frac{1}{t \ln n} \right) & = & \lim_{n \rightarrow \infty} \frac{1}{\ln n}\sum_{k \geq 1} b^{k} \mathbb{P}\left(\ln (t \ln n) < S_{k} \leq  \ln(t \ln n) - \ln \delta_{1} \right) \nonumber \\
& = & (\delta_{1}^{-1}-1)\mu^{-1}t.
\end{eqnarray}
\noindent By combining (\ref{eq12}) and (\ref{eq13}) into (\ref{eq14}), we obtain that for any $\delta_{1} \in (0,1)$, $\limsup_{n \rightarrow \infty} I_{n}^{(1)}/\ln n = (\delta_{1}^{-1}-1)\mu^{-1}t$. Thus from the arbitrariness of $\delta_{1} \in (0,1)$, we deduce that $I_{n}^{(1)} = o(\ln n)$. We complete the proof of (i) by showing that 
$I_{n}^{(2)} = o(\ln n)$. For $\delta_{2} >1$, we observe that 
\begin{eqnarray} \label{eq15}
I_{n}^{(2)} \leq \sum_{k = 1}^{m_{n}} b^{k} \mathbb{P}\left({\rm Bin}(n, L_{k}) + s_{1}k < \frac{n}{t\ln n}, L_{k} \geq \frac{\delta_{2}}{t \ln n} \right) + \sum_{k \geq 1} b^{k} \mathbb{P}\left( \frac{1}{t \ln n} \leq L_{k} < \frac{\delta_{2}}{t \ln n} \right).
\end{eqnarray}
\noindent On the one hand, for $1 \leq k \leq m_{n}$ and $n$ large enough, Chebyshev's inequality implies that there exists a constant $C_{2} >0$ (that depends on $\delta_{2}$) such that
\begin{eqnarray*} 
\mathbb{P}\left({\rm Bin}(n, L_{k}) + s_{1}k < \frac{n}{t\ln n}, L_{k} \geq \frac{\delta_{2}}{t \ln n} \right) & \leq & \mathbb{P}\left({\rm Bin}(n, \delta_{2} /t \ln n )  < \frac{n}{t\ln n}-s_{1} \right) \nonumber \\
& = & \mathbb{P}\left(\frac{\delta_{2}n}{t \ln n} - {\rm Bin}(n, \delta_{2} /t \ln n )  > \frac{(\delta_{2}-1)n}{t\ln n}-s_{1}\right) \nonumber \\
& \leq & C_{2} (\ln n)/n.
\end{eqnarray*}
\noindent Hence,  for any $\delta_{2} >1$ there exists a constant $C_{2}>0$ (that depends on $\delta_{2}$) such that 
\begin{eqnarray} \label{eq16}
 \sum_{k = 1}^{m_{n}} b^{k} \mathbb{P}\left({\rm Bin}(n, L_{k}) + s_{1}k < \frac{n}{t\ln n}, L_{k} \geq \frac{\delta_{2}}{t \ln n} \right) \leq C_{2} b^{m_{n}}(\ln n)/n = o(1).
\end{eqnarray}
\noindent The $o(1)$ does not depend on $\delta_{2}$. On the other hand, by using (\ref{Ceciresult}), we obtain that for any $\delta_{2} >1$,
\begin{eqnarray} \label{eq17}
\lim_{n \rightarrow \infty} \frac{1}{\ln n}\sum_{k \geq 1} b^{k} \mathbb{P}\left( \frac{1}{t \ln n} \leq L_{k} < \frac{\delta_{2}}{t \ln n} \right) = (1-\delta_{2}^{-1})\mu^{-1}t.
\end{eqnarray}
\noindent By combining (\ref{eq16}) and (\ref{eq17}) into (\ref{eq15}), we obtain that $\limsup_{n \rightarrow \infty} I_{n}^{(2)}/\ln n = (1-\delta_{2}^{-1})\mu^{-1}t$ and thus, the arbitrariness of $\delta_{2} > 1$ implies that $I_{n}^{(2)} = o(\ln n)$. This finishes the proof of (i).
\end{proof} 

\begin{lemma} \label{lemma5}
Suppose that Condition \ref{Cond1} holds. Then, for every fixed $t \in [0, \infty)$, we have that 
$(\ln n)^{-2}Var(M_{n}(t)) \rightarrow 0$  as $n \rightarrow \infty$. 
\end{lemma}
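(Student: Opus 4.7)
The plan is to reduce to estimating $\mathrm{Var}(M_n^{(1)}(t))$ and then carry out a second-moment calculation on the proxy $\hat M_n^{(1)}(t)$ introduced in the proof of Lemma \ref{lemma4}, where an exact leading-order cancellation forced by $\sum_{i=1}^{b} V_i = 1$ will kill the variance.

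By $\mathrm{Var}(X+Y) \leq 2\mathrm{Var}(X) + 2\mathrm{Var}(Y)$ together with Lemma \ref{lemma3} (which gives $\mathbb{E}[(M_n^{(2)}(t))^2] = o((\ln n)^2)$), it suffices to prove $\mathrm{Var}(M_n^{(1)}(t)) = o((\ln n)^2)$. A second-moment analog of the Binomial comparison in Lemma \ref{lemma4} (the $I_n^{(1)}, I_n^{(2)}$-type splitting combined with the two-sided bounds (\ref{eq4})) would give $\mathbb{E}[(M_n^{(1)}(t) - \hat M_n^{(1)}(t))^2] = o((\ln n)^2)$, reducing the task to $\mathrm{Var}(\hat M_n^{(1)}(t)) = o((\ln n)^2)$. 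I would then expand $\mathbb{E}[(\hat M_n^{(1)}(t))^2]$ as a double sum over ordered pairs $(v, w) \in \mathbb{T}^2$ at depths in $[1, m_n]$, partitioned by the depth $j$ of the most recent common ancestor (MRCA) of $v$ and $w$. The diagonal ($v = w$) and ancestor/descendant cases contribute at most $O(m_n \ln n) = o((\ln n)^2)$. For the main case (disjoint MRCA-subtrees), decompose
\begin{equation*}
\mathcal{L}_v = L_j \cdot V_a^{\mathrm{MRCA}} \cdot L^{v*}, \qquad \mathcal{L}_w = L_j \cdot V_b^{\mathrm{MRCA}} \cdot L^{w*},
\end{equation*}
where $L_j$ is the shared prefix product, $(V_a^{\mathrm{MRCA}}, V_b^{\mathrm{MRCA}})$ with $a \neq b$ is a pair of distinct coordinates of the MRCA's split vector (jointly distributed as $(V_1, V_2)$ by exchangeability), and $L^{v*}, L^{w*}$ are independent products of i.i.d.\ copies of $V_1$. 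Summing over the $(b-1) b^{k+k'-j-1}$ ordered pairs with MRCA at depth $j$ and applying the asymptotic (\ref{Ceciresult}) to each of the two inner sums produces the leading-order contribution
\begin{equation*}
(b-1)\, b\, \mathbb{E}[V_1 V_2] \,(\mu^{-1} t \ln n)^2 \sum_{j \geq 0}(b \mathbb{E}[V_1^2])^j.
\end{equation*}

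The decisive identity $b\mathbb{E}[V_1^2] + b(b-1)\mathbb{E}[V_1 V_2] = \mathbb{E}\bigl[(\sum_{i} V_i)^2\bigr] = 1$ (forced by $\sum_i V_i = 1$ and exchangeability) yields $(b-1)b\mathbb{E}[V_1 V_2] = 1 - b\mathbb{E}[V_1^2]$, which cancels exactly against the geometric sum $\sum_{j \geq 0}(b\mathbb{E}[V_1^2])^j = 1/(1 - b\mathbb{E}[V_1^2])$ (convergent since $b\mathbb{E}[V_1^2] < 1$ under Condition \ref{Cond1}). Thus the main contribution collapses to exactly $(\mu^{-1} t \ln n)^2$, matching $(\mathbb{E}[\hat M_n^{(1)}(t)])^2$ at leading order by Lemma \ref{lemma4}, and $\mathrm{Var}(\hat M_n^{(1)}(t)) = o((\ln n)^2)$ follows.

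The main obstacle is turning this formal cancellation into rigorous $o((\ln n)^2)$ bounds uniformly in $j \in \{0, \dots, m_n-1\}$. The asymptotic (\ref{Ceciresult}) is applied with argument $1/(t L_j V_a^{\mathrm{MRCA}} \ln n)$, which fails to be small on the event that $L_j$ is atypically tiny, a concern especially when $j$ approaches $m_n = \lfloor \beta \log_b \ln n \rfloor$. Mirroring the proof of Lemma \ref{lemma4}, one has to split the inner sums at thresholds $\delta_1/(t \ln n)$ and $\delta_2/(t \ln n)$ for small $\delta_1 < 1 < \delta_2$, and invoke $b\mathbb{E}[V_1^2] < 1$ with $\beta$ chosen large enough to absorb the tail contributions; the second-moment comparison between $M_n^{(1)}$ and $\hat M_n^{(1)}$ in the first step requires a parallel, slightly more delicate, version of the same $\delta_1, \delta_2$-argument.
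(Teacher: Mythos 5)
Your proposal is correct, but it follows a genuinely different route from the paper's. You reduce to showing $Var(\hat M_n^{(1)}(t)) = o((\ln n)^2)$ and then expand the second moment over ordered pairs $(v,w)$ classified by the depth $j$ of their MRCA; the variance then collapses through the algebraic identity $b(b-1)\mathbb{E}[V_1V_2] = 1 - b\mathbb{E}[V_1^2]$, which is forced by $\mathbb{E}\bigl[\bigl(\sum_i V_i\bigr)^2\bigr] = 1$. The paper instead conditions on the $\sigma$-algebra $\Omega_{\theta_n}$ generated by the subtree sizes at a shallow level $\theta_n = \lfloor c\log_b\ln n\rfloor$ (with $c$ small), uses the decomposition $Var(M_n(t)) = \mathbb{E}[Var(M_n(t)\mid\Omega_{\theta_n})] + Var(\mathbb{E}[M_n(t)\mid\Omega_{\theta_n}])$, controls the conditional variance via the conditional independence of the subtrees hanging at level $\theta_n$ together with a pigeonhole bound, and kills the variance of the conditional mean because $\mathbb{E}[M_n(t)\mid\Omega_{\theta_n}] \approx \mu^{-1}t(\ln n)\sum_i n_i/n$ is nearly deterministic, since $\sum_i n_i \approx n$. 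Both arguments are ultimately driven by $\sum_i V_i = 1$ --- yours via the moment identity, the paper's via the ball-count identity. The paper's conditioning sidesteps the two technical points you flag (no second-moment analogue of the Binomial-vs-$\mathcal{L}_v$ comparison, and the renewal asymptotic (\ref{Ceciresult}) enters only through Lemma~\ref{lemma4} rather than at a random argument $t(\ln n)L_jV_a$ inside a pair sum); your route keeps the source of the cancellation fully explicit and avoids feeding Lemma~\ref{lemma4} into a random number of subtrees $T_i$. The gaps you note are real but closeable: the $\delta_1,\delta_2$-truncation handles small $L_jV_a$ because the pre-factor $(b\mathbb{E}[V_1^2])^j$ already damps large $j$, and the second-moment comparison between $M_n^{(1)}$ and $\hat M_n^{(1)}$ can be finished by splitting the symmetric difference into a near-threshold event $\{\mathcal{L}_v \in [\delta_1/(t\ln n), \delta_2/(t\ln n))\}$ (handled by the same MRCA expansion, with $\delta_1,\delta_2 \to 1$) and an atypical-Binomial event of probability $O(\ln n/n)$ per vertex, which a crude union bound over the $O((\ln n)^\beta)$ vertices at depth at most $m_n$ absorbs.
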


\begin{proof}
Write $\theta_{n} = \lfloor c \log_{b} \ln n \rfloor$, for some constant $c >0$. Fix an arbitrary small $\varepsilon >0$ 
and choose $c >0$ small enough such that  $\# \left \{ v \in T_{n}: 1 \leq d_{n}(v) < \theta_{n}  \right \} = o(\ln^{\varepsilon}n)$. (For our purpose it is enough to choose $0 < \varepsilon < 1$.) This implies that 
\begin{eqnarray} \label{eq18}
Z_{\theta_{n}} = \# \left \{ v \in T_{n}: 1 \leq d_{n}(v) < \theta_{n} \hspace*{2mm} \text{and} \hspace*{2mm} n_{v} \geq \frac{nt}{\ln n}  \right \} = o(\ln^{\varepsilon}n).
\end{eqnarray}
\noindent Let $\Omega_{\theta_{n}}$ be the $\sigma$-algebra generated by $(n_{v}: 1 \leq d_{n}(v) \leq \theta_{n})$. Note that
\begin{eqnarray*}
Var(M_{n}(t)) = \mathbb{E}[Var(M_{n}(t) | \Omega_{\theta_{n}})] + Var(\mathbb{E}[M_{n}(t) | \Omega_{\theta_{n}}]);
\end{eqnarray*}
\noindent see for instance \cite[Problem 2, Chapter 10]{Gut2013} for the variance formula. Then Lemma \ref{lemma5} follows by showing
\begin{itemize}
\item[(i)] $(\ln n)^{-2}\mathbb{E}[Var(M_{n}(t) | \Omega_{\theta_{n}})] \rightarrow 0$ as $n \rightarrow \infty$, and
\item[(ii)] $(\ln n)^{-2} Var(\mathbb{E}[M_{n}(t) | \Omega_{\theta_{n}}]) \rightarrow 0$ as $n \rightarrow \infty$.
\end{itemize}

We start with the proof of (i). For $1 \leq i \leq b^{\theta_{n}}$, let $T_{i}$ be the sub-tree of $T_{n}^{\rm sp}$ rooted at the vertex $v_{i}$ at height $\theta_{n}$ and let $n_{i}$ be number of balls stored in $T_{i}$. Recall that we write 
$m_{n} =  \lfloor \beta \log_{b} \ln n \rfloor$ for some large constant $\beta>0$. For every $t \geq 0$, we define
\begin{eqnarray*}
X_{i} \coloneqq \# \left \{ v \in T_{i}:  n_{v} \geq \frac{n}{t\ln n}  \right \} \hspace*{3mm} \text{and} \hspace*{3mm} X_{i}^{\rm c} \coloneqq \# \left \{ v \in T_{i}: d_{n}(v) \leq m_{n} \hspace*{2mm} \text{and} \hspace*{2mm} n_{v} \geq \frac{n}{t\ln n}  \right \}.
\end{eqnarray*}
\noindent Observe that we can write $M_{n}(t)=\sum_{i=1}^{b^{\theta_{n}}} X_{i} + Z_{\theta_{n}}$ and that conditioned on $\Omega_{\theta_{n}}$, $(X_{i}, 1 \leq i \leq b^{m_{n}})$ is a collection of independent random variables. Thus, it follows that
\begin{eqnarray*}
Var(M_{n}(t) \vert \Omega_{\theta_{n}}) & = & Var \left( \sum_{i=1}^{b^{\theta_{n}}} X_{i} + Z_{\theta_{n}} \Big \vert \Omega_{\theta_{n}} \right) = \sum_{i=1}^{b^{\theta_{n}}} Var(X_{i} \vert \Omega_{\theta_{n}}) \leq \sum_{i=1}^{b^{\theta_{n}}} \mathbb{E}[X_{i}^{2}\vert \Omega_{\theta_{n}} ]\\
& \leq & 2\sum_{i=1}^{b^{\theta_{n}}} \mathbb{E}[(X_{i}^{\rm c})^{2}\vert \Omega_{\theta_{n}} ] + 2\sum_{i=1}^{b^{\theta_{n}}} \mathbb{E}[(X_{i} - X_{i}^{\rm c})^{2} \vert \Omega_{\theta_{n}} ];
\end{eqnarray*}
\noindent we have used the inequality $(x+y)^{2} \leq 2x^{2} + 2y^{2}$, for $x,y \geq 0$, to obtain the last line. By the Pigeonhole principle (using that $T_{i}$ stores $n_{i}$ balls), note that for each level $\theta_{n} + j$, $j \geq 0$, we have that
\begin{eqnarray*}
\# \left \{ v \in T_{i}: d_{n}(v) =\theta_{n} + j \hspace*{2mm} \text{and} \hspace*{2mm} n_{v} \geq \frac{nt}{\ln n}  \right \} \leq \frac{n_{i}}{nt} \ln n.
\end{eqnarray*}
\noindent This implies, by recalling $\theta_{n} = \lfloor c \log_{b} \ln n \rfloor$, that there exists a constant $C>0$ (depending on $t$) such that $X_{i}^{\rm c} \leq C (n_{i}/n)(\ln n)\ln \ln n$. Then
\begin{eqnarray*}
Var(M_{n}(t) \vert \Omega_{\theta_{n}})  \leq  2C^{2} \sum_{i=1}^{b^{\theta_{n}}}  \left( \frac{n_{i} \ln n}{n} \ln \ln n \right)^{2}  + 2\sum_{i=1}^{b^{\theta_{n}}} \mathbb{E}[(X_{i} - X_{i}^{\rm c})^{2} \vert \Omega_{\theta_{n}} ].
\end{eqnarray*}
\noindent By taking expectation and applying the fact that $X_{i}-X_{i}^{\rm c} \leq M_{n}^{(2)}(t)$, Lemma \ref{lemma3} implies that 
\begin{eqnarray*}
\mathbb{E} [Var(M_{n}(t) \vert \Omega_{\theta_{n}})] \leq  C^{2} \left( \frac{\ln n}{n} \ln \ln n \right)^{2} \sum_{i=1}^{b^{\theta_{n}}}  \mathbb{E} [n_{i}^{2}]  + o(\ln^{2}n).
\end{eqnarray*}
\noindent From the inequality (\ref{eq19}), note that  $\mathbb{E} [n_{i}^{2}] \leq n^{2} (\mathbb{E}[V_{1}^{2}])^{\theta_{n}} + O(nb^{-\theta_{n}}\theta_{n}) + O(\theta_{n}^{2})$. By combining the previous two inequalities, we see that there is a constant $C^{\prime} >0$ such that 
\begin{eqnarray*}
\mathbb{E} [Var(M_{n}(t) \vert \Omega_{\theta_{n}})] & \leq &  C^{\prime} (\ln n)^{2} (\ln \ln n)^{2} (b\mathbb{E}[V_{1}^{2}])^{c\log_{b}\ln n}  + o(\ln^{2}n) \\
& = & C^{\prime} (\ln n)^{ c\log_{b}( b\mathbb{E}[V_{1}^{2}] ) +2} (\ln \ln n)^{2}  + o(\ln^{2}n).
\end{eqnarray*}
\noindent Since $\mathbb{E}[V_{1}^{2}] < 1/b < 1$, we see that $\log_{b}( b\mathbb{E}[V_{1}^{2}] )<0$. It then follows that $\mathbb{E} [Var(M_{n}(t) \vert \Omega_{\theta_{n}})] = o(\ln^{2}n)$ which shows (i).

Next, we prove point (ii). Recall that conditioned on $\Omega_{\theta_{n}}$, $(X_{i}, 1 \leq i \leq b^{m_{n}})$ is a sequence of independent random variables. Moreover, for $1 \leq i \leq b^{m_{n}}$, the sub-tree $T_{i}$ is a split tree with $n_{i}$ balls. Then Lemma \ref{lemma4} implies that
\begin{eqnarray*}
\mathbb{E}[X_{i} \vert \Omega_{\theta_{n}}] = \mu^{-1}t  \frac{n_{i}}{n} \ln n + o \left(\frac{n_{i} \ln n}{n} \right), \hspace*{4mm} \text{for} \hspace*{2mm} 1 \leq i \leq b^{\theta_{n}}.
\end{eqnarray*}
\noindent Thus,
\begin{eqnarray*}
\mathbb{E}[M_{n}(t) \vert \Omega_{\theta_{n}}]  =  \mathbb{E} \left[ \sum_{i=1}^{b^{\theta_{n}}} X_{i} + Z_{\theta_{n}} \Big \vert \Omega_{\theta_{n}} \right] = \sum_{i=1}^{b^{\theta_{n}}} \mu^{-1}t \frac{n_{i}}{n}\ln n +  o \left(\frac{n_{i} \ln n}{n} \right) + \mathbb{E}[Z_{\theta_{n}} \vert \Omega_{\theta_{n}}].
\end{eqnarray*}
\noindent By our choice of $c>0$ in (\ref{eq18}), we see that $Var(\mathbb{E}[M_{n}(t) \vert \Omega_{\theta_{n}}]) = Var(\mu^{-1}t\ln n+ o(\ln n)) = o(\ln^{2}n)$. This proves (ii) and concludes the proof of Lemma \ref{lemma5}. 
\end{proof}

Finally, we can now easily prove Proposition \ref{lemma1}.

\begin{proof}[Proof of Proposition \ref{lemma1}]
This is a simple consequence of Lemma \ref{lemma4}, Lemma \ref{lemma5} and Chebyshev's inequality.
\end{proof}

\section{Proof of Theorem \ref{Teo4}}  \label{LatticeCase}

In this section, we prove Theorem \ref{Teo4} which follows from a simple adaptation of the proof of Theorem \ref{Teo1}. Therefore, we only provide enough details to convince the reader that the proof can be carried out as in Theorem \ref{Teo1}. Indeed, we just need a lattice version of Proposition \ref{lemma1}.  

For $t \in [0, \infty)$, recall the definition of $M_{n}(t)$ in (\ref{MM}).  For every $\varrho \in [0,1)$ and $x \geq 0$, consider the function $\varphi(x) = 1/x$ and define the measure $\Xi_{\varrho}^{\ast}$ by $\Xi_{\varrho}^{\ast}(A) = \Xi_{\varrho} (\varphi^{-1}(A))$, for all measurable subsets $A \subset \mathbb{R}_{+}$. 

\begin{proposition} \label{ExtraPro1}
Let $T_{n}^{\rm sp}$ be a split tree such that $\mathbb{P}(V_{1} = 1) = \mathbb{P}(V_{1} = 0) = 0$ and that $-\ln V_{1}$ is lattice with span $a>0$. Suppose also that $p_{n}$ fulfills (\ref{eq1}) and that $n \rightarrow \infty$ such that $\{a^{-1} \ln \ln n\} \rightarrow \varrho \in [0,1)$. Then, for every fixed $t \in [0, \infty)$, we have that
$ (\ln n)^{-1}M_{n}(t) \overset{\mathbb{P}}{\longrightarrow} \mu^{-1}\Xi_{\varrho}^{\ast}((0,t])$, as $n \rightarrow \infty$.
\end{proposition}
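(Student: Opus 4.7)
The plan is to follow the structure of the proof of Proposition~\ref{lemma1}, replacing the non-lattice renewal asymptotic (\ref{Ceciresult}) by its lattice analogue. Since $b\mathbb{E}[V_{1}]=1$, exponential tilting turns $\hat{\mathbb{P}}(-\ln V_{1}\in{\rm d}u)\coloneqq b\hspace{0.5pt}e^{-u}\mathbb{P}(-\ln V_{1}\in{\rm d}u)$ into a probability distribution supported on $a\mathbb{N}_{0}$, with mean $b\mathbb{E}[-V_{1}\ln V_{1}]=\mu$. Blackwell's renewal theorem in the lattice case therefore gives
\[
\sum_{k\geq 1}b^{k}\mathbb{P}(S_{k}=aj)=e^{aj}\sum_{k\geq 1}\hat{\mathbb{P}}(S_{k}=aj)\;\sim\;\frac{a}{\mu}\,e^{aj},\qquad j\to\infty,
\]
and summing the geometric tail,
\[
\sum_{k\geq 1}b^{k}\mathbb{P}(S_{k}\leq s)\;\sim\;\frac{a}{\mu(1-e^{-a})}\,e^{a\lfloor s/a\rfloor},\qquad s\to\infty.
\]

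Substituting $s=\ln(t\ln n)$, the identity $a\lfloor a^{-1}\ln\ln n\rfloor=\ln\ln n-a\{a^{-1}\ln\ln n\}$ combined with the hypothesis $\{a^{-1}\ln\ln n\}\to\varrho$ yields, at every continuity point $t$ of $\Xi_{\varrho}^{\ast}$ (i.e., $\varrho+a^{-1}\ln t\notin\mathbb{Z}$),
\[
(\ln n)^{-1}e^{a\lfloor a^{-1}\ln(t\ln n)\rfloor}\;\longrightarrow\; e^{-a\varrho}e^{a\lfloor\varrho+a^{-1}\ln t\rfloor}=\frac{1-e^{-a}}{a}\,\Xi_{\varrho}^{\ast}((0,t]),
\]
which is the lattice analogue of (\ref{eq10}) and implies $(\ln n)^{-1}\mathbb{E}[\hat{M}_{n}(t)]\to\mu^{-1}\Xi_{\varrho}^{\ast}((0,t])$. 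The tail bound (\ref{eq9}) for $\hat{M}_{n}^{(2)}(t)$ uses only $\mathbb{E}[V_{1}^{2}]<1/b$ and carries over verbatim; the binomial sandwich step passing from $\hat{M}_{n}^{(1)}$ to $M_{n}^{(1)}$ used in Lemma~\ref{lemma4} also runs identically, except that the quantities $(\delta_{1}^{-1}-1)\mu^{-1}t$ from (\ref{eq13}) and $(1-\delta_{2}^{-1})\mu^{-1}t$ from (\ref{eq17}) are replaced by $\mu^{-1}[\Xi_{\varrho}^{\ast}((0,\delta_{1}^{-1}t])-\Xi_{\varrho}^{\ast}((0,t])]$ and $\mu^{-1}[\Xi_{\varrho}^{\ast}((0,t])-\Xi_{\varrho}^{\ast}((0,\delta_{2}^{-1}t])]$, both of which vanish as $\delta_{j}\to 1$ whenever $t$ is a continuity point of $\Xi_{\varrho}^{\ast}$.

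For the variance, Lemma~\ref{lemma5} transfers with only cosmetic changes: its argument rests solely on the geometric estimate $(b\mathbb{E}[V_{1}^{2}])^{k}$ from Lemma~\ref{lemma3} (which never used the non-lattice assumption) and on the first-moment asymptotic applied to the sub-trees $T_{i}$ rooted at depth $\theta_{n}$, now with limit $\mu^{-1}\Xi_{\varrho}^{\ast}((0,t])\ln n$ in place of $\mu^{-1}t\ln n$. A final application of Chebyshev's inequality converts the first-moment and variance bounds into the stated convergence in probability.

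The main obstacle I expect is handling the discontinuity set $\{e^{aj-a\varrho}:j\in\mathbb{Z}\}$ of $t\mapsto\Xi_{\varrho}^{\ast}((0,t])$, where the floor function undergoes jumps. Away from this set the $\delta_{j}\to 1$ sandwich produces the correct limit immediately; at a jump point one must exploit the monotonicity of $t\mapsto M_{n}(t)$ together with the right-continuity of $t\mapsto\Xi_{\varrho}^{\ast}((0,t])$ to pin down the correct value, which is a routine but delicate limit argument.
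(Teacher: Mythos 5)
Your proposal takes essentially the same route as the paper: establish a lattice version of the renewal asymptotic (\ref{Ceciresult}), substitute $s=\ln(t\ln n)$, and carry over the decomposition and moment estimates of Lemmas \ref{lemma3}--\ref{lemma5} unchanged. The only real difference is that the paper cites the lattice renewal estimate from \cite[Lemma 1]{Ber2019}, whereas you derive it directly by exponential tilting ($\hat{\mathbb{P}}(-\ln V_{1}\in{\rm d}u)=b\,e^{-u}\mathbb{P}(-\ln V_{1}\in{\rm d}u)$ has total mass $b\mathbb{E}[V_{1}]=1$ and mean $\mu$) followed by Blackwell's theorem; your arithmetic identifying the limit with $\tfrac{1-e^{-a}}{a}\Xi_{\varrho}^{\ast}((0,t])=e^{-a\varrho}e^{a\lfloor\varrho+a^{-1}\ln t\rfloor}$ checks out. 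Your closing remark about the discontinuity set of $t\mapsto\Xi_{\varrho}^{\ast}((0,t])$ is a genuine subtlety that the paper leaves to the reader: at $t$ with $\varrho+a^{-1}\ln t\in\mathbb{Z}$ the asserted convergence can fail if $\{a^{-1}\ln\ln n\}\to\varrho$ is not one-sided, but restricting to continuity points (which suffices for the finite-dimensional convergence in Theorem \ref{Teo4}) or using the monotonicity/right-continuity argument you sketch resolves it.
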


\begin{proof}
A close inspection of the proof of Proposition \ref{lemma1} shows that we only need a lattice version of the result in (\ref{Ceciresult}). Let $W_{i}$, $i \geq 1$ be i.i.d.\ copies of $V_{1}$, and define $L_{k} = \prod_{i=1}^{k} W_{i}$ and $S_{k} = - \ln L_{k}$ for $k \in \mathbb{N}$. Since $-\ln V_{1}$ is lattice with span $a >0$, it follows from \cite[Lemma 1]{Ber2019} that
\begin{eqnarray*} 
\sum_{k \geq 1} b^{k} \mathbb{P}\left(S_{k} \leq a \lfloor s \rfloor \right) = \left(\frac{a}{\mu} \frac{1}{1-e^{-a}} + o(1) \right)e^{ a \lfloor s \rfloor}, \hspace*{3mm} \text{as} \hspace*{2mm} s \rightarrow \infty.
\end{eqnarray*}

\noindent In particular, for $t \in [0, \infty)$,
\begin{eqnarray*}
(\ln n)^{-1}\sum_{k \geq 1} b^{k} \mathbb{P}\left(S_{k} \leq \ln (t\ln n) \right) = (\ln n)^{-1}\sum_{k \geq 1} b^{k} \mathbb{P}\left(S_{k} \leq  a\lfloor a^{-1} \ln (t \ln n) \rfloor \right)  \rightarrow  \mu^{-1}\Xi_{\varrho}^{\ast}((0,t]), 
\end{eqnarray*}

\noindent as $n \rightarrow \infty$, which corresponds to (\ref{eq10}) in the proof of Proposition \ref{lemma1}. Therefore, Proposition \ref{ExtraPro1} follows along the lines of the proof of Proposition \ref{lemma1}. Details are left to the reader.
\end{proof}

\begin{proof}[Proof of Theorem \ref{Teo4}]
In \cite[Lemma 2]{Ber2019}, we have proved the first claim in Theorem \ref{Teo4}.
The second claim follows from the same argument as in the proof of the second part of Theorem \ref{Teo1} by using Proposition \ref{ExtraPro1}. We leave the details to the interested reader. 
\end{proof}

\section{Proof of Theorem \ref{Teo3}} \label{sec5}
 
In this section, we point out that the approach used in the proof of Theorem \ref{Teo1} can also be applied to study percolation on complete $d$-regular trees to prove Theorem \ref{Teo3}. Recall that we consider a rooted complete regular $d$-ary tree $T_{h}^{\rm d}$ of height $h \in \mathbb{N}$, where $d \geq 2$ is some integer, 
and perform Bernoulli  bond-percolation with parameter $q_{h}$ fulfilling (\ref{per}). Recall also that $\log_{d} x = \ln x/\ln d$ denotes the logarithm with base $d$ of $x>0$.

For $t \in [0, \infty)$, let $H_{h}(t)$ be the number of edges at height less or equal to $\lfloor \log_{d}ht \rfloor$ that has been removed. More precisely, let $\tau_{i}(h)$ be the $i$-th smallest height at which an edge has been removed. Then
\begin{eqnarray*}
H_{h}(t) \coloneqq \sum_{i \geq 1} \mathds{1}_{\{\tau_{i}(h) \leq \log_{d}ht \}} =  \sum_{i \geq 1} \mathds{1}_{\{h^{-1}d^{\tau_{i}(h)} \leq t \}}.
\end{eqnarray*}
\noindent For every $\rho \in [0,1)$ and $x \geq 0$, consider the function $\varphi(x) = 1/x$ and define the measure $\Lambda_{\rho}^{\ast}$ by $\Lambda_{\rho}^{\ast}(A) = \Lambda_{\rho} (\varphi^{-1}(A))$  for all measurable subsets $A \subset \mathbb{R}_{+}$. In particular, 
\begin{eqnarray*}
\Lambda_{\rho}^{\ast}((0,x]) = \Lambda_{\rho}([1/x, \infty)) =  d^{-\rho +\lfloor \rho +\log_{d}x \rfloor +1}/(d-1).
\end{eqnarray*}

\begin{proposition} \label{Pro4}
Suppose that $\{ \log_{d}h \} \rightarrow \rho \in [0,1)$ as $h \rightarrow \infty$ and that $q_{h}$ fulfills (\ref{per}). Then, the following convergence holds in the sense of weak convergence of finite dimensional distributions,
\begin{eqnarray*}
(H_{h}(t), t \geq 0) \overset{d}{\longrightarrow} (H(t), t \geq 0),\hspace*{5mm} \text{as} \hspace*{2mm} h \rightarrow \infty,
\end{eqnarray*}
\noindent where $(H(t), t \geq 0)$ is a Poisson process with intensity $c\Lambda_{\rho}^{\ast}({\rm d}x)$.
\end{proposition}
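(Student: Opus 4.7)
The approach I would take is to observe that $H_h(t)$ has a clean binomial structure and then apply a Poisson limit theorem on top of it. By construction, $H_h(t)$ counts the removed edges among those sitting at height at most $m_h(t) := \lfloor \log_d(ht) \rfloor$. Since the total number of edges at height at most $m_h(t)$ is $d(d^{m_h(t)}-1)/(d-1)$, and each such edge is deleted independently with probability $1-q_h$,
\[
H_h(t) \stackrel{d}{=} \mathrm{Bin}\bigl(d(d^{m_h(t)}-1)/(d-1),\, 1-q_h\bigr).
\]
Moreover, because the edges at disjoint integer heights are disjoint, for any $0 \leq t_0 < t_1 < \cdots < t_k$ the random variables $H_h(t_j) - H_h(t_{j-1})$ are mutually independent binomials, built out of the edges with heights in $(m_h(t_{j-1}), m_h(t_j)]$. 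Thus at every finite $h$ the process already has the independent-increments structure that we need in the limit.

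The core of the argument is then to compute the asymptotic mean of each increment. Writing $\log_d h = \lfloor \log_d h \rfloor + \{\log_d h\}$ with $\{\log_d h\} \to \rho$, a direct computation gives
\[
\frac{d^{m_h(t)}}{h} \;=\; d^{\lfloor \{\log_d h\} + \log_d t\rfloor - \{\log_d h\}}
\;\longrightarrow\; d^{\lfloor \rho + \log_d t \rfloor - \rho}
\]
for every $t > 0$ with $\rho + \log_d t \notin \mathbb{Z}$, at which points the outer floor is locally constant and the convergence is immediate. Combined with $h(1-q_h) \to c$ from (\ref{per}), this yields
\[
\mathbb{E}[H_h(t)] \;\longrightarrow\; \frac{c\, d^{1 + \lfloor \rho + \log_d t\rfloor - \rho}}{d-1} \;=\; c\,\Lambda_\rho^{\ast}((0,t]),
\]
and analogously $\mathbb{E}[H_h(t_j) - H_h(t_{j-1})] \to c\bigl(\Lambda_\rho^{\ast}((0,t_j]) - \Lambda_\rho^{\ast}((0,t_{j-1}])\bigr)$, provided the $t_j$'s avoid the countable exceptional set $\{d^{j-\rho}: j \in \mathbb{Z}\}$ of discontinuities of $\Lambda_\rho^{\ast}$.

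Finally, since the success probability of each binomial increment tends to $0$ as $O(1/h)$ while the number of trials diverges and their product has a finite positive limit, the classical law of rare events gives Poisson convergence of each increment. Combined with the exact independence of the increments from Step 1, this identifies the finite-dimensional limits of $(H_h(t))_{t\geq 0}$ with those of a Poisson process of intensity $c\Lambda_\rho^{\ast}({\rm d}x)$, which is exactly the claim. The only mild subtlety I anticipate is the set $\{t = d^{j-\rho}: j \in \mathbb{Z}\}$ at which $\Lambda_\rho^{\ast}$ carries atoms and the floor computation is discontinuous, but these form a countable set that is irrelevant for finite-dimensional weak convergence (it suffices to test at continuity points of the limit), so no serious obstacle arises.
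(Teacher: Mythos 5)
Your proposal is correct and takes essentially the same approach as the paper: both identify $H_h(t)$ and its increments as independent binomials counting removed edges in disjoint height ranges, then compute the asymptotic means via the decomposition $\log_d h = \lfloor \log_d h\rfloor + \{\log_d h\}$ together with $h(1-q_h)\to c$, and conclude by the law of rare events. Your additional remark about the countable set of discontinuity points of $\Lambda_\rho^\ast$ is a legitimate refinement that the paper leaves implicit, but it does not change the substance of the argument.
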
 

\begin{proof}
Note that $(H_{h}(t), t \geq 0)$ has independent increments; since $H_{h}(t) - H_{h}(s)$ is the number of edges removed between height $\lfloor\log_{d}(hs) \rfloor$ and $\lfloor\log_{d}(ht) \rfloor$, for $0 \leq s \leq t$. Furthermore, 
\begin{eqnarray*}
H_{h}(t) - H_{h}(s) \stackrel{d}{=} {\rm Bin} \left((d^{\lfloor\log_{d}(ht) \rfloor +1 } - d^{\lfloor\log_{d}(hs) \rfloor +1})/(d-1), 1-q_{h}  \right), 
\end{eqnarray*}
\noindent On the one hand, $1-q_{h} \rightarrow 0$ as $h \rightarrow \infty$. On the other hand, 
\begin{eqnarray*}
(1-q_{h}) \frac{d^{\lfloor\log_{d}(ht) \rfloor +1}  - d^{\lfloor\log_{d}(hs) \rfloor +1}}{d-1} = (c+o(1)) \frac{d^{-\{ \log_{d} h\} + \lfloor \{ \log_{d} h\} + \log_{d} t \rfloor +1}  - d^{-\{ \log_{d} h\} +\lfloor \{\log_{d}h \} + \log_{d} s \rfloor +1}}{d-1}.
\end{eqnarray*}
\noindent Then, $H_{h}(t) - H_{h}(s) \overset{d}{\longrightarrow} {\rm  Poisson}(c\Lambda_{\rho}^{\ast}((s,t]))$, as $h \rightarrow \infty$, which clearly implies our claim. 
\end{proof}

\begin{corollary} \label{cor3}
Suppose that $\{ \log_{d}h \} \rightarrow \rho \in [0,1)$ as $h \rightarrow \infty$ and that $q_{h}$ fulfills (\ref{per}). Then, for every fixed $i \in \mathbb{N}$, 
\begin{eqnarray*}
( hd^{-\tau_{1}(h)}, \dots, hd^{-\tau_{i}(h)} ) \overset{d}{\longrightarrow} ({\rm x}_{1}, \dots, {\rm x}_{i}), \hspace*{5mm} \text{as} \hspace*{2mm} h \rightarrow \infty,
\end{eqnarray*}
\noindent where ${\rm x}_{1} \geq {\rm x}_{2} \geq \cdots$ are the atoms of a Poisson process on $(0, \infty)$ with intensity $c \Lambda_{\rho}({\rm d} x)$. 
\end{corollary}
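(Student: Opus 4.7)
The plan is to mirror the route used for Corollary \ref{cor1}: pass from the weak convergence of the counting process $(H_h(t), t \geq 0)$ supplied by Proposition \ref{Pro4} to the joint convergence of its first $i$ atoms, and then invert.

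By definition $h^{-1}d^{\tau_1(h)} \leq h^{-1}d^{\tau_2(h)} \leq \cdots$ are the occurrence times of $(H_h(t), t \geq 0)$ ranked in increasing order. The key elementary observation is that, for any $0 < t_1 \leq \cdots \leq t_i$,
\begin{eqnarray*}
\{ h^{-1}d^{\tau_j(h)} > t_j \text{ for } j = 1, \dots, i \} = \{ H_h(t_j) < j \text{ for } j = 1, \dots, i \},
\end{eqnarray*}
so the joint law of $(h^{-1}d^{\tau_1(h)}, \dots, h^{-1}d^{\tau_i(h)})$ is determined by the finite-dimensional distributions of $H_h$. Writing ${\rm y}_1 \leq {\rm y}_2 \leq \cdots$ for the atoms, ranked increasingly, of a Poisson process on $(0, \infty)$ with intensity $c\Lambda_\rho^{\ast}$, Proposition \ref{Pro4} combined with this observation yields
\begin{eqnarray*}
\left( h^{-1}d^{\tau_1(h)}, \dots, h^{-1}d^{\tau_i(h)} \right) \overset{d}{\longrightarrow} ({\rm y}_1, \dots, {\rm y}_i), \hspace*{5mm} \text{as} \hspace*{2mm} h \rightarrow \infty.
\end{eqnarray*}

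The continuous mapping theorem (\cite[Theorem 2.7, Chapter 1]{Billi1999}) applied to the continuous map $(x_1, \dots, x_i) \mapsto (1/x_1, \dots, 1/x_i)$ on $(0,\infty)^i$ then gives
\begin{eqnarray*}
\left( hd^{-\tau_1(h)}, \dots, hd^{-\tau_i(h)} \right) \overset{d}{\longrightarrow} (1/{\rm y}_1, \dots, 1/{\rm y}_i), \hspace*{5mm} \text{as} \hspace*{2mm} h \rightarrow \infty.
\end{eqnarray*}
It remains to identify the right-hand side. By the mapping theorem for Poisson processes (\cite[Proposition 3.7, Chapter 3]{Res1987}), the image of the Poisson process with intensity $c \Lambda_\rho^{\ast}$ under $\varphi(x) = 1/x$ is a Poisson process on $(0,\infty)$ whose intensity is the pushforward of $c\Lambda_\rho^{\ast}$ by $\varphi$. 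Since $\varphi$ is an involution, this pushforward applied to a measurable $B \subset (0,\infty)$ equals $c\Lambda_\rho^{\ast}(\varphi^{-1}(B)) = c\Lambda_\rho^{\ast}(\varphi(B))$, which by the very definition $\Lambda_\rho^{\ast}(A) = \Lambda_\rho(\varphi^{-1}(A))$ is $c\Lambda_\rho(B)$. Hence $(1/{\rm y}_1, \dots, 1/{\rm y}_i)$ are, in decreasing order, the first $i$ atoms of a Poisson process on $(0, \infty)$ with intensity $c\Lambda_\rho({\rm d}x)$, which is exactly the conclusion of Corollary \ref{cor3}.

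There is no genuine obstacle here beyond Proposition \ref{Pro4}: the entire argument is a reciprocation combined with the standard Poisson mapping theorem. The only point requiring care is the bookkeeping of orderings (atoms of $H_h$ are ranked increasingly, so their reciprocals come out decreasingly, matching the indexing ${\rm x}_1 \geq {\rm x}_2 \geq \cdots$ in the statement) and the verification that the pushforward of $\Lambda_\rho^{\ast}$ by $\varphi$ recovers $\Lambda_\rho$, which is immediate from the definition of $\Lambda_\rho^{\ast}$.
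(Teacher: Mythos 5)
Your proof is correct and follows essentially the same route as the paper's: identify $h^{-1}d^{\tau_j(h)}$ as the ranked occurrence times of $H_h$, deduce joint convergence of the first $i$ occurrence times from the finite-dimensional convergence in Proposition~\ref{Pro4} via the equivalence $\{\text{$j$-th occurrence time}>t\}=\{H_h(t)<j\}$, then apply the continuous mapping theorem with $x\mapsto 1/x$ and the Poisson mapping theorem, checking that the pushforward of $c\Lambda_\rho^{\ast}$ under $\varphi$ is $c\Lambda_\rho$. You merely spell out the event identity and the pushforward verification that the paper leaves implicit (the event-identity argument is the same one the paper gives explicitly in Corollary~\ref{cor1}).
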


\begin{proof}
Note that the sequence $h^{-1}d^{\tau_{1}(h)}  \leq h^{-1}d^{\tau_{2}(h)}  \leq \cdots$ are the occurrence times of the counting process $(H_{h}(t), t \geq 0)$. Then Proposition \ref{Pro4} implies that  for every fixed $i \in \mathbb{N}$, 
\begin{eqnarray*}
( h^{-1}d^{\tau_{1}(h)}, \dots, h^{-1}d^{\tau_{i}(h)} ) \overset{d}{\longrightarrow} ({\rm y}_{1}, \dots, {\rm y}_{i}), \hspace*{5mm} \text{as} \hspace*{2mm} h \rightarrow \infty,
\end{eqnarray*}
\noindent where ${\rm y}_{1} \leq {\rm y}_{2} \leq \cdots$ are the occurrence times of the Poisson process $(H(t), t \geq 0)$. Therefore, our claim follows from the continuous mapping theorem (\cite[Theorem 2.7, Chapter 1]{Billi1999}) and basic properties of Poisson processes (\cite[Proposition 3.7, Chapter 3]{Res1987}).
\end{proof}

Let ${\bf e}_{i}(h)$ be the edge with the $i$-th smallest height that has been removed and ${\bf v}_{i}(h)$ the endpoint (vertex) of ${\bf e}_{i}(h)$ that is further away from the root of $T_{h}^{ \rm d}$. Let $T_{i}(h)$ be the subtree of $T_{h}^{ \rm d}$ that is rooted at ${\bf v}_{i}(h)$ and denote by $\tilde{G}_{i}$ the size (number of vertices) of the root-cluster of $T_{i}(h)$ after performing percolation (where here of course root means ${\bf v}_{i}(h)$). We also write $\tilde{G}_{i}^{\ast}$ for the size (number of vertices) of the largest cluster of $T_{h}(i)$ that does not contain the root. 

\begin{proposition} \label{Pro5}
Suppose that $\{ \log_{d}h \} \rightarrow \rho \in [0,1)$ as $h \rightarrow \infty$ and that $q_{h}$ fulfills (\ref{per}). For every fixed $i \in \mathbb{N}$, we have that
\begin{eqnarray*}
\tilde{G}_{k}^{\ast} = o_{\rm p}(h^{-1}d^{h}) \hspace*{5mm} \text{for} \hspace*{2mm} k =1, \dots, i.
\end{eqnarray*}
\noindent Furthermore, we have that
\begin{eqnarray*}
( d^{-h+\tau_{1}(h)} \tilde{G}_{1}, \dots, d^{-h+\tau_{i}(h)} \tilde{G}_{i} ) \overset{\mathbb{P}}{\longrightarrow} \frac{d}{d-1}(e^{-c}, \dots, e^{-c}), \hspace*{5mm} \text{as} \hspace*{2mm} h \rightarrow \infty.
\end{eqnarray*}
\end{proposition}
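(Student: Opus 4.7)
My plan is to treat Proposition \ref{Pro5} as the $d$-regular-tree analogue of Proposition \ref{Pro2} and apply to each subtree $T_k(h)$ the concentration results on complete $d$-ary trees recalled in the introduction from \cite[Section 3]{Be1}. Fix $k\in\{1,\dots,i\}$. Once we condition on the vertex $\mathbf{v}_k(h)$, the subtree $T_k(h)$ is deterministically a complete $d$-ary tree of height $h-\tau_k(h)$, and all its edges lie at heights strictly above $\tau_k(h)$; hence these edges are retained or deleted independently with probability $q_h$, independently of the edges that define $\mathbf{v}_k(h)$. In other words, percolation restricted to $T_k(h)$ is ordinary Bernoulli bond-percolation with parameter $q_h$. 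Corollary \ref{cor3} gives $\tau_k(h)=\log_d h+O_{\mathrm{p}}(1)$, hence $(h-\tau_k(h))/h\overset{\mathbb{P}}{\longrightarrow}1$ and
$$q_h \;=\; 1-\frac{c}{h}+o\!\left(\frac{1}{h}\right) \;=\; 1-\frac{c+o_{\mathrm{p}}(1)}{h-\tau_k(h)}+o_{\mathrm{p}}\!\left(\frac{1}{h-\tau_k(h)}\right),$$
which is again of the supercritical form (\ref{per}) for $T_k(h)$ with the same constant $c$.

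To obtain the convergence of $d^{-h+\tau_k(h)}\tilde{G}_k$, I would invoke, conditionally on $\tau_k(h)$, the root-cluster concentration for complete $d$-ary trees of height $h'$ with supercritical parameter $1-c/h'+o(1/h')$: the root cluster has size $(e^{-c}+o_{\mathrm{p}}(1))\,d^{h'+1}/(d-1)$. Applied to $T_k(h)$ with $h'=h-\tau_k(h)$, this yields
$$d^{-h+\tau_k(h)}\tilde{G}_k \overset{\mathbb{P}}{\longrightarrow} \frac{d\,e^{-c}}{d-1}.$$
Since convergence in probability to a constant is preserved under joint limits, this automatically upgrades to the joint statement $(d^{-h+\tau_1(h)}\tilde{G}_1,\dots,d^{-h+\tau_i(h)}\tilde{G}_i)\overset{\mathbb{P}}{\longrightarrow}\tfrac{d}{d-1}(e^{-c},\dots,e^{-c})$, proving the second assertion.

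For the bound $\tilde{G}_k^{\ast}=o_{\mathrm{p}}(h^{-1}d^{h})$, I would use the deterministic pigeonhole upper bound: letting $\tau_1^{(k)}(h)$ denote the smallest height, measured from $\mathbf{v}_k(h)$, at which an edge inside $T_k(h)$ is removed, every non-root cluster of $T_k(h)$ is contained in some subtree of height at most $h-\tau_k(h)-\tau_1^{(k)}(h)$, so
$$\tilde{G}_k^{\ast}\;\leq\; \frac{d^{\,h-\tau_k(h)-\tau_1^{(k)}(h)+1}}{d-1}.$$
Mirroring the explicit distribution $\mathbb{P}(\tau_1(h)>i)=q_h^{d(d^i-1)/(d-1)}$ recalled in the introduction (now for $T_k(h)$ with effective height $h-\tau_k(h)\to\infty$ in probability), a direct calculation shows that $\tau_1^{(k)}(h)-\log_d(h-\tau_k(h))$ is tight; in particular $d^{-\tau_1^{(k)}(h)}\overset{\mathbb{P}}{\longrightarrow}0$. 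Combined with $h\,d^{-\tau_k(h)}=O_{\mathrm{p}}(1)$ from Corollary \ref{cor3}, this gives
$$\frac{h\,\tilde{G}_k^{\ast}}{d^{h}} \;\leq\; \frac{d}{d-1}\bigl(h\,d^{-\tau_k(h)}\bigr)\, d^{-\tau_1^{(k)}(h)} \;=\; O_{\mathrm{p}}(1)\cdot o_{\mathrm{p}}(1)\;\overset{\mathbb{P}}{\longrightarrow}\;0.$$

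The main obstacle I anticipate is the rigorous use of the deterministic-height results for $T_k(h)$, whose height $h-\tau_k(h)$ is itself random. I would handle this by conditioning on $\tau_k(h)=j$: on that event the tree $T_k(h)$ and the percolation parameter $q_h$ are deterministic, and the cited theorems apply with deterministic height $h-j$ and effective constant $c(h-j)/h\to c$. The conditioning is then removed using the tightness of $\tau_k(h)-\log_d h$ supplied by Corollary \ref{cor3}, which guarantees both that the effective constant really converges to $c$ and that the $o_{\mathrm{p}}$ and $O_{\mathrm{p}}$ statements transfer from the conditional to the unconditional setting.
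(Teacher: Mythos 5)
Your proposal is correct, and the overall architecture mirrors the paper's: both proofs recognise that, conditionally on $\tau_{k}(h)$, the subtree $T_{k}(h)$ is a deterministic complete $d$-ary tree of height $h-\tau_{k}(h)$ whose edges undergo independent Bernoulli$(q_{h})$ bond-percolation; both use Corollary~\ref{cor3} to ensure $\tau_{k}(h)=o_{\mathrm p}(h)$ so that $q_{h}$ is still of the supercritical form (\ref{per}) with the same constant $c$ for this random effective height; and both upgrade coordinate-wise convergence in probability to the joint statement because the limits are constants. Where you genuinely diverge from the paper is in how the two conclusions are then extracted. The paper invokes \cite[Theorem 1 and Proposition 1]{Be1} as a black box, after noting that the hypotheses $(H_{k})$ and $(H_{k}')$ of that paper hold for the complete $d$-ary tree with $\ell(n)=\ln n$ and $\xi_{k}\equiv 1/\ln d$; this simultaneously delivers both the root-cluster concentration and the $o_{\mathrm p}(h^{-1}d^{h})$ bound on the largest non-root cluster of $T_{k}(h)$. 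You instead treat the two assertions separately: for $\tilde{G}_{k}$ you apply the explicit root-cluster law already stated for complete $d$-ary trees (the first display in Theorem~\ref{Teo3}), and for $\tilde{G}_{k}^{\ast}$ you give a self-contained pigeonhole bound — every non-root cluster of $T_{k}(h)$ sits inside a subtree rooted at the child endpoint of a removed edge, whose height is at most $h-\tau_{k}(h)-\tau_{1}^{(k)}(h)$, and the explicit distribution of the first removed height forces $d^{-\tau_{1}^{(k)}(h)}\to 0$ in probability. Your route is more elementary and does not require the reader to check Bertoin's abstract hypotheses, at the cost of repeating (in the special case of $d$-ary trees) what \cite{Be1}'s Proposition 1 encapsulates; the paper's route is terser but opaque without \cite{Be1} in hand. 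One small point you handle correctly but should keep explicit in a write-up: the percolation parameter on $T_{k}(h)$ remains $q_{h}$, not $q_{h-\tau_{k}(h)}$, so the effective constant is $c(h-\tau_{k}(h))/h$, and it is precisely the tightness of $\tau_{k}(h)-\log_{d}h$ from Corollary~\ref{cor3} that lets you replace this by $c$ after removing the conditioning on $\tau_{k}(h)$.
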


\begin{proof}
Note that $T_{h}(i)$ is a $d$-ary tree of height $h - \tau_{i}(h)$ with  $n_{i}(h) = (d^{h - \tau_{i}(h)+1}-1)/(d-1)$ vertices. Note also that Corollary \ref{cor3} implies that $\tau_{i}(h)/h \overset{\mathbb{P}}{\longrightarrow} 0$, and then $h - \tau_{i}(h) \overset{\mathbb{P}}{\longrightarrow} \infty$, as $h \rightarrow \infty$. Then, one sees that the percolation parameter $q_{h}$ in (\ref{per}) corresponds precisely to the supercritical regime in $T_{i}(h)$. Therefore, our claim follows from \cite[Theorem 1 and Proposition 1]{Be1} by verifying that the hypotheses ($H_{k}$) and ($H_{k}^{\prime}$) there hold for every $k \in \mathbb{N}$ with $\ell(n_{i}(h)) = \ln (n(h))$ and $\xi_{k} \equiv 1/ \ln d$. 
\end{proof}

We now have all the ingredients to prove Theorem \ref{Teo3}. As in the proof of Theorem \ref{Teo2}, we only provide enough details to convince the reader that everything can be carried out as in the proof of Theorem \ref{Teo1}.

\begin{proof}[Proof of Theorem \ref{Teo3}]
The first claim has been shown in \cite[Section 3]{Be1}, and thus, we only prove the second one. The same argument as in the proof of Corollary \ref{cor2} shows, by using Corollary \ref{cor3} and Proposition \ref{Pro5}, that for every fixed $i \in \mathbb{N}$,
\begin{eqnarray*}
( hd^{-h}\tilde{G}_{1}, \dots, hd^{-h}\tilde{G}_{i} ) \overset{d}{\longrightarrow} ({\rm x}_{1}, \dots, {\rm x}_{i}), \hspace*{5mm} \text{as} \hspace*{2mm} h \rightarrow \infty,
\end{eqnarray*}
\noindent where ${\rm x}_{1} \geq {\rm x}_{2} \geq \cdots$ are the atoms of a Poisson process on $(0, \infty)$ with intensity $c \frac{d}{d-1}e^{-c}  \Lambda_{\rho}({\rm d} x)$. 

For $\ell \in  \mathbb{N}$, denote by $ \bar{G}_{1, \ell} \geq \bar{G}_{2, \ell} \geq \cdots \geq \bar{G}_{\ell, \ell} $ the rearrangement in decreasing order of the $\bar{G}_{i}$ for $i =1, \dots, \ell$. As in the proof of Lemma \ref{lemma2}, one can show that for every fixed $i \in \mathbb{N}$,
\begin{eqnarray*}
\lim_{\ell \rightarrow \infty} \liminf_{h \rightarrow \infty} \mathbb{P} \left(\tilde{G}_{k, \ell}= G_{k} \hspace*{3mm} \text{for every} \hspace*{2mm} k =1, \dots, i    \right) = 1;
\end{eqnarray*}
\noindent  one only needs to note that $\Lambda_{\rho}([x, \infty)) \asymp x^{-1}$, for $x > 0$, and thus, a Poisson process on $(0,\infty)$ with intensity $c \frac{d}{d-1}e^{-c}  \Lambda_{\rho}({\rm d} x)$ has infinitely many atoms\footnote{For every pair of functions $f,g >0$, we write $f \asymp g$ if there exists a positive real number $c$ such that $cf(x) \leq g(x) \leq f(x)/c$ for all $x$.}. 

Finally, a combination of the previous two facts conclude with the proof of Theorem \ref{Teo3}.
\end{proof}

\paragraph{Acknowledgements.}
This work is supported by the Knut and Alice Wallenberg
Foundation, a grant from the Swedish Research Council and The Swedish Foundations' starting grant from Ragnar S\"oderbergs Foundation.


\providecommand{\bysame}{\leavevmode\hbox to3em{\hrulefill}\thinspace}
\providecommand{\MR}{\relax\ifhmode\unskip\space\fi MR }
\providecommand{\MRhref}[2]{%
  \href{http://www.ams.org/mathscinet-getitem?mr=#1}{#2}
}
\providecommand{\href}[2]{#2}

\end{document}